\renewcommand{\-}{\hbox{-}}
\newcommand{\Mod}{\operatorname{\bf Mod}\nolimits}
\newcommand{\bfmod}{\operatorname{\bf mod}\nolimits}
\newcommand{\seq}{\operatorname{\bf seq}\nolimits}
\newcommand{\tsseq}{\operatorname{{\mathcal{TS}}-\seq}\nolimits}
\newcommand{\ts}{\operatorname{{\mathcal{TS}}}\nolimits}
\newcommand{\Ad}{\operatorname{{\mathcal Ad}}\nolimits}
\newcommand{\hatR}{\operatorname{{\widehat R}}\nolimits}
\newcommand{\Hom}{\operatorname{Hom}\nolimits}
\newcommand{\bhom}{\operatorname{hom}\nolimits}
\newcommand{\PHom}{\operatorname{PHom}\nolimits}
\newcommand{\Id}{\operatorname{Id}\nolimits}
\newcommand{\Ker}{\operatorname{Ker}\nolimits}
\newcommand{\Ind}{\operatorname{Ind}\nolimits}
\newcommand{\Res}{\operatorname{Res}\nolimits}
\newcommand{\Tr}{\operatorname{Tr}\nolimits}
\def\CA{{\mathcal{A}}}
\def\CC{{\mathcal{C}}}
\def\CG{{\mathcal{G}}}
\def\CD{{\mathcal{D}}}
\def\CI{{\mathcal{I}}}
\def\CL{{\mathcal{L}}}
\def\thyph{{\text{-}}}
\newcommand{\CE}{\operatorname{\mathcal E}\nolimits}
\newcommand{\CF}{\operatorname{\mathcal F}\nolimits}
\newcommand{\CH}{\operatorname{\mathcal H}\nolimits}
\newcommand{\CK}{\operatorname{\mathcal K}\nolimits}
\newcommand{\bI}{\operatorname{\mathbb I}\nolimits}
\newcommand{\bR}{\operatorname{\mathbb R}\nolimits}
\newcommand{\boK}{\operatorname{\bf K}\nolimits}
\newcommand{\CM}{\operatorname{\mathcal M}\nolimits}
\newcommand{\CP}{\operatorname{\mathcal P}\nolimits}
\newcommand{\CR}{\operatorname{\mathfrak{R}}\nolimits}
\newcommand{\CS}{\operatorname{\mathcal S}\nolimits}
\newcommand{\CT}{\operatorname{\mathcal T}\nolimits}
\newcommand{\CU}{\operatorname{\mathcal U}\nolimits}
\newcommand{\CV}{\operatorname{\mathcal V}\nolimits}
\newcommand{\CX}{\operatorname{\mathcal X}\nolimits}
\newcommand{\CY}{\operatorname{\mathcal Y}\nolimits}
\newcommand{\kk}{\operatorname{\mathbb k}\nolimits}
\newcommand{\SE}{\operatorname{\mathscr E}\nolimits}
\newcommand{\bC}{\operatorname{\bf C}\nolimits}
\newcommand{\bA}{\operatorname{\bf A}\nolimits}
\newcommand{\Cpx}{\operatorname{\bf Cpx}\nolimits}
\newcommand{\Cpxb}{\operatorname{\Cpx^b}\nolimits}
\newcommand{\Cpxp}{\operatorname{\Cpx^+}\nolimits}
\newcommand{\Cpxm}{\operatorname{\Cpx^-}\nolimits}
\newcommand{\cpx}{\operatorname{\bf cpx}\nolimits}
\newcommand{\cpxb}{\operatorname{\cpx^b}\nolimits}
\newcommand{\cpxp}{\operatorname{\cpx^+}\nolimits}
\newcommand{\cpxm}{\operatorname{\cpx^-}\nolimits}
\newcommand{\cpxFL}{\operatorname{\bf cpxFL}\nolimits}
\newcommand{\cpxbFL}{\operatorname{\cpxFL^b}\nolimits}
\newcommand{\cpxpFL}{\operatorname{\cpxFL^+}\nolimits}
\newcommand{\cpxmFL}{\operatorname{\cpxFL^-}\nolimits}
\newcommand{\FA}{\operatorname{\mathfrak A}\nolimits}
\newcommand{\FX}{\operatorname{\mathfrak X}\nolimits}
\newcommand{\FY}{\operatorname{\mathfrak Y}\nolimits}
\newcommand{\FU}{\operatorname{\mathfrak U}\nolimits}
\newcommand{\FD}{\operatorname{\mathfrak P}\nolimits}
\newcommand{\FP}{\operatorname{\mathfrak P}\nolimits}
\newcommand{\Proj}{\operatorname{\bf Proj}\nolimits}
\newcommand{\proj}{\operatorname{proj}\nolimits}
\newcommand{\splt}{\operatorname{{\mathcal Splt}}\nolimits}
\newcommand{\Vpre}{\operatorname{{V\thyph}}\nolimits}
\newcommand{\Vsplt}{\operatorname{V\thyph\splt}\nolimits}
\newcommand{\Vts}{\operatorname{\Vpre\ts}\nolimits}
\newtheorem{lemma}{Lemma}[section]
\newtheorem{prop}[lemma]{Proposition}
\newtheorem{corollary}[lemma]{Corollary}
\newtheorem{theorem}[lemma]{Theorem}
\newtheorem{remark}[lemma]{Remark}
\newtheorem{definition}[lemma]{Definition}
\newtheorem{notation}[lemma]{Notation}
\date{\today}
\begin{document}

\thispagestyle{empty}
\title[the Green correspondence for
complexes]{Relatively projectivity and the Green correspondence for
complexes}
\author{Jon F Carlson}
\address{Department of Mathematics, University of Georgia,
Athens, GA 30602 USA}
\email{jfc@math.uga.edu}
\thanks{The first author was partially supported by
Simons Foundation Grant 054813-01.}
\author{Lizhong Wang}
\address{School of Mathematics,
Peking University, Beijing 100871, P.R.China}
\email{lwang@math.pku.edu.cn}
\author{Jiping Zhang}
\address{School of Mathematics,
Peking University, Beijing 100871, P.R.China}
\email{jzhang@pku.edu.cn}
\thanks{The second and third authors were supported by NSFC(11631001, 11871083)}
\keywords{ derived category, homotopy category, Green correspondence,
finite group, representation, complex}
\subjclass{20C20}

\begin{abstract}
We investigate a version of the Green correspondence for categories of
complexes, including homotopy categories and derived categories. The
correspondence is an equivalence between a category defined over a finite
group $G$ and the same for a subgroup $H$, often the normalizer of a
$p$-subgroup of $G$. We present a basic formula for deciding when categories
of modules or complexes have a Green correspondence and apply it to many
examples.  In several cases the equivalence is
an equivalence of triangulated categories, and in special cases it is an
equivalence of tensor triangulated categories.
\end{abstract}

\maketitle


\section{Introduction} \label{sec:intro}
The classical Green correspondence was defined by J. A. Green
\cite{green1} more than half a century ago.  It was one of several papers
by Green that changed the face of modular representation theory, with
an emphasis more on modules and maps rather than characters.
The correspondence expressed a relationship
given by induction and restriction, between
relative categories of modules of a finite group $G$ and
a subgroup $H$, where $H$ is usually taken as the normalizer
of the vertex of some module of interest. Green originally
stated it in terms of the representation rings
of the groups, {\it i. e.} as a correspondence of objects, with
little regards for the maps. In a later paper \cite{green2}, he
described it in terms of an equivalence of categories, with induction
and restriction being functors. Still, he
assumes that the modules are finitely generated and
the coefficient ring $k$ is either a field of
characteristic $p>0$ or a complete DVR whose residue ring is a
field of characteristic $p$. Basically, the assumption assures that
the categories have a Krull-Schmidt Theorem.
Many other extensions such as \cite{auslander-kleiner} \cite{harris},
to name just a couple,
also rely largely on the Krull-Schmidt property.
Later, a sweeping
generalization by Benson and Wheeler \cite{benson-wheeler},
proved equivalences of categories not only for infinitely generated
modules, but they also allowed the coefficients to be from any
commutative ring $k$ in which the order of the group is not invertible.

In this paper we carry the study a step further looking at a
variation on the Green correspondence for categories of complexes,
including homotopy categories and derived categories. We build somewhat
on the work \cite{wang-zhang} of the second and third authors.
The main issue is that we generalize the Benson-Wheeler
proof \cite{benson-wheeler} thereby providing axioms insuring
that induction and restriction give categorical equivalences.
The main theorem is presented in Section \ref{sec:functor}.
The primary difficulty in applying the axioms is
to show that  the categories
under consideration have the idempotent
completion property, {\it i. e.} they are Karoubian. This property is
the substitute for the Krull-Schmidt property, which is lacking in many
of the categories that we consider. Generally, the property holds
whenever a triangulated category has infinite direct sums.

The earlier sections of the paper are concerned with some explanation of
the numerous categories that we consider. In Section 2, we recall the
notion of an exact category and state some preliminaries. A main interest
is the quotient categories of complexes defined by relative projectivity,
relative to a $kG$-module. Included are categories of complexes of
$kG$-modules, and those complexes bounded or bounded above or below,
homotopy categories defined by term split sequences, or relative term
split sequences or relatively split sequences, sequences that split on
tensoring with a specific module. A primary goal
in Section 3 is
to determine the projective objects associated to the exact category
and to show that these are Frobenius categories.

In Section 4, we address the issue of idempotent completions. The
categories of complexes and their homotopy categories from the
previous sections are shown to be idempotent complete by usual
methods provided the coefficients are either Artinian or
infinitely generated modules are allowed. For complexes of
modules of finite composition length, it is proved that the
associated quotient categories defined by relative projective
objects are idempotent complete.

Section \ref{sec:acyclic} introduces the subcategories of acyclic
complexes, and the associated derived categories. Among other things,
it is shown that certain subcategories of acyclic objects defined by
relative projectivity are thick subcategories.

The main theorem on equivalences is Theorem \ref{thm:functorial}.
In Section \ref{sec:relprojtheory}, we recall
some of the standard constructions for
group representations, such as Frobenius Reciprocity and the Mackey
Theorem and show that these also hold for the categories of complexes
that we consider. This demonstrates that the categories satisfy
many of the conditions of the hypothesis of Theorem \ref{thm:functorial}.
The actual application of Theorem \ref{thm:functorial} takes place
in Section \ref{sec:applications}. 
The classical Green correspondence is extended to
the categories of complexes and their associated homotopy categories.
For derived categories, it is necessary to add an additional assumption.

In some cases, the equivalences associated to the
Green correspondence are equivalences of triangulated categories.
For example, suppose that $B$ is a
block of $kG$ with defect group $P$ and $b$ is its Brauer correspondent,
a block of $kH$ where $H$ is some subgroup that contains $N_G(P)$.
Then there is a triangulated  equivalence 
between the quotient category of homotopy
classes of bounded complexes of $B$-modules modulo 
$\FX$-projective complexes and
the same for $b$-modules modulo $\FX$-projectives complexes in $b$. 
Here $\FX$ is the collection of proper intersections 
of $P$ with its conjugates by elements not in $H$. See
Section \ref{sec:tri-equiv} for precise  details.
While the results
are mostly for the homotopy categories and derived categories of blocks,
they apply also to the stable category of modules. If $\FP$ is
the set of all proper intersections of a Sylow $p$-subgroup of $G$, then
the Green correspondence as above for $kG$-modules is an equivalence
of tensor triangulated categories.

For notation in this paper, let $G$ be a finite group
 and let $k$ be a commutative ring. Let $\Mod(kG)$
denote the category of all $kG$-modules and let $\bfmod(kG)$ be the
category of finitely generated $kG$-modules. These are tensor categories
in that, given two objects $M$ and $N$, there is tensor product
which is also an object in the category.
The $G$-action on  $M \otimes_k N$ is given by the diagonal
$g \mapsto g \otimes g$.
Throughout the paper, the
symbol $\otimes$ means $\otimes_k$ unless otherwise indicated.

In the first five sections of the paper, it seems helpful to make a clear
distinction between modules and complexes. So a complex $X^*$ is 
marked with the superscript ``*'', standing in place of a specific 
degree. This convention is relaxed in later sections where the 
notation presents other difficulties. 

We thank Dave Benson, Henning Krause and Paul Balmer and Jeremy Rickard
for helpful conversations. The first
author wishes to thank Peking University, Shangxi Normal University and
Southern University of Science and Technology for
their hospitality during visits
when much of this project was discussed.


\section{Basics on categories}   \label{sec:basiccats}
In this section we review a few basic categorical constructions that
are needed for the results of this paper. Most of this material is aimed
at stating and proving facts concerned with modules over group algebras,
and for this reason we make little attempt at great generality. For 
background information see the papers \cite{keller, keller2} or the 
books \cite{neeman, happel}.

A category is $k$-linear if all of its hom sets are $k$-modules and
composition of morphisms is $k$-linear. It implies that there is a forgetful
functor to the category of $k$-modules.
A $k$-linear category $\CC$ is hom-finite provided for
any two objects $M$ and $N$, $\Hom_{\CC}(M,N)$ has finite composition
length.

An additive category $\CC$ is a Krull-Schmidt
category if the objects satisfy a
Krull-Schmidt theorem, namely every object has a decomposition as a
direct sum of a finite number of indecomposable objects
and this decomposition
is unique up to ordering of the summands and isomorphisms of the
summands. This is equivalent to the condition that the endomorphism
ring of an indecomposable object in $\CC$ be a local ring.

Let $\Cpx(kG)$ be the category of complexes of $kG$-modules and
chain maps. Thus an object $X^*$ in $\Cpx(kG)$ is a complex
\[
\xymatrix{
\dots \ar[r] & X^n \ar[r]^{\partial_n} & X^{n+1} \ar[r]^{\partial_{n+1}}
& X^{n+2} \ar[r]^{\partial_{n+2}} & \dots
}
\]
of $kG$-modules and $kG$-module homomorphisms. For complexes $X^*$ and 
$Y^*$, a chain map $\mu: X^* \to Y^*$
is a sequence of maps $\{\mu_n: X^n \to Y^n\}$ such that $\partial^Y_n \mu_n
= \mu_{n+1}\partial^X_n$.

Let $\Cpxp(kG)$, $\Cpxm(kG)$ and $\Cpxb(kG)$ denote the full subcategories
of $\Cpx(kG)$ consisting of complexes that are bounded (respectively) above,
below or both above and below. All of these categories are $k$-linear.
Let $\cpx(kG) = \Cpx(\bfmod(kG))$ be the category of complexes of finitely
generated $kG$-modules, and let $\cpxp(kG)$, $\cpxm(kG)$ and $\cpxb(kG)$
be the bounded versions. Again these are all $k$-linear.
Note that if $k$ is a field, then also
these are tensor categories, except for $\cpx(kG)$.
The latter suffers from the fact that the tensor product of two complexes,
which are unbounded in both directions may not have finitely generated
terms even when the two complexes have finitely generated terms.

\begin{prop}\label{thm:KS1}
Suppose that $k$ is a field. The category $\cpxb(kG)$
of complexes of finitely generated $kG$-modules is a $k$-linear,
hom-finite, Krull-Schmidt category.
\end{prop}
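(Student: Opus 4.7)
The plan is to verify the three asserted properties in order, with all the real content living in the final one. The $k$-linearity of $\cpxb(kG)$ is inherited directly from that of $\bfmod(kG)$: the hom set between any two complexes is a $k$-submodule of a product of $k$-modules $\Hom_{kG}(X^n,Y^n)$, and composition is $k$-bilinear. I would dispose of this in a sentence.

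For hom-finiteness, let $X^*, Y^* \in \cpxb(kG)$. A chain map $\mu:X^*\to Y^*$ is determined by its components $\mu_n \in \Hom_{kG}(X^n,Y^n)$, of which only finitely many can be nonzero by boundedness. Since $k$ is a field and $kG$ is a finite-dimensional $k$-algebra, each finitely generated $kG$-module is a finite-dimensional $k$-vector space, so each $\Hom_{kG}(X^n,Y^n)$ is finite-dimensional over $k$. Hence $\Hom_{\cpxb(kG)}(X^*,Y^*)$ embeds as a $k$-subspace of a finite product of finite-dimensional spaces and so is itself of finite $k$-dimension.

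The Krull--Schmidt property is the only step with real content. I would first observe that, for any $X^*$ in $\cpxb(kG)$, the endomorphism ring $\End(X^*)$ is a finite-dimensional $k$-algebra by the hom-finiteness just established, and so is in particular artinian. Next I would check that idempotents split in $\cpxb(kG)$: given an idempotent chain map $e:X^*\to X^*$, the termwise decompositions $X^n = e_n X^n \oplus (1-e_n)X^n$ are stable under the differentials because $e$ commutes with them, so $X^* \cong \Im(e) \oplus \Ker(e)$ in $\cpxb(kG)$. Combining these two facts, any orthogonal decomposition of $1_{X^*}$ into idempotents gives a direct sum decomposition of $X^*$, and by artinianity of $\End(X^*)$ such decompositions have bounded length. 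Thus $X^* \cong X_1^* \oplus \cdots \oplus X_r^*$ with each $\End(X_i^*)$ an artinian ring with no nontrivial idempotents, hence local; this shows each $X_i^*$ is indecomposable with local endomorphism ring. Uniqueness of this decomposition up to order and isomorphism then follows from the classical Krull--Schmidt (Azumaya) theorem for additive categories whose indecomposables have local endomorphism rings.

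The main potential obstacle is the splitting of idempotents, since that is the one step where one genuinely uses the termwise structure of complexes rather than abstract nonsense; everything else is formal consequence of ``finite-dimensional $\End$ plus idempotent-complete.'' Once that observation is in hand, the argument is a standard reduction to Azumaya's theorem.
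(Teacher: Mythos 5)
Your proof is correct, and it takes a genuinely different route from the paper. The paper disposes of all three properties by noting $k$-linearity and hom-finiteness are ``clear from the definition'' and then, for Krull--Schmidt, citing Atiyah's theorem that an abelian category satisfying a certain ``bichain condition'' is Krull--Schmidt, together with the easy observation that hom-finiteness implies the bichain condition. You instead give a self-contained argument built from two ingredients: (i) idempotents split in $\cpxb(kG)$ because it is abelian (you make this explicit using the termwise structure, though it is automatic for any abelian category), and (ii) $\End(X^*)$ is a finite-dimensional $k$-algebra, so orthogonal decompositions of $1$ have bounded length and a finite-dimensional $k$-algebra with no nontrivial idempotents is local (idempotents lift modulo the nilpotent Jacobson radical, so $R/J(R)$ is a division ring). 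With every indecomposable having a local endomorphism ring, Azumaya gives uniqueness. Your approach buys independence from Atiyah's paper and makes all the hypotheses visible; the paper's citation is shorter but opaque about what ``bichain condition'' means. Both are standard and correct, and both ultimately exploit hom-finiteness in the same essential way.
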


\begin{proof}
That $\cpxb(kG)$ is $k$-linear and hom-finite is clear from the
definition. The fact that it is a Krull-Schmidt category follows from
a result of Atiyah \cite{atiyah} which states that any
abelian category satisfying a certain "bichain condition"
has the Krull-Schmidt property. However, it is easy to see that any
hom-finite category satisfies the condition.
\end{proof}

The notion of an exact category was first
introduced by Quillen in \cite{quillen},
and has been extensively developed since then.
Start with an additive categrory $\CC$. Let $\SE$ be a collection of
exact sequences of objects and maps in $\CC$. We assume that $\SE$
satisfies certain axioms. Among these are statements such as that
any exact sequence isomorphic to an element of $\SE$ is in $\SE$.
The first maps in the exact sequences are called admissible monomorphisms,
while the second maps are admissible epimorphisms. The composition of two
admissible monomorphisms is an admissible monomorphism, and similarly
for admissible epimorphisms. Also, admissible monomorphisms are
preserved by arbitrary pushouts while admissible epimorphisms are
preserved by arbitrary pull backs. See \cite{keller2} for precise details.

If $\SE$ is a collection of exact sequences in $\CC$ as above, then the
pair ($\CC$, $\SE$) is called an exact category.
An object $X\in \CC$ is called $\mathscr{E}$-projective
if for each exact sequence
\[
\xymatrix{
0 \ar[r] & A'\ar[r] &  A \ar[r] & A''\ar[r] & 0
}
\]
in $\mathscr{E}$, the sequence of groups
\[
\xymatrix{
0\ar[r]& \Hom(X,A')\ar[r]& \Hom(X,A)\ar[r]& \Hom(X,A'')\ar[r]&0
}
\] is exact. The notion of $\mathscr{E}$-injective is defined dually.

An exact category ($\CC$, $\SE$) has enough injectives,
if for each $A\in \CC$, there is an admissible monomorphism
$A \to I$ where $I$ is injective.
Also if for each $A\in \CC$, there is an admissible epimorphism
$P \to A$, where $P$ is $\mathscr{E}$-projective, then
we say that ($\CC$, $\SE$) has enough projectives. In such a category, we
denote the subcategory of projectives by $\SE$-$\Proj$.

An exact category ($\CC$, $\SE$) is a Frobenius category if
it has enough projectives and injectives and if the collections of
projective and injective objects coincide.

As an example, we recall that, if $k$ is a field, then
the projective objects and injective objects in $\Mod(kG)$ and in $\bfmod(kG)$
coincide. Then the category $\Cpx(kG)$, where we assume that
the collection of sequence $\SE$ to be all exact sequences,
is a Frobenius category. The
projectives in this category are the complexes of
projective modules that are both split and exact.
These are complexes $X^*$ which are direct sums of
two term exact complexes having the form $0 \to P \to P \to 0$ where $P$
is a projective $kG$-module and nonzero terms occur in degree $n$ and
$n+1$ for some $n$. That is, the $\SE$-projective objects are the
null-homotopic complexes of projective modules.
It is easy to see that these complexes are also
injective.

Suppose that ($\CC$, $\SE$) is a Frobenius category.
Associated to ($\CC$, $\SE$), there is a stable
category or quotient category which we denote $\CD = {\CC}/({\SE}$-$\Proj$).
The objects in the category coincide with the objects in $\CC$. For
$X$ and $Y$ objects in $\CC$ the morphisms are given by
\[
\Hom_{\CD}(X,Y) = \frac{\Hom_{\CC}(X,Y)}{\PHom_{\CC}^{\SE}(X,Y)}
\]
where $\PHom_{\CC}^{\SE}$ are the homomorphisms that factor through an
$\SE$-projective module. This is a triangulated
category, the triangles corresponding to sequences in $\SE$.
The method is briefly described as follows. See \cite{happel} for
more details.

If $M$ is an object in $\CC$, there is a sequence in $\SE$
\[
\xymatrix{
0 \ar[r] & M \ar[r] &  I_{\SE} \ar[r] & \Omega^{-1}_{\SE}(M) \ar[r] & 0
}
\]
where $I_{\SE} = I_{\SE}(M)$ is an $\SE$-injective object that
serves the purpose of a relative injective hull.
It defines (up to isomorphism in $\CD$) the object $\Omega^{-1}_{\SE}(M)$.
The operator $\Omega^{-1}_{\SE}$ is a functor on the stable
category $\CD$.  Then a given morphism, $\varphi: M \to N$, is fit
into a triangle by means of the pushout diagram
\[
\xymatrix{
0 \ar[r] & M \ar[r] \ar[d]^\varphi & I \ar[r] \ar[d] &
\Omega^{-1}_{\SE}(M) \ar[r] \ar@{=}[d] & 0 \\
0 \ar[r] & N \ar[r]^\beta & U \ar[r]^{\gamma \quad} &
\Omega^{-1}_{\SE}(M) \ar[r] & 0
}
\]
Here $I$ is the $\SE$-injective hull of $M$ as above, and $U$ is
the pushout in the left square. Then a triangle containing the
class of $\varphi$
is given as
\[
\xymatrix{
M \ar[r]^\varphi & N \ar[r]^\beta & U \ar[r]^\gamma & \Omega^{-1}_{\SE}(M)
}
\]

An important example is the collection $\CT = \tsseq(\Cpx(kG))$
of term split sequences of
complexes in $\Cpx(kG)$ \cite{grime}. 
A sequence $0 \to A^* \to B^* \to C^* \to 0$ of
objects in $\Cpxb(kG)$ is term split if for every degree $d$ the sequence
of terms $0 \to A^d \to B^d \to C^d \to 0$ is a split sequence of
$kG$-modules. The $\tsseq$-projective objects in the exact category
$(\Cpx(kG), \CT)$ are the split exact complexes. A complex $X^*$ is
split exact if for every $d$ there is a map $s_d: X^{d+1} \to X^d$
such that $\partial_{d-1}s_{d-1} + s_d\partial_d:X^d \to X^d$ is the
identity homomorphism. Such a complex is a direct sum of two-term complexes
$0 \to X \to X \to 0$ where the nontrivial map
is the identity. In other words, $\CT$-$\Proj$ is the collection
of complexes that are homotopic to the zero complex.

Let $\CK(kG) = \CK(\Cpx(kG))$,
denote the homotopy category of complexes of $kG$-modules
and homotopy classes of chain maps.  It is a triangulated category
and its translation functor
$\Omega^{-1}_{\tsseq}$ is the shift functor that takes $X^*$ to
$X[1]^*$ where $X[1]^n = X^{n+1}$ for all $n$ and the boundary maps
are all multiplied by $-1$. If $f: Y^* \to X^*$
is a chain map then the third object in the triangle of $f$ is
isomorphic to the usual mapping cone $M(f)$. Here $M(f)^n =
X^n \oplus Y^{n+1}$ and the boundary map $M(f)^n \to M(f)^{n+1}$ is
given by $\partial(x,y) = (\partial(x)+f(y), -\partial(y))$.
We let $\CK^*(kG) = \CK(\Cpx^*(kG))$ for $* = +, -$ or
$b$ be the full subcategory of
$\CK(kG)$ with objects in $\Cpx^*(kG)$.

The following is well known.

\begin{prop} \label{prop:homotop}
The quotient category $\Cpx(kG)/\CT$-$\Proj = \CK(kG)$,
by the projectives of the set of term split sequences,
$\CT = \tsseq(\Cpx(kG))$, is the homotopy category
$\CK(kG)$ of complexes of $kG$-modules and homotopy classes of
maps. Likewise, for $* = +, -$ or $b$,
$\CK(\Cpx^*(kG)) = \Cpx^*(kG)/\tsseq(\Cpx^*(kG))$-$\Proj$.
We similarly denote the homotopy categories of complexes of
finitely generated
modules $\CK(\cpx(kG))$ and $\CK(\cpx^*(kG)).$
These are tensor triangulated category (except for
$\CK(\cpx(kG))$ which has no tensor). The exact categories
$(\Cpx^*(kG), \CT)$ and $(\cpx^*(kG), \tsseq(\cpx^*(kG)))$ are
Frobenius categories.
\end{prop}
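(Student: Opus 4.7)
The plan is to identify the $\tsseq$-projective objects in $(\Cpx(kG),\CT)$ with the null-homotopic complexes and then show that the quotient by morphisms factoring through such objects is exactly the homotopy category $\CK(kG)$.

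The preceding discussion already identifies the $\tsseq$-projectives: they are precisely the split exact complexes, which are direct sums of two-term complexes of the form $0\to M\to M\to 0$ with identity differential. I would then verify the key equivalence: a chain map $f\colon X^*\to Y^*$ factors through some $\tsseq$-projective complex if and only if $f$ is null-homotopic. For the ``if'' direction, given a homotopy $s_n\colon X^n\to Y^{n-1}$ of $f$, form the mapping cylinder $Z^n = X^n\oplus X^{n+1}$ with differential $\partial(x,y)=(\partial^X x+y,-\partial^X y)$, which is manifestly split exact, and factor $f$ as $X^*\to Z^*\to Y^*$ using $x\mapsto (x,-\partial^X x)$ followed by $(x,y)\mapsto f(x)-s_{n+1}(y)$ in degree $n$. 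For the ``only if'' direction, any chain map landing in (or emanating from) a direct sum of complexes $0\to M\to M\to 0$ is seen to be null-homotopic by pulling back/pushing forward the identity splitting to produce a contracting homotopy of the composite. This identifies the quotient $\Hom_\CC(X^*,Y^*)/\PHom_\CC^{\tsseq}(X^*,Y^*)$ with the set of homotopy classes of chain maps, as required.

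Next I would verify that $(\Cpx(kG),\CT)$ is a Frobenius category, and the same for each of the bounded and finitely generated variants. The null-homotopic complexes are $\tsseq$-injective by the dual of the argument above (the splitting of $0\to M\to M\to 0$ works in both directions), so $\tsseq$-projectives and $\tsseq$-injectives coincide. For enough projectives, given $X^*$ the cylinder $Z(X^*)$ above gives a term-split admissible epimorphism $Z(X^*)\to X^*$ with $Z(X^*)$ null-homotopic; dually one obtains an admissible monomorphism $X^*\hookrightarrow Z'(X^*)$ into a null-homotopic complex. The cylinder construction preserves boundedness on either side and preserves the property of having finitely generated terms, so the same argument works verbatim in $\Cpxb, \Cpxp, \Cpxm$ and in all the $\cpx^*$-variants.

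For the tensor-triangulated structure, note that tensoring a term-split short exact sequence degreewise with any complex $W^*$ over $k$ preserves the splittings, so the total tensor product descends to the quotient and makes $\CK(\Cpx^*(kG))$ and $\CK(\cpx^b(kG))$ into tensor triangulated categories compatibly with the triangles coming from mapping cones. The exclusion of $\CK(\cpx(kG))$ is simply the observation that the total tensor product of two unbounded complexes of finitely generated modules involves infinite direct sums in each degree, so the category is not closed under $\otimes$. I do not expect any step to pose a genuine obstacle; the only real care is in bookkeeping the sign conventions in the cylinder construction and checking in each variant that the cylinder stays in the subcategory under consideration.
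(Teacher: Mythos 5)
Your overall strategy is the one the paper intends: the paper itself declares the identification of the quotient with the homotopy category to be ``standard'' and ``well known'' and devotes its proof to showing that the class $\CT$ of term-split sequences is closed under arbitrary tensor products (the point you also make), plus the observation that $\CK(\cpx(kG))$ carries no tensor. You supply the standard arguments that the paper omits: identify the $\tsseq$-projectives with the contractible (split exact) complexes (already noted in the paper just before the proposition), show that a chain map factors through such a complex iff it is null-homotopic, verify the Frobenius property, and observe that all constructions stay in the relevant subcategories. That is the right plan.

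However, the explicit factorization you write down does not verify. With the differential $\partial^Z(x,y)=(\partial^X x + y,\ -\partial^X y)$ on $Z^n = X^n\oplus X^{n+1}$, the map $x\mapsto (x,-\partial^X x)$ is not a chain map: one computes $\partial^Z(x,-\partial^X x) = (0,0)$, whereas the image of $\partial^X x$ would need to be $(\partial^X x, 0)$. The correct chain map $\iota\colon X^*\to Z^*$ is simply $x\mapsto (x,0)$, and then the second map should read $(x,y)\mapsto f(x) + s_{n+1}(y)$ (sign depending on the homotopy convention); with these corrections $\pi\circ\iota = f$, and $\pi$ is a chain map precisely because $f = \partial^Y s + s\,\partial^X$. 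Two further bookkeeping points: $Z^*$ as written is the cone of the identity on $X^*$ rather than the mapping cylinder (the cylinder of $f$ has terms $X^n\oplus X^{n+1}\oplus Y^n$); and $Z^*$ admits a term-split admissible \emph{monomorphism} from $X^*$ but no natural epimorphism onto $X^*$ --- for enough projectives one should instead use the shifted contractible complex with terms $X^{n-1}\oplus X^n$ and $(x,y)\mapsto \partial^X x + y$ as the epi. None of this is a conceptual gap, and the paper leaves exactly these details to standard arguments, but as written your factorization fails the chain-map check.
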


\begin{proof}
The fact that $\CK(kG)$ is triangulated follows from standard arguments
and is well known. It has a tensor structure because the class $\tsseq$
is closed under arbitrary tensors. That is, the sequence of objects in
a term split sequence splits as a sequence of $k$-modules. So if
$0 \to A^* \to B^* \to C^* \to 0$ is a term split sequence of $kG$-complexes
then so is $0 \to A^* \otimes X^* \to B^* \otimes X^* \to
C^* \otimes X^* \to 0$ for any complex $X^*$ in $\Cpx(kG)$. It does
not matter that tensoring with any $X^i$ might not be an exact functor.
The same holds for the considered subcategories. That is, for example,
the tensor of a term split sequence in $\cpxp(kG)$ with any object in
$\cpxp(kG)$ is again a term split sequence in $\cpxp(kG)$.
\end{proof}

\begin{prop} \label{prop:KS2}
Suppose that $k$ is a field.
The category $\CK(\cpx^b(kG))$ of bounded complexes of finitely generated
$kG$-modules and homotopy classes of maps is a Krull-Schmidt category.
\end{prop}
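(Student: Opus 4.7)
The plan is to pass the Krull-Schmidt property from $\cpx^b(kG)$ to its quotient $\CK(\cpx^b(kG)) = \cpx^b(kG)/\tsseq(\cpx^b(kG))\text{-}\Proj$. The engine is Proposition~\ref{thm:KS1}, which tells us that $\cpx^b(kG)$ is already Krull-Schmidt, together with the fact that the objects of the homotopy category are \emph{the same} as those of $\cpx^b(kG)$; only the hom-sets change, by factoring out the null-homotopic maps.

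First I would fix an object $X^* \in \cpx^b(kG)$ and invoke Proposition~\ref{thm:KS1} to write $X^* \cong X_1^* \oplus \cdots \oplus X_r^*$ with each $X_i^*$ indecomposable in $\cpx^b(kG)$ and $\End_{\cpx^b(kG)}(X_i^*)$ a local $k$-algebra. Since the quotient functor $\cpx^b(kG) \to \CK(\cpx^b(kG))$ is additive, the same direct sum decomposition holds in $\CK(\cpx^b(kG))$, so every object admits a finite decomposition into (possibly zero) summands coming from indecomposables of $\cpx^b(kG)$.

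Next I would examine the endomorphism ring of each summand in the quotient. For every indecomposable $X_i^*$, the canonical map gives
\[
\End_{\CK(\cpx^b(kG))}(X_i^*) \;=\; \End_{\cpx^b(kG)}(X_i^*)\big/\PHom_{\cpx^b(kG)}^{\tsseq}(X_i^*,X_i^*),
\]
which is a quotient of a local ring. Such a quotient is either zero (in which case $X_i^*$ is null-homotopic and vanishes in $\CK(\cpx^b(kG))$) or again local. Discarding the vanishing terms, we obtain a decomposition of $X^*$ in $\CK(\cpx^b(kG))$ as a finite direct sum of objects with local endomorphism rings; in particular these summands are indecomposable in the homotopy category.

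Finally, uniqueness of the decomposition up to reordering and isomorphism follows from the classical Krull-Schmidt / Azumaya argument, whose only input is that the endomorphism rings of the indecomposable summands are local, which we just established. The only place where there could be trouble would be if some object of $\CK(\cpx^b(kG))$ produced a nontrivial idempotent not already visible in $\cpx^b(kG)$, but because the image of a local ring under a surjection is again local (or zero), no new idempotents appear on indecomposable summands. Thus the Krull-Schmidt property descends to $\CK(\cpx^b(kG))$, which is what we wanted.
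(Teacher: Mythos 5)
Your proof is correct and follows essentially the same route as the paper: both deduce the Krull--Schmidt property of the quotient $\CK(\cpx^b(kG))$ from Proposition~\ref{thm:KS1} by observing that the endomorphism ring of an indecomposable in $\cpx^b(kG)$ remains local (or becomes zero) after passing to the homotopy category. Your write-up is somewhat more explicit than the paper's terse version in that it actually pushes the decomposition through the quotient functor, discards the null-homotopic summands, and invokes Azumaya for uniqueness, but the underlying idea is identical.
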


\begin{proof}
As above it can be seen that $\CK(\cpx^b(kG))$ is the stable or quotient
category of the exact category ($\cpx^b(kG), \CT$), where
$\CT$ is the collection of term split sequences of bounded complexes
of finitely generated $kG$-modules. Now, $\CK^b(\Cpx(kG))$
is a hom-finite, Krull-Schmidt category. Thus,
the Krull-Schmidt property is a consequence of the observation
that the endomorphism
ring of an indecomposable object is a quotient of a
finite dimensional local ring, and hence
remains a local ring. That $\CK^b(\Cpx(kG))$ is a Frobenius category follows
from the arguments given below.
\end{proof}	

Let $\CS(\cpxb(kG))$ denote the full subcategory of $\cpxb(kG)$
consisting of all complexes of $kG$-modules that are free and split
on restriction to $k$. That is,
the restriction to $k$ of such a complex is a finite direct sum of
complexes having the form either
\[
\xymatrix@-.4pc{
\dots \ar[r] & 0 \ar[r] & k \ar[r]^{\Id} & k \ar[r] & 0 \ar[r] & \dots
\quad \text{or} \quad
\dots \ar[r] & 0 \ar[r] & k \ar[r] & 0 \ar[r] & \dots
}
\]
In particular, it is direct sum of one- and two-term sequences, and the terms
are $k$-isomorphic to $k$.

For $X^*$ in $\CS(\cpxb(kG))$, let $(X^\#)^* = \Hom_k(X^*, k)$ be its
$k$-dual. It is the complex $(X^\#)^d = \Hom_k(X^{-d},k)$ and with
boundary map being the dual of the boundary map for $X^*$, adjusted by
a sign.  That is, the boundary map $\delta_d: (X^\#)^d \to (X^\#)^{d+1}$
is given by $\delta_d = (-1)^{d+1}\partial_{-d-1}^\#$, where
$\partial_{-d-1}^\#:(X^{d+1})^\# \to (X^{d})^\#$ is the ordinary dual:
$(\partial_{-d-1}^\#(\lambda))(x) = \lambda(\partial_{-d-1}(x))$ for
$\lambda \in (X^{d+1})^\#$, and $x \in X^d$.
For $Y^*$ any element of $\Cpx(kG)$, there is an isomorphism
$\bhom_k^*(X, Y) \cong (X^\#)^* \otimes Y^*$.    
Here $\bhom^j_k(X,Y)$ is the collection $\sum \Hom_k(X^i, Y^{i+j})$. The 
aggregrate $\bhom_k^*(X, Y)$ 
is a complex. An element in $\bhom^j_k(X,Y)$ is an indexed sequence of 
maps $\{f_i\}$ where $f_i: X^i \to Y^{i+j}$ is $k$-homomorphism. The boundary
map on the complex takes $f_i$ to 
$\partial(f_i) = (-1)^i(\partial_Y\circ f_i - f_i\circ\partial_X)$. 

There is the usual adjointness
\[
\Hom_{\Cpx(kG)}(V^* \otimes X^*, U^*) \cong
\Hom_{\Cpx(kG)}(V^*,  (X^\#)^* \otimes U^*),
\]
for any complexes $U^*$ and $V^*$.

Suppose that $U$ is a finitely generated $kG$-module. There is a trace map
$\Tr = \Tr_U: U^\# \otimes U \to k$
given by $\Tr(\lambda \otimes u) = \lambda(u)$ for $\lambda$ in
$U^\#$ and $u$ in $U$. Viewed from the isomorphism  $U^\# \otimes U \cong
\Hom_k(U,U)$, it becomes the usual trace map on matrices.
With a sign convention, it extends to a chain map on complexes.
Let $\kk^*$ be the complex having only one nonzero term which is in degree
zero and is equal to $k$. Then for any $U^*$ in $\CS(\cpxb(KG))$ there is a
trace map $\Tr: (U^*)^\# \otimes U^* \to \kk^*$, which in degree zero
$\Tr: ((U^*)^\# \otimes U^*)_0 \to k$ is the super trace map.
That is, on $(U^d)^\# \otimes U^d$, the super trace is $(-1)^d\Tr_{U^d}$.

Likewise for $U$ a finitely generated $kG$-module,
there is a unit homomorphism
$\iota = \iota_U: k \to U^{\#} \otimes U$,
which sends 1 to the identity homomorphism
$\Id_U \in \Hom_{kG}(U,U)$. Here $\Id_U = \sum \lambda_i \otimes v_i$
where $\{\lambda_i\}$ and $\{v_i\}$ are dual bases of $U^\#$ and $U$,
respectively. This map is dual to the trace map. For $U^*$ a complex in
$\CS(\cpxb(kG))$ there is also a unit homomorphism
$\iota: \kk^* \to (U^*)^\# \otimes U^*$ which is dual to the trace map.

It can be calculated that the composition
\[
\xymatrix@+1pc{
U^* \ar[r]^{\iota \otimes 1 \quad\qquad} &
(U^{\#})^* \otimes U^* \otimes U^* \ar[r]^{\cong} &
U^* \otimes (U^{\#})^* \otimes U^* \ar[r]^{\qquad\quad 1 \otimes \Tr} & U^*
}
\]
is the identity map.

\begin{lemma} \label{lem:chain1}
Assume that $U^* \in \CS(\cpxb(KG))$.
The maps $\Tr_U$ and $\iota_U$ are chain maps.
Moreover, $U^*$ is a direct summand of $U^* \otimes (U^*)^\# \otimes U^*$.
\end{lemma}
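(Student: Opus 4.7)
The plan is to verify the three claims by direct computation, using the Koszul sign rule for the tensor differential, the given sign for the differential on the dual complex, and the $(-1)^d$ sign built into the super trace on the summand $(U^d)^\# \otimes U^d$.

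For the chain-map property of $\Tr$: since $\kk^*$ is concentrated in degree zero, the only nontrivial identity to check is $\Tr_0 \circ \partial = 0$ on the degree $-1$ part of $(U^*)^\# \otimes U^*$. A typical homogeneous element there has the form $\lambda \otimes u$ with $\lambda \in (U^{-i})^\#$ and $u \in U^{-i-1}$. Its tensor differential decomposes as $\delta\lambda \otimes u + (-1)^i\,\lambda \otimes \partial u$, landing respectively in $(U^{-i-1})^\# \otimes U^{-i-1}$ and $(U^{-i})^\# \otimes U^{-i}$. Applying the super trace to each summand and using the sign relating $\delta\lambda$ to $\partial^\#\lambda$, the two contributions cancel. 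The chain-map property of $\iota_U$ follows by a dual argument: one writes $\iota(1)$ as a signed sum of the elements $\Id_{U^{-d}} \in (U^{-d})^\# \otimes U^{-d}$ and checks that the tensor differential on this sum telescopes to zero by the same sign identity read in reverse.

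For the direct-summand claim, I would invoke the composition identity stated immediately before the lemma, namely that $(1 \otimes \Tr) \circ \tau \circ (\iota \otimes 1) = \Id_{U^*}$, where $\tau$ denotes the signed swap of the first two factors of $(U^*)^\# \otimes U^* \otimes U^*$. Each arrow in this composition is a chain map: $\iota \otimes 1$ and $1 \otimes \Tr$ by the previous two paragraphs combined with the fact that tensoring a chain map with an identity is a chain map, and $\tau$ because the signed swap is a standard chain isomorphism of tensor products. The composite identity then presents $\tau \circ (\iota \otimes 1)$ as a split monomorphism in $\Cpx(kG)$ with retraction $1 \otimes \Tr$, exhibiting $U^*$ as a direct summand of $U^* \otimes (U^*)^\# \otimes U^*$.

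The main technical obstacle is the sign bookkeeping in the chain-map checks: the $(-1)^{d+1}$ on the dual differential, the Koszul sign in the tensor differential, and the $(-1)^d$ in the super trace must combine correctly. Once these conventions are aligned, both verifications collapse to the same one-line cancellation, after which the summand assertion is essentially formal.
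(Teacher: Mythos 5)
Your proposal matches the paper's own proof in both structure and substance: you reduce the $\Tr$ chain-map check to the single vanishing identity in degree $-1$ (as the paper does), handle $\iota$ by duality, and then read off the splitting from the triangle identity $(1 \otimes \Tr)\circ\tau\circ(\iota\otimes 1) = \Id_{U^*}$ in the abelian category $\cpxb(kG)$. The only cosmetic difference is that you offer a direct telescoping verification for $\iota$ where the paper simply invokes duality with $\Tr$.
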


\begin{proof}
By hypothesis, $U^*$ is a bounded complex of finitely generated
$kG$-modules whose restriction to $k$ is both free and split.
\[
\xymatrix{
\dots \ar[r] & U^{-1} \ar[r]^\partial & U^0 \ar[r]^\partial &
U^{1} \ar[r]  &  \dots
}
\]
Its dual has boundary map $\delta: (U^i)^\# \to (U^{i-1})^\#$ is given
by $\delta(\lambda) = (-1)^{i-1}\lambda \circ \partial$.
Then the tensor product has the form
\[
\xymatrix{
(U^\# \otimes U)^*: & \dots \ar[r] & (U^\# \otimes U)^{-1} \ar[r] &
(U^\# \otimes U)^0 \ar[r]^\partial &
(U^* \otimes U)^{1} \ar[r]  &  \dots
}
\]
Let $\kk^*$ be the complex with $\kk^0 = k$ and all other terms zero.
To check that the super trace $\Tr: (U^\# \otimes U)^* \to \kk^*$ is
a chain map. we need only check that $\Tr\partial: (U^\# \otimes U)^{-1}
\to \kk^0 \cong k$ is the zero map. For this, choose $\lambda_i \in
(U^i)^\#$ and $x_{i-1} \in U^{i-1}$. Then
\begin{align*}
\Tr\partial(\lambda_{i} \otimes u_{i-1}) & =
\Tr(\delta(\lambda_{i}) \otimes u_{i-1} + (-1)^{i}\lambda_{i} \otimes
\partial(u_{i-1})) \\
     & = (-1)^{i-1}\delta(\lambda_{i})(u_{i-1}) +
      (-1)^{i}(-1)^{i-1}\lambda_{i}\partial(u_{i-1}) \\
     & = (1 + -1) \lambda_{i-1}\partial(u_{i-1}) = 0.
\end{align*}
The unit map is the dual of the (super) trace map and hence is also
a chain map.

For the second statement, we note that the composition
of $I \otimes 1$ with $1 \otimes \Tr$ is the identity of $U^* \cong
\kk^* \otimes U^* \cong U^* \otimes \kk^*$
(see \cite{auslander-carlson} for the module version).
Because $\cpxb(kG)$ is an abelian
category, this gives us a direct sum splitting.
\end{proof}

We say that collection $\SE$ of exact sequences
in a tensor subcategory $\bC$ of $\Cpx(kG)$ is closed
under arbitrary tensor products, if
whenever a sequence $0 \to A^* \to B^* \to C^* \to 0$ is in $\SE$, then
so also is $0 \to A^* \otimes X^* \to B^* \otimes X^* \to
C^* \otimes X^* \to 0$ for any complex $X^*$ in $\bC.$

With these notions in mind we can prove the following.

\begin{theorem} \label{thm:tensor}
Suppose that $\SE$ is a collection of sequences in $\Cpx(kG)$ such that
($\Cpx(kG)$, $\SE$) is an exact category. We assume the following.
\begin{itemize}
\item[a.] $\SE$ is closed under arbitrary tensor products.
\item[b.] The trivial complex $\kk^*$ has a projective cover
$\psi: P(\kk)^* \to \kk^*$ relative to $\SE$ in $\CS(\cpxb(KG))$.
In particular, we have an exact sequence in $\SE$
\[
\xymatrix{
E_{\kk} : & 0 \ar[r] & \Omega_{\SE}(\kk)^* \ar[r] &
P(\kk)^* \ar[r]^\psi & \kk^* \ar[r] & 0
}
\]
where $P(\kk)^*$ is in $\SE$-$\Proj$ and in $\CS(\cpxb(KG))$. That is, each
$P(\kk)^n$ is a free $k$-module of finite rank.
\item[c.] For any object $X^*$, $X^* \otimes P(\kk)^*$ is $\SE$-projective.
\item[d.] The dual sequence to $E_{\kk}$ is in $\SE$ and $(P(\kk)^*)^\#$
is $\SE$-injective.
\item[e.] For any object $X^*$, $X^* \otimes (P(\kk)^*)^\#$ is $\SE$-injective.
\end{itemize}
Then we have the following.
\begin{enumerate}
\item $(P(\kk)^*)^\#$ is $\SE$-projective.
\item An object is  $\SE$-projective if
and only if it is a direct summand of an object having
the form $X^* \otimes P(\kk)^*$ for some complex $X^*$,
\item Every $\SE$-injective complex is a direct summand of an object having
the form $X^* \otimes (P(\kk)^*)^\#$ for some complex $X^*$,
\item For any complex $X^*$ and any $n$, $\Omega_{\SE}^n(\kk^*) \otimes X^*
\cong \Omega_{\SE}^n(X^*) \oplus Y^*$
for some $\SE$-projective complex $Y^*$.
\item The exact category ($\Cpxb(kG)$, $\SE$) has enough projectives and
injectives, and is a Frobenius category.
\end{enumerate}
\end{theorem}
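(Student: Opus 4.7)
The proof revolves around tensoring the sequence $E_\kk$, or its $k$-dual, with various complexes; hypothesis (a) keeps each such sequence in $\SE$, and one then appeals to the splitting criterion that a sequence in $\SE$ splits whenever its first term is $\SE$-injective or its last term is $\SE$-projective.  As an immediate instance of this strategy, tensoring $E_\kk$ with an arbitrary complex $X^*$ gives the admissible sequence
\[
0 \to \Omega_\SE(\kk)^*\otimes X^* \to P(\kk)^*\otimes X^* \to X^* \to 0,
\]
whose middle term is $\SE$-projective by (c); this is an admissible epi from an $\SE$-projective onto $X^*$, establishing enough projectives.  Dually, tensoring the $k$-dual of $E_\kk$ (in $\SE$ by (d)) with $X^*$ embeds $X^*$ into the $\SE$-injective $(P(\kk)^*)^\#\otimes X^*$ by (e), giving enough injectives.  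These two constructions take care of the first assertion of (5).

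For (1), tensor $E_\kk$ with $(P(\kk)^*)^\#$: the first term $\Omega_\SE(\kk)^*\otimes (P(\kk)^*)^\#$ is $\SE$-injective by (e), so the sequence splits and exhibits $(P(\kk)^*)^\#\cong \kk^*\otimes (P(\kk)^*)^\#$ as a direct summand of $P(\kk)^*\otimes (P(\kk)^*)^\#$, which is $\SE$-projective by (c).  For (2), the reverse inclusion is just (c); conversely, for an $\SE$-projective $Q^*$ the sequence $E_\kk\otimes Q^*$ splits by $\SE$-projectivity of its last term $Q^*$ and presents $Q^*$ as a summand of $P(\kk)^*\otimes Q^*$.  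Part (3) is strictly dual, using the $k$-dual of $E_\kk$ and splitting on the left by $\SE$-injectivity of the given $\SE$-injective object.

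For (4), the admissible presentation displayed above realizes $\Omega_\SE(\kk)^*\otimes X^*$ as a first syzygy of $X^*$.  Schanuel's lemma, which holds in any exact category with enough projectives, applied to this sequence and to an admissible presentation of $X^*$ defining $\Omega_\SE(X^*)$, shows that $\Omega_\SE(\kk)^*\otimes X^*$ and $\Omega_\SE(X^*)$ agree modulo $\SE$-projective summands, so $\Omega_\SE(\kk)^*\otimes X^*\cong \Omega_\SE(X^*)\oplus Y^*$ for some $\SE$-projective $Y^*$.  Induction on $n$, applied to $\Omega_\SE^{n-1}(X^*)$, then handles $n>1$, and the case $n<0$ is the analogous cosyzygy statement obtained from the $k$-dual of $E_\kk$ and (e).

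It remains for (5) to show that $\SE$-projectives and $\SE$-injectives coincide.  By (2), it suffices to show that every $X^*\otimes P(\kk)^*$ is $\SE$-injective.  Tensor the $k$-dual of $E_\kk$ with $X^*\otimes P(\kk)^*$: by (e) the middle term $(X^*\otimes P(\kk)^*)\otimes (P(\kk)^*)^\#$ is $\SE$-injective, while by (c) the last term $(X^*\otimes \Omega_\SE(\kk)^{*\#})\otimes P(\kk)^*$ is $\SE$-projective; the sequence splits and realises $X^*\otimes P(\kk)^*$ as a summand of an $\SE$-injective.  The symmetric argument shows every $X^*\otimes (P(\kk)^*)^\#$ is $\SE$-projective, so by (2) and (3) the two classes coincide and $(\Cpxb(kG),\SE)$ is a Frobenius category.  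The main obstacle throughout is organizational -- choosing the right sequence to tensor with the right complex so that (b)--(e) supply the needed splitting hypothesis -- rather than any deep technical difficulty.
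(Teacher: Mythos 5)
Your proof is correct and follows essentially the same strategy as the paper: tensor the sequence $E_{\kk}$ (or its dual) with various complexes, use hypothesis (a) to keep the result in $\SE$, and invoke the standard splitting criterion for $\SE$-sequences with projective end or injective start. The only notable divergence is in part (1): the paper derives the $\SE$-projectivity of $(P(\kk)^*)^\#$ directly from Lemma~\ref{lem:chain1} (which exhibits $(P(\kk)^*)^\#$ as a summand of $(P(\kk)^*)^\# \otimes P(\kk)^* \otimes (P(\kk)^*)^\#$), whereas you obtain the same conclusion by splitting $E_{\kk}\otimes (P(\kk)^*)^\#$ via hypothesis (e); both are valid and of equal length. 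For part (4) you explicitly invoke Schanuel's lemma, which is a cleaner formulation of what the paper compresses into the phrase that ``$E_{\kk}\otimes X^*$ is a relative projective cover of $X^*$''; this is a good clarification, since minimality of the cover is not needed and Schanuel is what actually delivers uniqueness of syzygies up to projective summands.
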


\begin{proof}
The first statement is a direct consequence of assumption (c) and Lemma
\ref{lem:chain1}, since $(P(\kk)^*)^\#$ is a direct summand of
$(P(\kk)^*)^\# \otimes P(\kk)^* \otimes (P(\kk)^*)^\#$.
For the second statement, suppose that $X^*$ is $\SE$-projective.
Then the sequence $\Hom(X^*, E_{\kk} \otimes X^*)$ is exact, since
$E_{\kk} \otimes X^*$ is in $\SE$. However, this implies that
$X^*$ is a direct summand of $P(\kk)^* \otimes X^*$. The converse is
statement (c).

The dual argument concludes that if $X^*$ is $\SE$-injective then it is a
direct summand of $(P(\kk)^*)^\# \otimes X^*$ and hence it is projective.
Likewise, $\SE$-projective objects are also relatively injective.
The last two statements use the facts that the sequence $E_{\kk} \otimes X^*$
is a relative projective cover of $X^*$ and $E_{\kk}^\# \otimes X^*$
is a relative injective hull of $X^*$.
\end{proof}

Note that if $\SE$ is the collection of term split sequences, then
$P(\kk)^*$ is the two term complex with the nonzero terms in degrees
0 and 1, as in the diagram
\[
\xymatrix{
P(\kk)^*: \ar[d]^{\psi}& \dots \ar[r] & 0 \ar[r] \ar[d]&
k \ar[r] \ar[d]^{\psi_0}
& k \ar[r] \ar[d] & 0 \ar[r] & \dots \\
\kk^* : & \dots \ar[r] & 0 \ar[r] & k \ar[r] &  0 \ar[r] & \dots
}
\]
In this case $P(\kk)^*$ satisfies all of the conditions of the above theorem.

\begin{theorem}
Let $\bC$ denote one of the categories $\Cpx^*(kG)$, $\cpx(kG)$ or
$\cpx^*(kG)$ for $* = +, -$ or $b$.
Suppose that $\SE$ is a collection of sequences of
objects in $\SE$ such that
($\bC$, $\SE$) is an exact category, and that $\CE$ satisfies the hypotheses
of Theorem \ref{thm:tensor}. Then the conclusions of that theorem also
hold for ($\bC$, $\SE$).
\end{theorem}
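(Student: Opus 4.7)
The plan is to show that the proof of Theorem~\ref{thm:tensor} transports verbatim to $\bC$, once we verify that all constructions appearing in that proof remain inside $\bC$. The only objects outside $\bC$ that entered the argument are the tensor factors $P(\kk)^*$ and $(P(\kk)^*)^\#$; by hypothesis both lie in $\CS(\cpxb(kG))$, so both are bounded complexes with finitely many nonzero terms, each of which is a finitely generated free $k$-module. This is enough for the closure properties to hold.

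First I would dispense with the closure step. For $\Cpxb(kG)$, a tensor of two bounded complexes is bounded. For $\Cpxp(kG)$ (resp.\ $\Cpxm(kG)$), tensoring an object bounded below (resp.\ above) with a bounded complex remains bounded below (resp.\ above). For $\cpx(kG)$ and its bounded variants, each degree of $X^* \otimes P(\kk)^*$ is a \emph{finite} direct sum of tensor products of finitely generated $kG$-modules, hence is again finitely generated; the same holds with $(P(\kk)^*)^\#$ in place of $P(\kk)^*$. Therefore for every $X^* \in \bC$, both $X^* \otimes P(\kk)^*$ and $X^* \otimes (P(\kk)^*)^\#$ lie in $\bC$.

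Next I would re-run the argument of Theorem~\ref{thm:tensor} inside $\bC$. The trace and unit maps of Lemma~\ref{lem:chain1} are defined for any object in $\CS(\cpxb(kG))$, and the identity $(1\otimes \Tr)\circ(\iota\otimes 1) = \Id$ realizes $X^*$ as a direct summand of $X^* \otimes (P(\kk)^*)^\# \otimes P(\kk)^*$; this is an identity in $\bC$ because both tensor factors stay in $\bC$ by the previous step. From this, assumptions (c) and (e) give that $(P(\kk)^*)^\#$ is $\SE$-projective (statement~(1)) and dually that $P(\kk)^*$ is $\SE$-injective, and the same splitting argument then shows that every $\SE$-projective (resp.\ $\SE$-injective) object of $\bC$ is a summand of $X^*\otimes P(\kk)^*$ (resp.\ $X^* \otimes (P(\kk)^*)^\#$) for some $X^* \in \bC$, proving (2) and (3). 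Tensoring the sequence $E_{\kk}$ with $X^*$ remains in $\SE$ by assumption (a) and remains in $\bC$ by the closure step, so it provides a relative projective cover of $X^*$ inside $\bC$; the first syzygy extracted from this sequence is $\Omega_{\SE}(X^*)$, which differs from $\Omega_{\SE}(\kk^*)\otimes X^*$ only by the $\SE$-projective summand coming from (2), giving (4) by induction on $n$. Dually, $E_{\kk}^\# \otimes X^*$ is a relative injective hull, which yields (5).

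The only genuinely delicate point is the closure check for $\cpx(kG)$, where unbounded-in-both-directions complexes are allowed; the fact that $P(\kk)^*$ has only finitely many nonzero terms is exactly what rescues finite generation in each degree. Once this is acknowledged, everything else is a purely formal transcription of the proof of Theorem~\ref{thm:tensor}, so there is no new content beyond checking that the ambient category is stable under the few tensor operations used.
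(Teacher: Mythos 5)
Your proposal is correct and follows exactly the paper's approach: the paper's proof simply observes that $P(\kk)^* \otimes X^*$ remains in $\bC$ whenever $X^*$ is, and then rerenders the argument of Theorem~\ref{thm:tensor}. You have just made the closure checks explicit, which is the only point of substance.
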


\begin{proof}
The proof is the same as for Theorem \ref{thm:tensor}. It is only necessary to
notice such thing as the fact that $
P(\kk)^* \otimes X^*$ is in $\bC$ whenever $X^*$ is in $\bC$.
\end{proof}


\section{Relative projectivity}\label{sec:relproj}
In this section we present the basics of relative projectivity, relative to
a module and to a collection of subgroups. The ideas take place in the
context of $kG$-modules where $G$ is a finite group and $k$ is a commutative
ring such that the order of $G$ is not invertible in $k$. For some background
see the paper \cite{carlson-peng-wheeler}.
Throughout the section, the symbol $V$ denotes a $kG$-module
that, as a module over $k$, is free and has finite rank.

\begin{definition} \label{defi:relproj} \cite{okuyama}
Suppose that $V$ is a finitely generated $kG$-module. A $kG$-module $M$ is
said to be relatively $V$-projective provided $M$ is a direct summand of
$V \otimes X$ for some $kG$-module $X$. A complex $X^*$ in $\Cpx(kG)$
is said to be $V$-projective if it is a direct summand of $V \otimes Y^*$
for some complex $Y^*$ in $\Cpx(kG)$.
\end{definition}

The full  subcategory of $V$-projective complexes is
denoted $V$-$\Proj(\Cpx(kG))$ or just $V$-$\Proj$ if there is
no confusion. It is closed under direct sums and summands,
but not under extensions. It is also closed under tensor products with
arbitrary modules and complexes, by associativity.

Suppose that $\CH$ is a collection of subgroups 
of $G$. We say that $X^* \in \Cpx(kG)$ is relatively $\CH$-projective
if it is $V$-projective for $V = \sum_{H \in \CH} k_H^{\uparrow G}$. In
other words, by Frobenius reciprocity (see Theorem \ref{thm:std-reptheory}), 
$X^*$ is $\CH$-projective if it
is a summand of a direct sum of complexes of modules induced from
$\Cpx(kH)$ for $H \in \CH$.

\begin{definition}\label{def:vsplit}
An exact sequence of objects $E: 0 \to A^* \to B^* \to C^* \to 0$ in
$\Cpx(kG)$ is $V$-split if the tensor product $V \otimes E$ is split.
It is $V$-term split if $V \otimes E$ is term split. It is term+$V$-split
if it is both term split and also $V$-split.
\end{definition}

We note one fact that will be of some use in later sections. Its proof is
simply that if $E$ is a sequence of $k$-modules, the $k\otimes_k E$ is split
if and only if $E$ is split.

\begin{lemma}
Suppose that $U$ is free as a module over $k$. Then any $U$-split sequence
is split as a sequence of $k$-modules.
\end{lemma}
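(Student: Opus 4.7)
The plan is to exhibit the sequence $E$ itself as a $k$-module direct summand of $U \otimes_k E$, and then to appeal to the observation noted just before the lemma --- splitness of a short exact sequence passes to $k$-module direct summands --- to conclude.

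Concretely, I would first use the $k$-freeness of $U$ to single out one basis element, producing a $k$-module decomposition $U \cong k \oplus U'$ (assuming $U \ne 0$; the zero case is degenerate and should be excluded or treated separately). Tensoring the short exact sequence $E$ over $k$ against this splitting, and using $k \otimes_k E \cong E$, gives a direct-sum decomposition
\[
U \otimes_k E \;\cong\; E \,\oplus\, (U' \otimes_k E)
\]
of short exact sequences of $k$-modules. This requires nothing more than the distributivity of $\otimes_k$ over $\oplus$ applied termwise, and the decomposition is natural, so the inclusion of $E$ and the projection onto $E$ are $k$-linear chain maps of sequences.

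By hypothesis $U \otimes_k E$ is split as a sequence of $kG$-modules, hence a fortiori split after restriction of scalars to $k$. A direct summand (in the category of short exact sequences of $k$-modules) of a split sequence is again split --- one simply composes the $k$-linear inclusion $E \hookrightarrow U \otimes_k E$, a chosen $k$-linear section of $U \otimes_k E$, and the projection $U \otimes_k E \twoheadrightarrow E$ arising from the displayed decomposition --- so $E$ is split as a sequence of $k$-modules. The argument is essentially formal, and the only subtlety is the trivial edge case $U = 0$; accordingly I expect no real obstacle.
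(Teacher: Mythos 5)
Your proof is correct and is in substance the same as the paper's (one-line) argument: the paper's remark that ``$k\otimes_k E$ is split iff $E$ is split'' is precisely the observation that $E$ occurs as a $k$-module direct summand of $U\otimes_k E$ once one splits off a rank-one free summand from $U$, which is what you spell out. Your explicit treatment of the $U=0$ edge case and of why a direct summand of a split sequence is split is a modest expansion of detail, not a different route.
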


The collections of $V$-split sequences,
$V$-term split sequences and term+$V$-split sequences are denoted
$\Vsplt$-$\seq$, $V$-$\ts$-$\seq$ and $\ts$+$\Vsplt$-$\seq$, respectively.
All of these collections are closed under arbitrary tensors
since $V$ is free as a $k$-module. The next
task is to identify the projective objects relative to the collections
and to show that the exact categories are Frobenius categories. For
this, it is helpful to have some additional information on the
relative projectivity.

Note here that, if $k$ is a field of characteristic $p$ dividing the
order of $G$ and if $V$ is an absolutely indecomposable,
finitely generated $kG$-module, then
the trace map $V^\# \otimes V \to k$ is split if and
only if $p$ does not divide the dimension of $V$ (see \cite{benson-carlson}).
That is, the trace map is split
if and only if $\Tr(\Id_V) \neq 0$. In the case that $p$ does not divide
the dimension of $V$ or of any direct summand of $V$, then we have
that $k$ is $V$-projective and hence every $kG$-module is $V$-projective.
For the rest of this paper we avoid this situation.

We introduce the following variation on the homotopy category.
As we use the notation several times we here give it a label.
For the remainder of the section assume the following notation. 

\begin{notation} \label{nota:cpx}
Let $\bC$ denote any one of the categories
$\Cpx(kG)$, $\cpx(kG)$, $\Cpx^*(kG)$, or $\cpx^*(kG)$
for $* = +, -$ or $b$. If the group is in question, denote
the category $C_G$.
\end{notation}

\begin{prop} \label{prop:Vsplitproj}
Assume that $V$ is free of finite rank as a $k$-module.
Let $\SE = \Vsplt$-$\seq$ the collection of $V$-split
sequences in $\bC$. Then the  quotient category
$\CK_{\Vsplt}(\bC) =$ $\bC/\SE$-$\Proj$ is a Frobenius category.
The projective objects form the set
$\SE$-$\Proj$ which consists of all direct summands of objects having the
form $X^* \otimes V^{\#} \otimes V$ for $X^*$ in $\bC(kG)$. In
addition, if $k$ is a field then, the category
$\CK_{\Vsplt}(\cpxb(kG))$ is a Krull-Schmidt category.
\end{prop}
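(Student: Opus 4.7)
The plan is to apply Theorem \ref{thm:tensor} with $P(\kk)^*$ chosen to be the one-term complex concentrated in degree zero whose only nonzero term is $V^{\#}\otimes V$, and with $\psi$ the trace map $\Tr_V\colon V^{\#}\otimes V\to k$. Since $V$ is $k$-free of finite rank, so is $V^{\#}\otimes V$, placing $P(\kk)^*$ in $\CS(\cpxb(kG))$, and the relevant short exact sequence is $E_\kk\colon 0\to\ker(\Tr_V)\to V^{\#}\otimes V\to\kk^*\to 0$.

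I would then verify in turn the five hypotheses (a)--(e) of Theorem \ref{thm:tensor}. Hypothesis (a) is immediate: if $V\otimes E$ splits as a sequence of $k$-modules, then further tensoring with any complex $X^*$ keeps the sequence $V$-split. For (b), applying $V\otimes-$ to $E_\kk$ yields a sequence that splits, because by Lemma \ref{lem:chain1} the composite $V\xrightarrow{\iota\otimes 1}V\otimes V^{\#}\otimes V\xrightarrow{1\otimes\Tr}V$ is the identity, furnishing the required retraction. Hypothesis (c) is immediate from Definition \ref{defi:relproj} via the identification $X^*\otimes V^{\#}\otimes V\cong V\otimes(X^*\otimes V^{\#})$. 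For (d) and (e), the crux is the symmetry between $V$ and $V^{\#}$: applying Lemma \ref{lem:chain1} equally to $V^{\#}$ shows that $V^{\#}$ is a summand of $V^{\#}\otimes V\otimes V^{\#}$, from which it follows that a sequence $E$ is $V$-split if and only if it is $V^{\#}$-split. Combined with the standard adjunction $\Hom(E,V\otimes V^{\#})\cong\Hom(V^{\#}\otimes E,V^{\#})$, valid because $V$ is $k$-dualizable, this shows that $(V^{\#}\otimes V)^{\#}\cong V\otimes V^{\#}$ and its tensor with any $X^*$ are $\SE$-injective, and dually that the dual sequence $E_\kk^{\#}$ remains $V$-split.

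With the hypotheses in hand, Theorem \ref{thm:tensor} delivers both the Frobenius property for $(\bC,\SE)$ and the identification of $\SE$-projectives as direct summands of complexes of the form $X^*\otimes V^{\#}\otimes V$; the quotient $\CK_{\Vsplt}(\bC)$ then inherits its triangulated structure from the standard construction recalled in Section \ref{sec:basiccats}. For the Krull-Schmidt claim when $k$ is a field and $\bC=\cpxb(kG)$, I would argue exactly as in the proof of Proposition \ref{prop:KS2}: by Proposition \ref{thm:KS1}, $\cpxb(kG)$ is hom-finite and Krull-Schmidt, so the endomorphism ring of an indecomposable object is a local Artinian ring; passing to the quotient by the ideal of morphisms factoring through $\SE$-projectives yields a quotient ring that is either still local or is zero, and hence the Krull-Schmidt property persists. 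The main subtle point I anticipate is carefully bookkeeping the $V\leftrightarrow V^{\#}$ symmetry invoked for (d) and (e), but this ultimately reduces to iterated applications of Lemma \ref{lem:chain1}.
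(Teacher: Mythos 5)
Your proposal follows essentially the same route as the paper: take $P(\kk)^*$ to be the one-term complex with $V^{\#}\otimes V$ in degree zero with $\psi=\Tr_V$, verify the hypotheses of Theorem \ref{thm:tensor} using Lemma \ref{lem:chain1}, and deduce the Krull-Schmidt claim from Proposition \ref{thm:KS1} by passing to the quotient. You spell out hypotheses (d) and (e) (the $V\leftrightarrow V^{\#}$ symmetry) a bit more explicitly than the paper, which merely says these are ``similar,'' but the underlying argument is the same.
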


\begin{proof}
If $k$ is a field, then the category $\CK_{\Vsplt}(\cpxb(kG))$,
is a quotient of a Krull-Schmidt category.
That is, the endomorphism ring of any indecomposable object is
the quotient of a finite dimensional local $k$-algebra and hence is a local
ring. 

The rest of the proof follows from Theorem \ref{thm:tensor}, once we determine
the projective objects. Let $\CV^*$ be the complex with only one
nonzero term, which is $V^\# \otimes V$ in degree zero. Then there is an
exact sequence
\[
\xymatrix{
0 \ar[r] & W^*  \ar[r] & \CV^* \ar[r]^{\mu_*} & \kk^* \ar[r] & 0
}
\]
where $\mu_0 = \Tr: V^\# \otimes V \to k$ is the trace map in degree zero.
This sequence is $V$-split by Lemma \ref{lem:chain1},
and the middle term is $V$-projective,
implying that the middle term is in $\SE$-$\Proj$.
Consequently, it is the sequence of a relative projective cover of
$\kk^*$ plus, perhaps, a sequence having the form
$0 \to U^* \cong U^* \to 0$. Here, $U^*$
is a complex with only one nonzero term that is a direct summand
of $V^\# \otimes V$, which is $V$-projective. In a similar fashion
we see that the dual sequence is an injective hull of $\kk^*$. The fact that
$\SE$ is closed under arbitrary tensor products is obvious.
Observe that $\CV^* \otimes X^* \cong V^\# \otimes V \otimes X^*$. Thus
$\CV^* \otimes X^*$ is $\SE$-projective since for any sequence $E$
in $\SE$, we have that
$\Hom_{kG}(\CV^* \otimes X^*, E) \cong
\Hom_{kG}(X^*, V^\# \otimes V \otimes E)$ which is exact. The proofs
of statements  about duals in Theorem \ref{thm:tensor} are similar.
Hence the hypotheses of that theorem are all satisfied and the proof
is complete.
\end{proof}

\begin{prop} \label{prop:VTsplitproj}
Assume that $V$ is free of finite rank as a $k$-module.
Let $\SE = V$-$\ts$-$\seq$ in $\bC$. The quotient category
$\CK_{V\thyph\ts}(\bC) =$ $\bC/\SE$-$\proj$
is a Frobenius category. The projective objects form the collection
$\SE$-$\Proj$ consisting of all objects that are both split
and contractible complexes of $V$-projective
modules. That is $\SE$-$\Proj$ contains all complexes
having the form $\dots 0 \to W \to W \to 0 \dots$ where $W$ is
$V$-projective, the nonzero terms occur in consecutive degrees
and the boundary map is the identity. It consists of all
direct sums of such sequences when such a direct sum is in $\bC$.
If $k$ is a field, then
the category $\CK_{V\thyph\ts}(\cpxb(kG))$
is a Krull-Schmidt category.
\end{prop}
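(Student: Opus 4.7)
The plan is to mirror the proof of Proposition \ref{prop:Vsplitproj} by applying Theorem \ref{thm:tensor} to $\SE = V\thyph\ts\thyph\seq$, with a suitably chosen relative projective cover of the trivial complex $\kk^*$. The natural candidate, amalgamating the term-split cover $(0 \to k \xrightarrow{\id} k \to 0)$ of Proposition \ref{prop:homotop} with the $V$-split cover $V^\# \otimes V$ of Proposition \ref{prop:Vsplitproj}, is the two-term contractible complex
$$P(\kk)^* : \quad \cdots \to 0 \to V^\# \otimes V \xrightarrow{\id} V^\# \otimes V \to 0 \to \cdots$$
concentrated in degrees $0$ and $1$, with $\psi: P(\kk)^* \to \kk^*$ the trace map in degree $0$.

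First I would check the hypotheses of Theorem \ref{thm:tensor}. Closure of $\SE$ under arbitrary tensor products (a) uses the associativity $V \otimes (E \otimes X) \cong (V \otimes E) \otimes X$ together with the standard fact that tensoring a term-split sequence of $kG$-complexes with any $kG$-complex remains term-split (since $kG$ is a Hopf algebra). For (b), the kernel of $\psi$ is $0 \to \Ker(\Tr) \hookrightarrow V^\# \otimes V \to 0$, and tensoring with $V$ yields a term-split sequence: in degree zero the splitting is supplied by the unit-trace identity of Lemma \ref{lem:chain1}, and in degree one it is trivial. Also $P(\kk)^* \in \CS(\cpxb(kG))$ since $V^\# \otimes V$ is free of finite rank over $k$. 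Hypotheses (d) and (e) are handled symmetrically by $k$-dualizing, since $(P(\kk)^*)^\#$ is again a complex of the same shape.

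The crux is hypothesis (c): for any $X^*$, the complex $X^* \otimes P(\kk)^*$ is $\SE$-projective. For this I would first establish the key lemma that any $P^* = (0 \to W \xrightarrow{\id} W \to 0)$ with $W$ a $V$-projective $kG$-module is $\SE$-projective. A chain map out of $P^*$ is determined by its component in one degree, so $\Hom(P^*, M^*) \cong \Hom_{kG}(W, M^d)$, and the claim reduces to exactness of $\Hom_{kG}(W, -)$ on each degree of a $V$-term split sequence. Using that $V$-projectivity coincides with $V^\#$-projectivity (via $V \cong V^{\#\#}$ and Lemma \ref{lem:chain1}), I can write $W$ as a summand of $V^\# \otimes Y$ and apply the adjunction $\Hom_{kG}(V^\# \otimes Y, -) \cong \Hom_{kG}(Y, V \otimes -)$; the $V$-term split hypothesis is precisely the $kG$-splitting of $V \otimes E^d$, giving the required exactness. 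Since $X^* \otimes P(\kk)^*$ is contractible with terms $X^d \otimes V^\# \otimes V = V \otimes (X^d \otimes V^\#)$ that are $V$-projective, it decomposes as a direct sum of such two-term complexes and is $\SE$-projective.

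The identification of $\SE$-$\Proj$ now follows from Theorem \ref{thm:tensor}(2): the $\SE$-projectives are precisely the summands of complexes $X^* \otimes P(\kk)^*$, hence contractible complexes of $V$-projective modules. Conversely, any such contractible complex splits as $\bigoplus_d (0 \to B^d \xrightarrow{\id} B^d \to 0)$ with $B^d = \Im(\partial_d)$ a summand of the $V$-projective term, hence itself $V$-projective. Theorem \ref{thm:tensor}(5) then delivers that $(\bC, \SE)$ is a Frobenius category. The Krull--Schmidt property for $\CK_{V\thyph\ts}(\cpxb(kG))$ follows as in Proposition \ref{prop:Vsplitproj}: endomorphism rings of indecomposables in the quotient are quotients of finite-dimensional local $k$-algebras, hence local. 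The main obstacle is the key lemma characterizing $\SE$-projectivity of the two-term complexes, which relies on the not-entirely-obvious equivalence of $V$- and $V^\#$-projectivity.
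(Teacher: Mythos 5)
Your proposal is correct and follows essentially the same route as the paper: take the two-term contractible complex $P(\kk)^* = \CV^*$ with $V^\# \otimes V$ in degrees $0$ and $1$ and trace map $\Tr$ in degree $0$ as the relative projective cover of $\kk^*$, then feed this into Theorem \ref{thm:tensor}. The paper's own proof is terser and dispatches the verification that $X^* \otimes \CV^*$ is $\SE$-projective (hypothesis (c)) with a reference back to the previous proposition and ``the rest follows from Theorem \ref{thm:tensor}''; you make this explicit by decomposing the contractible complex into two-term summands with $V$-projective terms and checking $\Hom_{kG}(W,-)$-exactness on $V$-term-split sequences via the $V$ versus $V^\#$-projectivity interchange, which is a genuine (if routine) point that the paper leaves to the reader.
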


\begin{proof}
The proof is similar to the previous proof except for the issue of
the projective cover of the trivial complex.
Again, $\SE$ is closed under arbitrary tensor products.
Let $\CV^*$ be the complex with all zero terms except
for $V^\# \otimes V$ in degrees zero and one. Then we have a map of
complexes
\[
\xymatrix{
\CV^*: \ar[d]^{\mu} & \dots 0 \ar[r] &
V^\# \otimes V \ar[r]^{\Id_V} \ar[d]^{\Tr} &
V^\# \otimes V \ar[r] \ar[d] & 0 \ar[r] & \dots \\
\kk^*: & \dots 0 \ar[r] & k \ar[r] & 0 \ar[r] & 0 \ar[r] & \dots
}
\]
If $U^*$ is the kernel of $\mu$, then the sequence
\[
\xymatrix{
0 \ar[r] & U^* \ar[r] & \CV^* \ar[r]^{\mu} & \kk^* \ar[r] & 0
}
\]
is in $\SE$. Hence, all $\SE$-projective complexes are directs summands of
complexes having the form $\CV^* \otimes X^*$ for some complex $X^*$. It is
now an easy exercise to show that they have the stated form. The rest follows
from Theorem \ref{thm:tensor}.
\end{proof}

\begin{prop} \label{prop:V+Tsplitproj}
Assume that $V$ is free of finite rank as a $k$-module.
Let $\SE = \ts$+$\Vsplt$-$\seq$. The quotient category
$\CK_{\ts+\Vsplt}(\bC) =$ $\bC/\SE$-$\Proj$
is Frobenius category. The projective objects is the set
$\SE$-$\Proj$ consisting of all complexes that can be written as a
direct sum of an object in $\ts$-$\Proj$ and $\Vsplt$-$\Proj$.
\end{prop}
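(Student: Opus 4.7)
The plan is to apply Theorem~\ref{thm:tensor} with an $\SE$-projective cover of $\kk^*$ obtained by merging the two covers used in Propositions~\ref{prop:Vsplitproj} and~\ref{prop:VTsplitproj}. Set
\[
P(\kk)^* \;:=\; P_\ts(\kk)^* \,\oplus\, \CV^*,
\]
where $P_\ts(\kk)^*$ is the two-term identity complex displayed just after Theorem~\ref{thm:tensor} and $\CV^*$ has $V^\# \otimes V$ concentrated in degree $0$. Define the augmentation $\psi\colon P(\kk)^* \to \kk^*$ in degree $0$ by $\psi_0(\alpha, \xi) = \alpha + \Tr(\xi)$ (and zero elsewhere), so that the kernel complex $K^*$ has $K^0 \cong V^\# \otimes V$ sitting inside $P(\kk)^0$ as $\{(-\Tr(\xi), \xi)\}$ and $K^1 = k$.

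The main technical step is to verify that the short exact sequence $E\colon 0 \to K^* \to P(\kk)^* \to \kk^* \to 0$ lies in $\SE$. Term-splitness in degree $0$ is witnessed by the $kG$-section $\alpha \mapsto (\alpha,0)$, and is trivial elsewhere. For $V$-splitness, after tensoring with $V$ I would construct a chain-map section $s\colon V\otimes\kk^* \to V\otimes P(\kk)^*$ in degree $0$ by $s_0(v) = (0, t(v))$, where $t\colon V \to V\otimes V^\#\otimes V$ is the $kG$-linear section of $1\otimes \Tr$ produced by Lemma~\ref{lem:chain1}. Chain compatibility is immediate because the nontrivial part of the boundary of $V\otimes P(\kk)^*$ in degree $0$ comes only from $\partial_{P_\ts}$, which projects onto the first coordinate and therefore annihilates $(0,t(v))$. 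This is the step that genuinely uses both the trace-splitting of $\CV^*$ and the presence of the $\ts$-summand; it is the hardest part of the argument.

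The remaining hypotheses of Theorem~\ref{thm:tensor} are then routine. Condition~(a) holds because both $\ts$ and $\Vsplt$ are closed under tensors, and $P(\kk)^*$ visibly lies in $\CS(\cpxb(kG))$, giving~(b). For~(c) and~(e), the decomposition
\[
X^*\otimes P(\kk)^* \;=\; \bigl(X^*\otimes P_\ts(\kk)^*\bigr)\,\oplus\,\bigl(X^*\otimes \CV^*\bigr)
\]
exhibits the tensor product as a direct sum of a contractible complex and a $V$-projective; both are $\SE$-projective because any $\SE$-sequence is simultaneously $\ts$ and $\Vsplt$. Condition~(d) and the dual of~(c) are handled exactly as in Propositions~\ref{prop:Vsplitproj} and~\ref{prop:VTsplitproj}. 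Theorem~\ref{thm:tensor}(5) then yields the Frobenius property, and Theorem~\ref{thm:tensor}(2) identifies every $\SE$-projective as a direct summand of some $X^*\otimes P(\kk)^*$, i.e.\ of a direct sum of a $\ts$-projective and a $\Vsplt$-projective. The last subtlety is to upgrade ``summand of such a sum'' to the stated ``direct sum of a $\ts$-projective and a $\Vsplt$-projective,'' which uses the Krull-Schmidt property in the settings of interest (Propositions~\ref{thm:KS1} and~\ref{prop:KS2}) or, more generally, the idempotent completeness developed in Section~4.
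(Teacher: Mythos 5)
Your construction is exactly the paper's: the relative projective cover of $\kk^*$ is taken to be $\hat{\kk}^* \oplus \CV^*$ (your $P_\ts(\kk)^* \oplus \CV^*$ is the same complex up to reordering the two summands), the augmentation is $(\Id_k,\Tr)$, and the Frobenius property is deduced from Theorem~\ref{thm:tensor}. The paper simply asserts that $\mu$ is both term split and $V$-split; your construction of the chain-map section $s_0(v) = (0, t(v))$ and the check that the $P_\ts$-part of the boundary kills $(0,t(v))$ is a correct filling-in of that step, so your proof and the paper's coincide in substance.

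One small remark on your closing caveat. You are right that Theorem~\ref{thm:tensor}(2) only delivers that an $\SE$-projective is a direct \emph{summand} of $(X^*\otimes P_\ts(\kk)^*)\oplus (X^*\otimes\CV^*)$, whereas the Proposition states that it \emph{is} a direct sum of a $\ts$-projective and a $\Vsplt$-projective; this upgrade is not automatic. Krull--Schmidt does resolve it, since each indecomposable summand must then land in one of the two factors. But idempotent completeness alone does not: that guarantees idempotents split, not that a summand of $A\oplus B$ decomposes as (summand of $A$) $\oplus$ (summand of $B$). So for the non-Krull--Schmidt settings (e.g.\ $\Cpx(kG)$ over a general commutative ring), this last step would need a separate argument, and the paper's proof offers none --- it just asserts ``the projective objects are as stated.'' Your instinct to flag the gap is sound; the proposed fix should invoke Krull--Schmidt where available, not idempotent completeness.
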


\begin{proof}
Suppose that $\hat{\kk}^*$ is the complex with $k$ in degrees 0 and 1,
the map between them being the identity and all other terms equal to zero.
Let $\CV$ be the complex with $V^\# \otimes V$ in degree zero and
all other terms equal to zero. Then we have diagram
\[
\xymatrix{
\CV^* \oplus \ \hat{\kk}^* : \ar[d]^{\mu} & \dots 0 \ar[r] &
(V^\# \otimes V) \oplus k \ar[r]^{\qquad (0,\Id_k)} \ar[d]^{(\Tr,Id_k)} &
k \ar[r] \ar[d] & 0 \ar[r] & \dots \\
\kk^*: & \dots 0 \ar[r] & k \ar[r] & 0 \ar[r] & 0 \ar[r] & \dots
}
\]
The chain map $\mu$ is both $V$-split and term split, and the complex
$\CV^* \oplus \hat{\kk}^*$ is in $\ts$+$\Vsplt$-$\Proj$. Thus, the projective
objects are as stated. The rest of the proof proceeds as before.
\end{proof}


\section{Finite length complexes and idempotent completions} \label{sec:FL}
In this section we consider which of the categories that we have discussed
have idempotent completions, as well as investigate idempotent completion
property for homotopy categories of complexes of modules of finite length.
This discussion is crucial to the application of the Green correspondence
as formulated in Section \ref{sec:functor}.
The idempotent complete
property is a weak substitute for the Krull-Schmidt property in
categories. Let $X$ be an object in an additive
category $\CC$. An idempotent $e \in \Hom_{\CC}(X,X)$ is said to be split
provided $X$ is a direct sum $X = X^\prime \oplus {X^\prime}{}^\prime$ in
such a way that the restriction of $e$ to $X^\prime$ is the identity map
and to $X^{\prime}{}^\prime$ is zero. The category $\CC$ is idempotent
complete provided every idempotent splits in $\CC$. An abelian category
is idempotent complete simply because it has kernels in cokernels.
A triangulated category that has countable
direct sums is idempotent complete \cite{bokstedt-neeman}.  Of course,
any Krull-Schmidt category has idempotent completions.
With this information we can prove the following.

\begin{prop} \label{prop:idemcomp}
Assume that $k$ is a commutative ring and $G$ is a finite group.
The categories $\bC = \bC(kG)$ where $\bC$
is one $\Cpx$, $\Cpxp$, $\Cpxm$ or $\Cpxb$
are idempotent complete. So also are their homotopy categories
$\CK(\bC)$, $\CK_{\Vsplt}(\bC)$, $\CK_{V\thyph\ts}(\bC)$
and $\CK_{\ts+\Vsplt}(\bC)$ where
$V$ is a $kG$-module that is free of finite rank as a $k$-module.
\end{prop}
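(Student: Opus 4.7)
The proof naturally divides into two parts: the chain-complex level, where the categories are abelian and hence automatically idempotent complete, and the triangulated quotient level, where one leans on B\"okstedt--Neeman and on lifting arguments.

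For the chain-complex categories $\Cpx(kG), \Cpxp(kG), \Cpxm(kG), \Cpxb(kG)$: each is an abelian category, since kernels and cokernels of chain maps are computed termwise and manifestly preserve each boundedness condition. Any idempotent $e\in\End(X^*)$ therefore gives a splitting $X^* = \Im(e) \oplus \Ker(e)$, so these four categories are idempotent complete with no further work.

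For the homotopy and stable categories over the unbounded base $\bC = \Cpx(kG)$: the category $\Cpx(kG)$ has all small coproducts, formed termwise, and these descend to each of the quotients $\CK(\bC), \CK_{\Vsplt}(\bC), \CK_{V\thyph\ts}(\bC), \CK_{\ts+\Vsplt}(\bC)$, every one of which is triangulated as the stable category of a Frobenius exact category by the results of Section~\ref{sec:relproj}. Since each quotient inherits countable coproducts from the base, the B\"okstedt--Neeman theorem \cite{bokstedt-neeman} immediately yields idempotent completeness.

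The main obstacle is the bounded variants $\bC\in\{\Cpxp,\Cpxm,\Cpxb\}$, where countable coproducts generally fail and B\"okstedt--Neeman does not apply directly. Here the plan is to embed each bounded homotopy quotient as a full subcategory of its already-treated unbounded counterpart and show it is closed under direct summands in that ambient category. Given a homotopy idempotent $e\colon X^* \to X^*$ on a complex $X^*$ in the bounded variant, one uses that the ambient module category $\Mod(kG)$ is abelian, hence termwise idempotent complete, to \emph{strictify} $e$: by a twisted-complex (Karoubi enlargement) construction, the pair $(X^*,e)$ is replaced by a homotopy equivalent pair $(X^{\prime *}, e^\prime)$ with $e^\prime$ a genuine chain-level idempotent, and the construction enlarges each degree only by contributions from finitely many neighbouring degrees, so $X^{\prime *}$ remains in the same boundedness class. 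The subcomplex $\Im(e^\prime)\subset X^{\prime *}$ is then a bounded subcomplex that furnishes the splitting inside the bounded category. The same mechanism descends to the relative Frobenius quotients $\CK_{\Vsplt}$, $\CK_{V\thyph\ts}$, $\CK_{\ts+\Vsplt}$ over a bounded base, since the strictification is carried out at the chain level before quotienting by the relative projectives of Section~\ref{sec:relproj}. The principal technical checks are the termwise idempotent lifting and the preservation of boundedness under the strictification.
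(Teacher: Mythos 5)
Your treatment of the abelian categories $\Cpx(kG), \Cpxp(kG), \Cpxm(kG), \Cpxb(kG)$ and of the unbounded homotopy quotients via B\"okstedt--Neeman agrees exactly with the paper. The divergence, and the problem, is in how you handle the bounded variants.

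You assert that for $\bC \in \{\Cpxp, \Cpxm, \Cpxb\}$ ``B\"okstedt--Neeman does not apply directly'' and propose instead to strictify the homotopy idempotent $e$ on $X^*$ by a ``twisted-complex (Karoubi enlargement) construction'' that allegedly ``enlarges each degree only by contributions from finitely many neighbouring degrees'' and hence preserves boundedness. This is the entire technical content of your argument for the bounded case, and it is not established. The standard constructions that split homotopy idempotents at the chain level --- the mapping telescope of $X \xrightarrow{e} X \xrightarrow{e} \cdots$, or the associated one-sided twisted complex --- put a countably infinite direct sum of copies of $X$ (or of its shifts) in each degree; they do not produce a replacement built from only finitely many neighbouring degrees. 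Without an explicit construction that actually has the boundedness property you claim, there is a genuine gap here, and I am not aware of such a construction for a general abelian base. Your auxiliary plan of showing the bounded quotient is closed under summands inside the unbounded one is also circular: deciding that a retract of a bounded complex is again bounded up to homotopy is precisely the idempotent-splitting problem you are trying to solve.

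The paper's own argument is simpler and sidesteps all of this. The observation is that B\"okstedt--Neeman's proof never needs arbitrary countable coproducts: it only forms the homotopy colimit of $X \xrightarrow{e} X \xrightarrow{e} \cdots$, which uses the coproduct $\bigoplus_{n\ge 0} X$ of countably many copies of the \emph{same} complex $X$ and the cone of $1 - \mathrm{shift}$ on it. If $X$ lies in $\Cpxb(kG)$ (resp.\ $\Cpxp$, $\Cpxm$), so does $\bigoplus_{n\ge 0} X$, since all the summands share the same bounds and $\Mod(kG)$ has arbitrary direct sums; the cone is bounded as well. Thus B\"okstedt--Neeman \emph{does} apply directly, with no strictification needed. (This is also exactly the point where the finitely-generated categories $\cpx^b(kG)$ fall outside the statement: there the countable coproduct $\bigoplus_{n\ge 0} X$ has infinitely generated terms, which is why the paper handles the finite-length case separately and by a different argument.)
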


\begin{proof}
The categories denoted $\bC$ are abelian and therefore idempotent complete.
For the homotopy categories, the only problem is that
$\Cpxp(kG)$, $\Cpxm(kG)$ and $\Cpxb(KG)$ do not have arbitrary
direct sums and hence neither do their homotopy categories.
That is, the direct sum of an infinite number of bounded complexes
may no longer by bounded. However, the proof
\cite{bokstedt-neeman} of the existence of
a splitting for an idempotent requires a homotopy limit construction
that, in turn, requires only a direct sum of a countable number of
copies of the complex on which the idempotent is defined.
Such a direct sum exists in these category, and hence the splitting
of any idempotent exists.
\end{proof}

For the homotopy categories of complexes of finitely generated modules
the problem is more difficult. The categories $\cpx(kG)$, $\cpxp(kG)$,
$\cpxm(kG)$ and $\cpxb(kG)$ have idempotent completions because they
are abelian. If $k$ is a field, then
the same conclusion holds for the homotopy categories
$\CK(\cpxb(kG))$, $\CK_{\Vsplt}(\cpxb(kG))$,
$\CK_{\ts+\Vsplt}(\cpxb(kG))$ and $\CK_{V\thyph\ts}(\cpxb(kG))$
because they are Krull-Schmidt categories. These are part of a more
general collection of categories that are idempotent complete.

Let $\cpxFL(kG)$ denote the subcategory of $\cpx(kG)$, consisting of
all complexes with the property that every term in the complex has
finite composition length. That is, every term has a
composition series of finite length in which the quotients
of successive terms in the series are 
simple $kG$-modules. Similarly let $\cpxpFL(kG)$, $\cpxmFL(kG)$ and
$\cpxbFL(kG)$ be the categories of complexes of finite length modules
that are bounded above or below or just bounded.

Notice that $\cpxFL(kG)$ is a full subcategory of $\cpx(kG)$ and that
a complex $X^*$ in $\cpx(kG)$ is in $\cpxFL(kG)$ if and only if
every term $X^i$ has
finite length as a module over $k$. If $V$ is a $kG$-module that is free
of finite rank over $k$ then $V \otimes X^*$ is in $\cpxFL(kG)$ if and
only if $X^*$ is in $\cpxFL(kG)$. Moreover, $X^*$ is projective relative
to $V$ if and only if it is a direct summand of $X^* \otimes V \otimes V^\#$.

We are indebted to Jeremy Rickard for most of the proof of the following.

\begin{theorem} \label{thm:idemfinleng}
Let $\bC$ denote any of the $FL$ categories of complexes: $\cpxFL(kG)$,
$\cpxpFL(kG)$, $\cpxmFL(kG)$ or $\cpxbFL(kG)$. Then $\bC$ has idempotent
completions and so does any of the homotopy categories
$\CK(\bC)$, $\CK_{\Vsplt}(\bC)$, $\CK_{\ts+\Vsplt}(\bC)$
and $\CK_{V\thyph\ts}(\bC)$. We are assuming here that $V$ is a $kG$-module
that is free of finite rank over $k$.
\end{theorem}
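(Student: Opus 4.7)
My plan is to handle the abelian FL categories and the various homotopy categories in turn. First, each abelian FL category $\cpxFL(kG)$, $\cpxpFL(kG)$, $\cpxmFL(kG)$, $\cpxbFL(kG)$ is idempotent complete essentially by inspection: they are abelian subcategories of $\cpx(kG)$ closed under kernels and cokernels, since sub- and quotient modules of a finite length module remain finite length. In any abelian category the splitting of an idempotent $e$ is realized as $X^*=\ker(e)\oplus\ker(1-e)$, and each summand stays in the appropriate FL subcategory.

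Next, for the bounded homotopy category $\CK(\cpxbFL(kG))$, I would argue via the structure of endomorphism rings. For $X^*\in\cpxbFL$, each term $X^n$ is a finite length $kG$-module, and a Fitting-type result makes $\End_{kG}(X^n)$ semi-primary: its Jacobson radical is nilpotent and the quotient by it is semisimple Artinian. Since $X^*$ has finitely many nonzero terms, $\End_{\cpxb(kG)}(X^*)$ sits as a subring of the finite product $\prod_n\End_{kG}(X^n)$, cut out by chain-map compatibility, and inherits the semi-primary property; the further quotient by nullhomotopic endomorphisms giving the homotopy-category endomorphism ring preserves this. Semi-primary rings have the lifting-of-idempotents property modulo any ideal, so any idempotent in $\End_{\CK(\cpxbFL)}(X^*)$ is represented by a strict idempotent chain map $\tilde e$, and this splits inside the abelian category $\cpxbFL$ as above; the decomposition descends to $\CK$.

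The main obstacle is the unbounded case $\CK(\cpxFL)$ (and the one-sided bounded variants), where $\End_{\cpx}(X^*)$ is typically neither Artinian nor semi-primary. Here my plan, following Rickard's acknowledged contribution, is to endow $\End_{\cpx}(X^*)$ with the $I$-adic topology whose open neighborhoods of zero are the endomorphisms vanishing outside a given finite window of degrees, and to identify it, together with $\End_{\CK}(X^*)$, with the inverse limit of the semi-primary endomorphism rings of the brutal windowed truncations $X^*_{[M,N]}$. One then shows that such a pro-semi-primary completion retains enough lifting of idempotents for an abstract idempotent in $\End_{\CK}(X^*)$ to be promoted to a strict idempotent chain map, by choosing lifts compatibly across truncations. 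The abelian splitting in $\cpxFL$ then produces the desired decomposition, whose direct summands inherit the finite-length property termwise since summands of finite length modules remain finite length. The hard part is verifying that compatible lifts at successive truncation levels can be arranged without accumulating obstructions from nonstrict idempotency on the overlaps; this is the technical core addressed by Rickard's insight.

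Finally, the relative versions $\CK_{\Vsplt}(\bC)$, $\CK_{\ts+\Vsplt}(\bC)$ and $\CK_{V\thyph\ts}(\bC)$ are stable categories of the Frobenius structures provided by Propositions \ref{prop:Vsplitproj}, \ref{prop:VTsplitproj} and \ref{prop:V+Tsplitproj}, equivalently Verdier quotients of $\CK(\bC)$ by the thick subcategories of the respective relative projectives. These subcategories are closed under direct summands, thanks to the explicit descriptions of their projectives as summands of complexes of the form $X^*\otimes V^{\#}\otimes V$ or $X^*\otimes P(\kk)^*$-type objects. A standard Karoubi-envelope argument then shows that idempotent completeness of the Verdier quotient follows from that of the ambient $\CK(\bC)$ whenever the quotienting subcategory is closed under summands, finishing the proof.
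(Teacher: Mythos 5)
Your proposal diverges substantially from the paper's argument and contains two genuine errors, one of them fatal.

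The paper's proof is a single uniform Fitting-lemma argument that handles all of the homotopy categories at once. Given a chain map $e$ with $e^2-e$ nullhomotopic modulo the relevant relative projectives, one examines, degree by degree, the descending chain $e_iX^i\supseteq e_i^2X^i\supseteq\cdots$ and the ascending chain of kernels; both stabilize by the finite-length hypothesis, giving a strict decomposition $X^*=Y^*\oplus Z^*$ compatible with the boundary, with $e$ an isomorphism on $Y^*$ and nilpotent in each degree on $Z^*$. Writing the homotopy and the factoring map in matrix form shows that $e_Y$, $e_Z$ remain idempotent in the quotient, so $e_Y=\Id$ there; and because $e_Z$ is nilpotent degreewise, the geometric series $w=1+e_Z+e_Z^2+\cdots$ is a well-defined chain map, from which $e_Z=(e_Z-e_Z^2)w$ is seen to factor through a relative projective, hence is zero in the quotient. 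This never requires lifting an idempotent in the homotopy category to a strict idempotent chain map, which is precisely the step you flag as the unresolved ``hard part'' of your pro-semi-primary inverse-limit approach for the unbounded case. Your argument there is therefore incomplete exactly where the work needs to be done.

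Your treatment of the relative homotopy categories has a second, more serious gap. You assert that $\CK_{\Vsplt}(\bC)$, $\CK_{\ts+\Vsplt}(\bC)$, $\CK_{V\thyph\ts}(\bC)$ are Verdier quotients of $\CK(\bC)$ and that idempotent completeness passes to such quotients when the kernel is thick and closed under summands. Neither claim is correct. For $\CK_{\Vsplt}(\bC)$ the projectives being killed are direct summands of $X^*\otimes V^\#\otimes V$, which are typically not contractible, and conversely a contractible complex need not be $V$-projective; so $\CK_{\Vsplt}(\bC)$ is not a Verdier localization of $\CK(\bC)$ at all (the two are quotients of $\bC$ by incomparable classes of projectives). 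And even where a quotient description does hold, the principle you invoke is false: a Verdier quotient of an idempotent complete triangulated category by an idempotent complete thick subcategory can fail to be idempotent complete. The paper itself records this as a warning in the remark following Theorem~\ref{thm:greenderived}, citing Balmer--Schlichting. This is why the paper proves idempotent completeness directly via Fitting rather than attempting to inherit it categorically.

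The parts that do agree with the paper are the observation that the abelian categories $\cpxFL(kG)$, etc., are idempotent complete because they are abelian, and the general strategy for the bounded case via finiteness of endomorphism rings is plausible (though the paper does not take this route). To repair your proof you would need to replace both the unbounded-case sketch and the relative-versions argument with something like the paper's degreewise Fitting decomposition.
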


\begin{proof}
Note that the categories of complexes are abelian and hence also
idempotent complete. So we may assume that we are in one of the
homotopy categories which we call $\CK$.
Suppose that $X^*$ is an object in $\CK$ and that
$e: X^* \to X^*$ is a chain map such that $e^2 = e$ in $\CK$. That is,
$e^2$ is homotopic to $e$ in the sense that $e^2-e$ factors through the
appropriate relatively projective object. For each $i$, 
we have nested sequences of submodules
\[
e_iX^i \supseteq e_i^2X^i \supseteq e_i^3X^i \supseteq \dots
\]
and
\[
\Ker(e_i) \subseteq \Ker(e_i^2) \subseteq \Ker(e_i^3) \subseteq \dots
\]
Because $X^i$ has finite composition length, both sequences stabilize.
Let $Y^i$ and $Z^i$ be the limit modules. Then we have that for $n$
sufficiently large, depending on $i$,
$e_i^nY^i = Y^i$ and $e_i^nZ^i = \{0\}$. The
boundary map commutes with $e$, and hence, $X^* = Y^* \oplus Z^*$.
On the complex $Y^*$, $e$ acts as $e_Y$, an isomorphism, while on $Z^*$
it acts as $e_Z$ which is nilpotent in every degree.

Next we should note that both $e_Y$ and $e_Z$ are idempotent. That is,
the homotopy can be made to respect the decomposition of $X^*$ into a
direct sum. For example, suppose we are in the category
$\CK_{\ts+\Vsplt}(\bC)$. Then we have that
$e -e^2 = \partial d + d\partial + f$ where $d$
is a homotopy in the usual sense, $d_j:X^j \to X^{j-1}$,
and $f$ factors through a $V$-projective complex, which we can assume
to be $X^* \otimes V \otimes V^\#$. Put in matrix form we have that,
on $X^i$,
\[
\begin{bmatrix} e_Y - e_Y^2 & 0 \\
                  0   &  e_Z - e_Z^2 \end{bmatrix} =
\begin{bmatrix} d_{11}^{i+1} & d_{12}^{i+1} \\
 		d_{21}^{i+1} & d_{22}^{i+1} \end{bmatrix}
\begin{bmatrix} \partial_Y & 0 \\
                  0   & \partial_Z \end{bmatrix} +
\begin{bmatrix} \partial_Y & 0 \\
                  0   & \partial_Z \end{bmatrix}
\begin{bmatrix} d_{11}^i &d_{12}^i \\
                d_{21}^i & d_{22}^i \end{bmatrix} +
\begin{bmatrix} f_{11}^i &f_{12}^i \\
                f_{21}^i & f_{22}^i \end{bmatrix}
\]
Thus we see that $e_Y - e_Y^2 = d_{11}\partial_Y +
\partial_Yd_{11} + f_{11}$, and similarly for $e_Z$. Consequently,
in the category, $e_Y$ is the identity on $Y^*$, and $e_Z$ is both
idempotent and nilpotent on every term of  $Z^*$.
Now let $w = 1 + e_Z+ e_Z^2+ e_Z^3 + \dots$.
This is a chain map from $Z^*$ to itself. It is well defined
because on every term of $Z^*$ the sum is finite. On the other hand,
\[
e_Z  = (e_Z - e_Z^2) w = (d_{22}\partial_Z + \partial_Zd_{22} + f_{22})w
= d_{22}w\partial_Z +  \partial_Zd_{22}w + f_{22}w.
\]
Hence, $e_Z$ is the zero map in $\CK_{\ts+\Vsplt}(\bC)$. This proves that
the idempotent $e$ is split. The proof in the other categories is similar.
\end{proof}


\section{Acyclic complexes and localizations} \label{sec:acyclic}

In this section we introduce the derived categories that are the Verdier
localizations of the homotopy categories at thick subcategories of
acyclic complexes. Several variations on the standard theme are
discussed. It turns out that some different constructions yield the same
end object. The primary results concern the existence of the derived
categories and the idempotent completions.  Let $\bC$
be as in \ref{nota:cpx}.

We recall that a subcategory $\CL$ of a triangulated
category $\CC$ is {\it thick}
if it a full triangulated subcategory of $\CC$ and if it is closed under
taking direct summands. In this context, triangulated means close under the 
shift functor and if two of three objects in any triangle in $\CC$ are in 
$\CL$ then so is the third. The definition that we give has been called
Rickard's Criterion (see \cite{neeman}). It expresses precisely the conditions
needed to construct a Verdier localization of $\CC$ by inverting
any morphism such that third object in the triangle of that morphism
is also in subcategory $\CL$.

Given an exact category ($\CC, \CE)$, the subcategory
of acyclic objects $\FA(\CC, \CE)$ is the full subcategory in $\Cpx(\CC)$
consisting of all exact complexes of the form
\[
\xymatrix{
X^*: & \dots \ar[r] & X^{n-1} \ar[r] & X^n \ar[r] & X^{n+1} \ar[r] & \dots
}
\]
such that for every $n$, the map $X^n \to X^{n+1}$ decomposes
as a composition of an admissible epimorphism $X^n \to A^n$ with
an admissible monomorphism $A^n \to X^{n+1}$ where
$A^n \to X^{n+1} \to A^{n+1}$ is an exact sequence in $\CE$.

In a category $\bC$ of complexes over $kG$,
we are interested in some subcategories
of acyclic complexes. The first is the subcategory of all acyclic
complexes, where here acyclic means being
exact or simply having zero homology.
We denote it $\CA(\bC)$.
If $\bC = \Cpx(kG)$, it is an
easy check to see that this is the subcategory
$\FA(\Mod(kG), \seq(\Mod(kG))$
of all $kG$-modules where $\seq(\Mod(kG))$ denotes
all exact sequences.
If $\bC$ is $\cpx(kG)$, then the subcategory of acyclic complexes is
the collection
$\FA(\bfmod((kG), \seq(\bfmod(kG)))$ of the indicated exact category.

Conditions can be put on the types of acyclic complexes that are
acceptable. For example, let $V$ be a
fixed $kG$-module, which is free and finitely generated as a $k$-module,
and let
\[
\CA_{\Vsplt}(\Cpx(kG)) \ = \ \CA(\Mod((kG),
\Vsplt\thyph\seq(\Mod(kG)))
\]
where $\Vsplt\thyph\seq(\Mod(kG))$ is the collection
of $V$-split sequences of $kG$-modules.
In this case an acyclic object is a complex
of $kG$-modules that is both $V$-split and exact. Hence its
tensor product with $V$ is contractible.

Similarly, it is possible to define acyclic complexes
in the homotopy categories
$\CK(\bC)$, $\CK_{\Vsplt}(\bC)$ and $\CK_{\Vts}(\bC)$.
These are the classes of the corresponding
acyclic complexes in $\CA(\bC)$. That is, for example,
the objects in
$\CA(\CK_{\Vsplt}(\bC))$ are $V$-split-homotopy classes of
complexes in $\bC$ that are exact. .

\begin{prop} \label{prop:acyc1}

Assume that $V$ is free of finite rank as a module over $k$.
The subcategories  $\CA(\CK(\bC))$ and
$\CA_{\Vsplt}(\CK(\bC))$, are thick subcategories of
$\CK(\bC)$, for $\bC$ as in \ref{nota:cpx}.
\end{prop}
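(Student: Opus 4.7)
The plan is to verify, for each of the two subcategories, the two defining conditions of thickness: closure under the triangulated structure (shifts together with two-out-of-three in distinguished triangles) and closure under direct summands. For $\CA(\CK(\bC))$ I would reformulate membership as the vanishing of all homology groups $H^n(X^*)$. Closure under the shift is then immediate. For the two-out-of-three property I would invoke the standard fact that every distinguished triangle in $\CK(\bC)$ is isomorphic to a mapping cone triangle $X^* \to Y^* \to M(f)^* \to X^*[1]$, whose underlying short exact sequence $0 \to Y^* \to M(f)^* \to X^*[1] \to 0$ is termwise split and therefore yields a long exact sequence in homology. Vanishing of any two of $H^*(X^*), H^*(Y^*), H^*(M(f)^*)$ forces vanishing of the third. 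Closure under direct summands is immediate from the additivity $H^n(A^* \oplus B^*) = H^n(A^*) \oplus H^n(B^*)$.

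For $\CA_{\Vsplt}(\CK(\bC))$ the key step is a reformulation: a complex $X^*$ lies in this subcategory precisely when $V \otimes X^*$ is null-homotopic, equivalently zero in $\CK(\bC)$. This rests on the fact that a $V$-split short exact sequence becomes split upon tensoring with $V$, together with the observation that a complex of $kG$-modules is split exact if and only if it is contractible. Since $V$ is free of finite rank over $k$, the functor $V \otimes (-)$ preserves chain homotopies and commutes with shifts and with the mapping cone construction, so it descends to a triangulated endofunctor of $\CK(\bC)$. Once membership is encoded as vanishing under $V \otimes (-)$, all three thickness conditions reduce to the standard facts that the zero object in $\CK(\bC)$ is preserved by shifts, by cones between zero objects, and by direct summands.

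The genuinely routine parts are the long exact sequence argument and the additive behaviour of $V \otimes (-)$; the only mildly subtle point to watch is that for each variant of $\bC$ in Notation \ref{nota:cpx} (bounded, bounded above, bounded below, finitely generated terms) the mapping cone and direct summand constructions keep us inside $\bC$. This is automatic, since $M(f)^n = X^n \oplus Y^{n+1}$ and direct summands preserve all boundedness and finite-generation hypotheses imposed on the terms. I therefore do not expect any real obstacle; no machinery beyond what has already been developed in Sections \ref{sec:basiccats} and \ref{sec:relproj} should be needed.
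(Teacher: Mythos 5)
Your argument is correct. The paper disposes of this proposition with a single citation to Lemma~1.2 of Neeman's book, whereas you supply a self-contained direct verification, so the routes differ in style more than in substance. Two remarks on the content of your proposal.

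For $\CA(\CK(\bC))$, your argument (reformulate membership as vanishing of homology, use the termwise-split mapping-cone sequence to get the long exact sequence in homology, use additivity for summands) is precisely the standard proof underlying the cited lemma.

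For $\CA_{\Vsplt}(\CK(\bC))$, your reformulation is the genuinely useful step, and it is correct, but it is worth stating the converse direction explicitly since you are using it as an equivalence. If $V\otimes X^*$ is contractible, it is split exact over $kG$. Because $V$ is a \emph{nonzero} free $k$-module of finite rank, $V\otimes(-)$ is exact and faithful, so $H^n(V\otimes X^*)\cong V\otimes H^n(X^*)=0$ forces $H^n(X^*)=0$, and $Z^n(V\otimes X^*)=V\otimes Z^n(X^*)$ shows that each cycle sequence $0\to Z^n\to X^n\to Z^{n+1}\to 0$ is $V$-split. Hence $X^*$ is a $V$-split acyclic complex, so membership in $\CA_{\Vsplt}(\CK(\bC))$ is equivalent to $V\otimes X^*\cong 0$ in $\CK(\bC)$, i.e.\ $\CA_{\Vsplt}(\CK(\bC))$ is the kernel of the triangulated endofunctor $V\otimes(-)$ of $\CK(\bC)$, and kernels of triangulated functors are thick. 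One small imprecision: the fact that $V\otimes(-)$ descends to a triangulated endofunctor of $\CK(\bC)$ needs only that $V\otimes(-)$ is additive on modules (hence preserves homotopies, shifts, and cones); the freeness of $V$ over $k$ is what you need for the faithfulness/exactness step in the reformulation, not for the descent. The implicit standing assumption $V\neq 0$ should also be flagged, since for $V=0$ the kernel characterisation would trivially give all of $\CK(\bC)$.
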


\begin{proof}
See Lemma 1.2 of \cite{neeman}.
\end{proof}

There is warrant for some care here.
It seems that the subcategory $\CA(\CK_{\Vsplt}(\bC))$ and
$\CA_{\Vsplt}(\CK_{\Vsplt}(\bC))$ are {\em not} thick subcategories
of $\CK_{\Vsplt}(\bC)$, in general. The problem here is that the notion
of acyclic is muddled. One can consider the image of the subcategory of
acyclic complexes of $\bC$ in the homotopy category. But the relative
projective $\Proj(\Vsplt$-$\seq)$ are not acyclic in the usual sense of
being exact. Hence, there exist
zero objects in the homotopy category that are not acyclic
in the usual sense.
Still we can prove the following.

\begin{prop} \label{prop:acyc2}
Assume that $V$ is free of finite rank as a $k$-module.
The subcategories $\CA(\CK_{\Vts}(\bC))$ and
$\CA_{\Vsplt}(\CK_{\Vts}(\bC))$ are thick subcategories
$\CK_{\Vts}(\bC)$.
\end{prop}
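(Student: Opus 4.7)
The plan is to parallel the proof of Proposition \ref{prop:acyc1} by exploiting the key feature that distinguishes this setting from the pathological one flagged in the remark preceding the proposition: by Proposition \ref{prop:VTsplitproj}, every $V\thyph\ts\thyph\seq$-projective is a split exact complex of $V$-projective modules, hence it is genuinely acyclic, and tensoring such a complex with $V$ again yields a sum of identity two-term complexes (so it is also $V$-split). Consequently, the zero objects of $\CK_{\Vts}(\bC)$ are acyclic and $V$-split acyclic, and both notions descend cleanly to well-defined full subcategories of $\CK_{\Vts}(\bC)$.

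First I would check well-definedness on isomorphism classes. If $X^* \cong Y^*$ in the Frobenius stable category $\CK_{\Vts}(\bC)$, then a standard argument produces $V\thyph\ts\thyph\seq$-projectives $P, Q$ with $X^* \oplus P \cong Y^* \oplus Q$ in $\bC$. Since $P$ and $Q$ are acyclic (respectively, $V$-split acyclic), the property transfers from $X^*$ to $Y^*$.

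Next I would verify the thickness criterion of Lemma 1.2 of \cite{neeman}. Closure of both subcategories under the shift functor and under direct summands is immediate, since homology commutes with shifts and direct sums and $V$-splitness is preserved by these operations. For the triangle axiom, a triangle in $\CK_{\Vts}(\bC)$ is, up to isomorphism, induced from a $V\thyph\ts\thyph\seq$-sequence $0 \to A^* \to B^* \to C^* \to 0$ in $\bC$, and being an exact sequence of complexes it yields a long exact sequence in homology via the snake lemma; so if two of $A^*, B^*, C^*$ are acyclic, so is the third. For the $V$-split refinement, tensoring with $V$ produces a term split short exact sequence $0 \to V \otimes A^* \to V \otimes B^* \to V \otimes C^* \to 0$, which is therefore a distinguished triangle in the homotopy category of complexes of $k$-modules; if two of its vertices are contractible, so is the third, yielding $V$-splitness of the remaining complex.

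The main obstacle is the conceptual one addressed in the preliminary paragraph: one must verify that $V\thyph\ts\thyph\seq$-projectives are acyclic (and $V$-split) so that the very notion of acyclicity survives the quotient in the way that it does not for $\CK_{\Vsplt}(\bC)$. Once this is in hand, the verification is essentially a repetition of the argument of Proposition \ref{prop:acyc1}.
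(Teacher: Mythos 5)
Your proof is correct, and it is genuinely different from the paper's in how it handles the $V$-split case. Both you and the paper start from the same pivotal observation (drawn from Proposition~\ref{prop:VTsplitproj}) that the $\Vts$-projectives are acyclic split complexes of $V$-projective modules, which makes the acyclicity notions descend to the quotient. For $\CA(\CK_{\Vts}(\bC))$ your argument and the paper's coincide. Where you diverge is $\CA_{\Vsplt}(\CK_{\Vts}(\bC))$: the paper takes the explicit pushout defining the triangle of a map $f:X^*\to Y^*$, tensors the whole diagram with $V$, and exhibits a splitting of the top row directly by decomposing $\CV^*\otimes V$ into a summand of the form $\dots 0\to V\to V\to 0\dots$ that carries the image of the monomorphism; the bottom row then splits as a pushout. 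You instead reduce to a two-out-of-three statement for $\Vts$-sequences, tensor the sequence with $V$ to get a term-split short exact sequence of complexes (this is precisely the definition of $\Vts$), read it as a distinguished triangle in the ordinary homotopy category, and invoke the triangulated axiom that two contractible vertices force the third. Your route is more conceptual and avoids the explicit splitting computation; the paper's is more concrete and shows a bit more (the tensored sequence actually splits as a sequence of complexes, not just that the third vertex is contractible). Two small caveats: the triangle carried by the term-split sequence lives in $\CK(kG)$, not $\CK(k)$ as you wrote, although this does not affect the argument; and your reduction to a $\Vts$-sequence requires one extra rotation-plus-dimension-shift step (to pass from the objects $X^*,Y^*$ of the original triangle to the objects $Y^*,\Omega^{-1}_{\SE}(X^*)$ of the defining sequence), which is implicitly supplied by closure under shift and by applying your two-out-of-three once to the sequence $0\to X^*\to I^*\to\Omega^{-1}_{\SE}(X^*)\to 0$, but it would be worth spelling out.
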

	
\begin{proof}
Because $\Proj(\Vts$-$\seq)$ consists of acyclic complexes (see Proposition
\ref{prop:VTsplitproj}), the subcategory $\CA(\CK_{\Vts}(\bC))$ is
triangulated. It is clear that it is closed under taking direct summands,
and hence, it is a thick subcategory.

The problem left is to show that the category $\CA_{\Vsplt}(\CK_{\Vts}(\bC))$
is triangulated.
Supppose that $X^*$ and $Y^*$ are $V$-split acyclic complexes and that
$f: X^* \to Y^*$ is a chain map. We need to show that the third object in
the triangle of $f$ is $V$-split and acyclic. The third object is
the object $B^*$, given by the diagram:
\[
\xymatrix{
\SE: \quad 0 \ar[r] & X^* \ar[r]^{\alpha} \ar[d]^{f} & X^* \otimes \CV^* \ar[r] \ar[d] &
\Omega^{-1}_{\SE}(X^*) \ar[r] \ar@{=}[d] & 0 \\
0 \ar[r] & Y^* \ar[r]  & B^* \ar[r] &
\Omega^{-1}_{\SE}(X^*) \ar[r] & 0
}
\]
where $\SE$ is in  $V$-$\ts$-$\seq$ and $B^*$ is the pushout in the left square.
Here $\CV^*$ is given in the proof of Proposition \ref{prop:VTsplitproj}.
To see that $B$ is a $V$-split acyclic complex, begin by tensoring the
entire diagram with $V$ and consider the upper row which is the exact sequence
\[
\xymatrix{
0 \ar[r] & X^* \otimes V \ar[r]^{\alpha \quad} &
X^* \otimes \CV^* \otimes V \ar[r] &
\Omega^{-1}_{\SE}(X^*) \otimes V \ar[r] & 0
}
\]
Now recall that the map $k \otimes V \to V \otimes V^\# \otimes V$ is
split. So the complex $\CV^* \otimes V$ is a direct sum of two
complexes one of which has the form
\[
\xymatrix{
\dots \quad 0 \ar[r] & V \ar[r] & V \ar[r] & 0 \quad \dots
}
\]
where the nonzero terms occur in degree -1 and 0. Moreover, this summand
contains the image of the chain map $\alpha$. Thus the first part of the
upper row of the previous diagram looks like
\[
\xymatrix{
& \dots \quad 0 \ar[r] & X^* \otimes V \ar[d]^\alpha \ar[r] & 0 \quad \dots \\
\dots \quad 0 \ar[r] & X^* \otimes V \ar[r]^{1 \otimes 1} &
X^* \otimes V \ar[r] & 0 \quad \dots
}
\]
Because $X^* \otimes V$ is a split sequence, a straightforward exercise shows
that the chain map $\alpha$ is also split. Consequently, the top row of the
original diagram when tensored with $V$ is a split sequence of complexes,
and the bottom row, being the pushout of the top row, when tensored with
$V$ is also split. Hence $B$ is $V$-split as asserted.

Thus we have that $\CA_{\Vsplt}(\CK_{\Vts}(\bC))$ is triangulated. It is
obviously closed under taking direct summands. Hence, it is a thick subcategory
of $\CK_{\Vts}(\bC).$
\end{proof}

If the category $\CA$ is the thick subcategory of
acyclic complexes in a homotopy
category $\CC$ of complexes, then the derived category $\CD_{\CA}(\CC)$
is the Verdier quotient or localization of $\CC$ at $\CA$. The objects
in the derived category are the same as those in $\CC$. But the morphisms
between two objects $X^*$ and $Y^*$ are obtained by inverting any
morphism such that the third object in the triangle of that morphism
is in $\CA$. Such a morphism is called a quasi-isomorphism.
Thus a morphism is a composition $g^{-1}f$ as in the
diagram
\[
\xymatrix{
X^* \ar[r]^f & Z^* & Y^* \ar[l]_g
}
\]
where the third object in the triangle containing $g$ is in $\CA$.

We use the notation $\CD_a(\CK_{\CX}(\bC))$ to mean the derived category of
$\CK_{\CX}(\bC)$ for $\CX$ one of $\ts$ or $V$-$\ts$,  with respect
to the subcategory of acyclic complexes $\CA_a$. Here $a$ is either
$-$ (blank)  or $\Vsplt$, meaning that
either all acyclic complexes or the $V$-split
acyclic complexes. Thus $\CK_{\Vts}(\Cpxb(kG))$ means
the quotient category $\Cpxb(kG))/(V$-$\ts$-$\Proj)$ of bounded complexes of
$kG$-modules by the projectives of the exact category
($\Cpxb(kG)$,$V$-$\ts$-$\seq$), and $\CD_{\Vsplt}(\CK_{\Vts}(\Cpxb(kG)))$
is its localization by inverting any map such that the third object
of the triangle of that map is an acyclic complex that splits on
tensoring with $V$. As we see below, the notation can be
simplified even more. Indeed, there is some contraction in the list
of derived categories.

\begin{prop} \label{prop:tsequiv}
Assume that $V$ is free of finite rank as a $k$-module.
Let $\bC$ be as in \ref{nota:cpx}. The natural
functor $\CK_{\Vts}(\bC) \to \CK(\bC)$ induces equivalences on the derived
categories $\CD(\CK_{\Vts}(\bC)) \to \CD(\CK(\bC))$ and
$\CD_{\Vsplt}(\CK_{\Vts}(\bC)) \to \CD_{\Vsplt}(\CK(\bC))$.
Moreover, these are triangular equivalences.
\end{prop}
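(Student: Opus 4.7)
The plan is to construct $F$ as a triangulated functor that descends to the derived categories, and then to build its inverse via the universal property of Verdier localizations. First, since the $V\thyph\ts\thyph\Proj$-objects are contained in the null-homotopic complexes, the quotient $\bC \to \CK(\bC)$ factors through $\CK_{\Vts}(\bC)$, giving a natural, full, essentially surjective functor $F: \CK_{\Vts}(\bC) \to \CK(\bC)$ that is the identity on objects. This $F$ is triangulated because every chain map produces the same mapping cone triangle in both categories---the mapping cone sequence is term-split, hence also $V$-$\ts$-split---and in each stable category every distinguished triangle is isomorphic to a mapping cone triangle. Because $F$ is the identity on underlying complexes, it sends acyclic complexes to acyclics and $V$-split acyclics to $V$-split acyclics, so it descends to triangulated functors $\bar F: \CD(\CK_{\Vts}(\bC)) \to \CD(\CK(\bC))$ and $\CD_{\Vsplt}(\CK_{\Vts}(\bC)) \to \CD_{\Vsplt}(\CK(\bC))$.

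For the inverse, the key observation is that every null-homotopic complex in $\bC$ lies in both $\CA(\bC)$ and $\CA_{\Vsplt}(\bC)$: such a complex is a direct sum of two-term identity complexes $0 \to W \to W \to 0$, which are split exact over $k$ and remain split exact after tensoring with the $k$-free module $V$. Hence any null-homotopic chain map $f: X^* \to Y^*$ factors through an acyclic, $V$-split acyclic complex; explicitly, if $f = \partial h + h\partial$ for a homotopy $h$, then $f$ factors as $X^* \xrightarrow{\sigma} M(\id_{X^*}) \xrightarrow{\tau} Y^*$ with $\sigma(x) = (x, 0)$ and $\tau(x, y) = f(x) + h(y)$, and $M(\id_{X^*})$ is contractible. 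Consequently, the composition $\bC \to \CK_{\Vts}(\bC) \to \CD(\CK_{\Vts}(\bC))$ annihilates null-homotopic morphisms and so factors through $\CK(\bC)$; since it already sends quasi-isomorphisms to isomorphisms, it factors further through $\CD(\CK(\bC))$, producing a triangulated functor $G: \CD(\CK(\bC)) \to \CD(\CK_{\Vts}(\bC))$. The same argument applied to $\CA_{\Vsplt}$ everywhere yields the inverse in the $V$-split case.

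Both compositions $\bar F \circ G$ and $G \circ \bar F$ act as the identity on objects and on classes of chain maps, as each is induced by the identity functor on $\bC$; hence they are themselves the identity, so $\bar F$ and $G$ are mutually inverse triangulated equivalences. The main obstacle to verify rigorously is that $F$ preserves distinguished triangles: this reduces to the standard fact that in a Frobenius stable category every triangle is isomorphic to one coming from the exact structure, together with the observation that term-split sequences are automatically $V$-$\ts$-split.
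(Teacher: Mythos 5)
Your core observation matches the paper's: the $\Vts$-projectives and $\ts$-projectives are all contractible, hence acyclic, so they are annihilated upon passing to either derived category; the equivalence of additive categories then follows from the universal properties of the two-step construction $\bC \to \CK_{\ast}(\bC) \to \CD(\CK_{\ast}(\bC))$. The explicit construction of the inverse $G$ is a more detailed version of the paper's terse remark, and that part is sound.

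However, the claim that $F\colon \CK_{\Vts}(\bC)\to\CK(\bC)$ is a triangulated functor is incorrect, and the supporting assertion that ``in each stable category every distinguished triangle is isomorphic to a mapping cone triangle'' fails in $\CK_{\Vts}(\bC)$. The suspension in $\CK_{\Vts}(\bC)$ is $\Omega^{-1}_{\Vts}$, the cokernel of the $\Vts$-injective hull $X^* \to \CV^{\#*}\otimes X^*$, and this conflation is only $\Vts$-split, \emph{not} term-split (for $\kk^*$, the degree-$0$ term of $E_{\kk}$ is $0\to\ker\Tr\to V^\#\otimes V\to k\to 0$, which splits iff the trace map splits). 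Consequently $\Omega^{-1}_{\Vts}(X^*)$ is not isomorphic to $X[1]^*$ in $\CK(\bC)$ in general, so $F$ does not commute with suspension and cannot be triangulated; the identity functor on $\bC$ is exact from $(\bC,\ts\text{-}\seq)$ to $(\bC,\Vts\text{-}\seq)$ but not in the direction you need, and it fails to preserve projectives either way. What rescues the proposition is that the comparison map $X[1]^* \to \Omega^{-1}_{\Vts}(X^*)$, induced by the injective $\CV^{\#*}\otimes X^*$, has acyclic cone, so the two suspensions agree \emph{after} localizing at acyclics. That is the precise content of the paper's ``exercise for the reader,'' and your argument needs to be recast there: show the suspensions become naturally isomorphic in the derived categories and verify triangle-preservation at that level, rather than for $F$ itself.
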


\begin{proof}
Let $\ts_k\thyph\seq$ be the collection of sequence that are term split on
restriction to $k$. By Frobenius Reciprocity (see Theorem
\ref{thm:std-reptheory}(b)), $\ts_k\thyph\seq = kG\thyph\ts\thyph\seq$.
We have a sequence of subcategories
\[
\ts_k\thyph\seq\thyph\Proj \subseteq \Vts\thyph\seq\thyph\Proj \subseteq
\ts\thyph\seq\thyph\Proj,
\]
leading to a sequence of functors
\[
\xymatrix{
\CK_{\ts_k}(\bC) \ar[r] & \CK_{\Vts}(\bC) \ar[r] &
\CK_{\ts}(\bC).
}
\]
It is well known that the composition of the two functors induces an
equivalence between $\CK_{kG\thyph\ts}(\bC)$ and $\CK(\bC)$. The point is that
the projectives of each of the exact categories in question are acyclic
complexes by \ref{prop:VTsplitproj}. Any acyclic complex is quasi-isomorphic
to the zero complex and hence becomes the zero object in the derived
category. So for example, if a map between complexes factors through an
element of $\Proj(\Vts\thyph\seq)$, then it becomes the zero map in the
derived category. The same argument applies to prove that the first
statement of the theorem.

The equivalence
$\CD_{\Vsplt}(\CK_{\Vts}(\bC)) \to \CD_{\Vsplt}(\CK(\bC))$
is proved similarly. The fact that these functors induce triangle equivalences
is an exercise that we leave to the reader.
\end{proof}

\begin{theorem} \label{thm:derivedic}
All of the derived categories $\CD_a(\CK_b(\bC))$, that we have considered
and have countable direct sums or allow the
direct sum of a countable number of copies
of any object,  have idempotent completions.
\end{theorem}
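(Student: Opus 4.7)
The plan is to reduce everything to the B\"okstedt--Neeman theorem \cite{bokstedt-neeman}, already invoked in Proposition \ref{prop:idemcomp}: a triangulated category in which countable coproducts of copies of any object exist is automatically idempotent complete, with the splitting realized through the telescope $X \to X \to X \to \cdots$ whose structure maps are all a given idempotent $e$. Crucially this requires only countable coproducts of copies of a single object, not arbitrary countable coproducts.

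First I would observe that each derived category $\CD_a(\CK_b(\bC))$ under consideration is the Verdier localization of a triangulated homotopy category at a thick subcategory of acyclic complexes (see Propositions \ref{prop:acyc1} and \ref{prop:acyc2}), and hence is itself triangulated.

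Next, by hypothesis, the ambient homotopy category $\CK_b(\bC)$ either admits arbitrary countable coproducts or at least admits the countable coproduct of copies of any single object. The main step is to verify that the thick subcategory of acyclics being inverted is closed under such coproducts, so that they descend through the triangulated quotient functor to the derived category. For the ordinary acyclic subcategory $\CA$, closure is immediate: exactness of a complex is a termwise property and a direct sum of exact sequences of $kG$-modules is exact. For the $V$-split acyclic subcategory $\CA_{\Vsplt}$, closure follows because $V \otimes (-)$ commutes with arbitrary direct sums (as $V$ is free of finite rank over $k$), and a direct sum of split short exact sequences of $k$-modules is again split.

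The principal obstacle, and really the only one, is this closure verification; once it is in place, the quotient functor sends a countable coproduct of copies of $X$ in $\CK_b(\bC)$ to a countable coproduct of copies of its image in $\CD_a(\CK_b(\bC))$, and the B\"okstedt--Neeman theorem immediately yields that every idempotent in $\CD_a(\CK_b(\bC))$ splits.
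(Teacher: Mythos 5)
Your proposal is correct and takes essentially the same approach as the paper: the paper's proof is the one-line statement ``This follows from \cite{bokstedt-neeman}, because the infinite categories have countable coproducts,'' and your argument is simply a fuller version of that, correctly identifying and verifying the one non-trivial point --- that the thick subcategory of acyclics being inverted is closed under the relevant countable coproducts, so that coproducts descend through the Verdier quotient and B\"okstedt--Neeman applies to $\CD_a(\CK_b(\bC))$ itself.
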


\begin{proof}
This follows from \cite{bokstedt-neeman}, because the infinite categories have
countable coproducts (direct sums).
\end{proof}

\begin{theorem} \label{thm:KSbd}
Suppose that $k$ is a field.
The category $\CD^b(kG) = \CD(\CK(\cpxb(kG)))$ is a Krull-Schmidt category.
\end{theorem}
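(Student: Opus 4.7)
The plan is to identify $\CD^b(kG)$ with the homotopy category $\CP$ of bounded-above complexes of finitely generated projective $kG$-modules having bounded cohomology, and then verify Krull-Schmidt via two properties: Hom-finiteness and idempotent completeness. Since $k$ is a field and $kG$ is a finite-dimensional $k$-algebra, every object of $\bfmod(kG)$ admits a minimal projective resolution by finitely generated projectives, hence every object of $\cpxb(kG)$ admits a projective resolution in $\CP$. The standard argument then yields a triangulated equivalence $\CP \to \CD^b(kG)$.

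For Hom-finiteness over $k$, take $P^*, Q^* \in \CP$ with $Q^*$ having cohomology concentrated in degrees $[a,b]$. The good truncation $\tau^{\ge a-1} P^*$ is quasi-isomorphic to $P^*$ and pairs nontrivially with $Q^*$ in only finitely many degrees. Since each $\Hom_{kG}(P^i, Q^i)$ is a finite-dimensional $k$-space, $\Hom_{\CD^b(kG)}(P^*, Q^*)$ is finite-dimensional.

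For idempotent completeness I would exploit minimality: every object of $\CD^b(kG)$ has a minimal representative $P^* \in \CP$, unique up to isomorphism, whose differentials land in the radical at each term. A standard fact about minimal resolutions states that any null-homotopic endomorphism of $P^*$ is strictly radical. Given an idempotent $e \in \End_{\CD^b(kG)}(P^*)$ represented by a chain map $\tilde e$, the defect $\tilde e^2 - \tilde e$ is therefore strictly radical, and a standard lifting argument modifies $\tilde e$ by a suitable homotopy to produce a chain-level idempotent $e'$ with the same class. Its image in each degree is a direct summand of a finitely generated projective $kG$-module, hence itself a finitely generated projective, so the image complex lies in $\CP$ and gives the splitting in $\CD^b(kG)$.

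Combining the two properties: a Hom-finite, idempotent-complete additive category over a field is Krull-Schmidt, since the endomorphism ring of any indecomposable is then a finite-dimensional $k$-algebra with no nontrivial idempotents and hence local. The context provided by Proposition \ref{prop:KS2}, which establishes that $\CK(\cpxb(kG))$ is itself Krull-Schmidt, is useful but does not transfer automatically under Verdier localization. The main obstacle is precisely the idempotent-completeness step, because Verdier quotients need not preserve splittings of idempotents; lifting an abstract idempotent to a chain-level idempotent on a minimal projective resolution and analyzing its image is the delicate part, complicated by the fact that these resolutions are unbounded below.
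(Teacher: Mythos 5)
Your proof is correct and, at the top level, follows the same route as the paper: establish that $\CD^b(kG)$ is Hom-finite and idempotent complete, then invoke the standard fact that a Hom-finite idempotent-complete $k$-linear category over a field is Krull-Schmidt. Where you differ is in the justification of idempotent completeness, which the paper simply asserts (the relevant citation is the result of Balmer and Schlichting \cite{balmer-schlichting}, quoted in a remark after Theorem \ref{thm:greenderived}, that the bounded derived category of an idempotent-complete exact category is idempotent complete). Your argument instead passes to the model $\CK^{-,b}(\proj kG)$ and works directly with minimal complexes. That route is sound, and it is worth noting exactly why the unboundedness below does not spoil the lifting: if $P^*$ is minimal, the ideal $I$ of null-homotopic endomorphisms of $P^*$ satisfies $I_i \subseteq \Hom_{kG}(P^i, \rad P^i) = \rad \End_{kG}(P^i)$ in every degree, and since $\rad(kG)^N = 0$ for some fixed $N$ one gets $I^N = 0$ \emph{uniformly in the degree}. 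Thus $I$ is a genuinely nilpotent two-sided ideal of the full endomorphism ring of the complex, idempotents lift along $\End_{\Cpx}(P^*) \twoheadrightarrow \End_{\CK}(P^*)$, and the lifted chain-level idempotent splits in the abelian category $\cpx(kG)$ with image again a bounded-above complex of finitely generated projectives with bounded cohomology. Your approach is more hands-on and exploits the finite-dimensionality and nilpotent radical of $kG$, whereas the Balmer--Schlichting route is more general; both are valid, and your version supplies the detail the paper's proof elides.

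One small caution on the Hom-finiteness step: the good truncation $\tau^{\ge a-1}P^*$ is no longer a complex of projectives, so one cannot simply compute $\Hom$ in the homotopy category against $Q^*$ directly. The cleanest fix is to run the argument on the other variable: replace $Y$ by a bounded complex and $X$ by a bounded-above projective resolution $P^*$ with $P^i = 0$ for $i > b$, then observe that any chain map $P^* \to Q^*$ vanishes in degrees where both complexes vanish, and a dimension-shift or dévissage on the finitely many cohomology modules gives finite-dimensionality. This is standard and your conclusion is correct, but the truncation as written needs this adjustment.
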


\begin{proof}
It is straightforward to show that $\CD^b(kG)$ is a hom-finite category,
and it has idempotent completions. Hence, if $X^*$ is a complex
in $\CD^b(kG)$, then its endomorphism ring is a finite dimensional
$k$-algebra that has a complete collection of primitive idempotents. Thus it
has a complete collection of primitive idempotents that sum to the identity.
This provides a decomposition of $X^*$ into indecomposable subcomplexes.
The uniqueness of the decomposition can be proved from the structure of
the endomorphism ring.
\end{proof}

Finally, we should note that all of the categories that have been discussed
respect the block structure of $kG$. The group algebra can be written as
a direct sum of indecomposable two-sided ideals $kG = B_1 \oplus \dots
\oplus B_n$. Each $B_i$ contains an idempotent $e_i$ which acts as the
identity of $B_i$, so that $B_i = e_i kG$
and $e_i = e_j$ for $j \neq i$. If $X$ is a $kG$-module or
complex of $kG$-modules then $X = \oplus_{i} e_iX$, and we say that $e_iX$ is
in the block $B_i$. There are no nonzero homomorphisms between modules or
complexes that are in different blocks.

For the record, we state the following.

\begin{theorem} \label{thm:blockcats}
  Suppose that $B$ is a block of $kG$. Let $\bC(B)$ be any of the
  categories of complexes in \ref{nota:cpx} or any of the categories of
 complexes of finite length modules as in Section \ref{sec:FL},
  restricted to modules in the block $B$.
The homotopy categories $\CK(\bC(B))$, $\CK_{\Vsplt}(\bC(B))$ and
$\CK_{\Vts}(\bC(B))$ as well as the derived categories
$\CD(\CK(\bC))$ and $\CD_{\Vsplt}(\CK(\bC(B)))$ are triangulated
categories. These categories have idempotent completions
provided they have countable direct sums or permit the countable
direct sum of an object with itself, or have objects that are complexes
of finite length modules.  If $k$ is a field, then
the categories $\cpxb(B)$, $\CK(\cpxb(B))$, $\CK_{\Vsplt}(\cpxb(B))$,
$\CK_{\Vts}(\cpxb(B))$, $\CD(\CK(\cpxb)) = \CD^b(B)$, and
$\CD_{\Vsplt}(\CK(\cpxb(B)))$ are Krull-Schimdt categories.
\end{theorem}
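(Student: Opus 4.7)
The plan is to reduce everything to the corresponding results already established for $\bC(kG)$ by exploiting the block decomposition $kG = B_1 \oplus \dots \oplus B_n$. Since the block idempotents $e_1,\dots,e_n$ are central and orthogonal with $\sum e_i = 1$, every object $M$ of $\Mod(kG)$ splits as $M = \bigoplus_i e_i M$, and the same holds termwise for complexes. Because $\Hom_{kG}(e_iX,e_jY)=0$ for $i\neq j$, this gives a categorical product decomposition
\[
\bC(kG) \;\cong\; \bC(B_1)\times\dots\times\bC(B_n),
\]
and analogously for $\cpxFL$ versions. Thus $\bC(B)$ is a direct factor of $\bC(kG)$.

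The first step is to check that each of the relevant pieces of structure respects this factorization: admissible monomorphisms, admissible epimorphisms and exact sequences in any of the classes $\seq$, $\tsseq$, $\Vsplt\thyph\seq$, $\Vts\thyph\seq$, and $\ts{+}\Vsplt\thyph\seq$ decompose blockwise, and the corresponding relative projectives also decompose blockwise. The one point needing care is the $V$-relative structure: tensoring with $V$ does not preserve blocks, but if $Y^* \in \bC(kG)$ and $X^* \in \bC(B)$ is a summand of $V\otimes Y^*$, then $X^* = e_B \cdot X^*$ is a summand of $e_B(V\otimes Y^*) = V\otimes (e_B Y^*_?)$ after decomposing suitably; what actually matters is that the $\SE$-projective cover $E_{\kk}$ from Theorem \ref{thm:tensor} decomposes blockwise (as its terms $V^\#\otimes V$ and $\kk^*$ do), so the Frobenius structure on $(\bC(kG),\SE)$ restricts to a Frobenius structure on $(\bC(B),\SE\cap \bC(B))$. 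Consequently the homotopy quotients $\CK(\bC(B))$, $\CK_{\Vsplt}(\bC(B))$, $\CK_{\Vts}(\bC(B))$ are the block components of the corresponding homotopy quotients over $kG$, and in particular they are triangulated.

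Next, for the derived categories, the subcategories $\CA(\CK(\bC(B)))$ and $\CA_{\Vsplt}(\CK_{\Vts}(\bC(B)))$ are the intersections with $\bC(B)$ of the thick subcategories treated in Propositions \ref{prop:acyc1} and \ref{prop:acyc2}, hence remain thick. The Verdier localization commutes with the product decomposition, so $\CD(\CK(\bC(B)))$ and $\CD_{\Vsplt}(\CK(\bC(B)))$ are block components of the corresponding derived categories over $kG$ and are therefore triangulated.

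For idempotent completeness, direct factors of idempotent complete categories are idempotent complete, so Proposition \ref{prop:idemcomp}, Theorem \ref{thm:idemfinleng} and Theorem \ref{thm:derivedic} transfer to $\bC(B)$: whenever the ambient $\bC$ admits countable direct sums (or countable self-sums of each object), or consists of finite length complexes, the block component inherits the same property and hence the Bökstedt--Neeman argument or the argument of Theorem \ref{thm:idemfinleng} applies verbatim. Finally, for $k$ a field, hom-finiteness of $\cpxb(B)$ and its homotopy and derived quotients follows from hom-finiteness of the ambient category (Proposition \ref{thm:KS1}), and the Krull--Schmidt property for all the listed bounded block categories then follows either from Propositions \ref{thm:KS1} and \ref{prop:KS2} and Theorem \ref{thm:KSbd} restricted to the block, or equivalently from the observation that hom-finite plus idempotent complete implies Krull--Schmidt since the endomorphism ring of each indecomposable is a finite-dimensional $k$-algebra with no proper idempotents, hence local. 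The main obstacle is verifying that the $V$-relative notions are well-behaved under block restriction, but this reduces to checking the block-invariance of the cover complex $P(\kk)^*$ and of its tensor products, which is immediate once noticed.
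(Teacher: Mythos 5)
Your proposal is correct and takes essentially the same (here, implicit) approach as the paper: the paper states Theorem~\ref{thm:blockcats} ``for the record'' with no proof, relying entirely on the block-decomposition discussion in the preceding paragraph. You have supplied the routine blockwise verification that the paper leaves to the reader, including the one delicate point that $V \otimes {-}$ does not respect blocks but applying $e_B$ still yields the relative projective and injective covers needed for the Frobenius structure on $\bC(B)$, after which the triangulated, idempotent-complete, and Krull--Schmidt conclusions transfer from the ambient results (Propositions~\ref{prop:idemcomp}, \ref{prop:KS2}, Theorems~\ref{thm:idemfinleng}, \ref{thm:derivedic}, \ref{thm:KSbd}) exactly as you describe.
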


As a cautionary note, it should be added that seldom do any of the
above categories, that are associated to blocks, have a tensor structure.


\section{A functorial version of the Green correspondence} \label{sec:functor}
The purpose of this section is to lay a functorial framework for the
Green correspondence. The aim is to isolate, in an abstract way, the
condition necessary to define the correspondence. By this process, we
see that the correspondence can be defined in many contexts. Our
specific applications are to categories of complexes and their homotopy
categories and derived categories. The reader might notice that, even
though the setting is far more general, the
development here follows closely the same steps as in
the paper of Benson and Wheeler \cite{benson-wheeler}. Indeed that paper
was a big inspiration.

We wish to consider the following diagram of categories and functors.
In the diagram, all vertical arrows are inclusions of full subcategories.
For a category  $\CD$ the notation $\Ad(\CD)$ means the closure
of $\CD$ under taking direct summands. If $\CC$ and $\CD$ are subcategories
of $\CG$, then $\CC + \CD$ means the full subcategory of all objects
that can be written as the direct sum of an object in $\CC$ and an
object in $\CD$. 

\[
\xymatrix@C+2pc{
\quad \CH \quad \ar@<.5ex>[rr]^{I} \ar@(u,ul)[]_{F}
&& \quad \CG \quad \ar@<.5ex>[ll]^{R} \\
\CL^\prime = \CL + \Ad(F(\CL)) \ar@{^{(}->}[u] \\
&& \CM = \Ad(I(\CL)) \ar@{-->}@<-.5ex>[ull]_{(\text{mod} \CY)}
\ar@{^{(}->}[uu]\\
\CL \ar@{^{(}->}[uu]  \ar[urr] \\
\CX \ar[rr] \ar@{^{(}->}[u] &&
   I(\CX) \ar@{^{(}->}[uu]
}
\]

Here $\CY = \CX + \Ad(F(\CL))$. The arrow from 
$\CM$ to $\CL^\prime$ is dashed because
it is not a functor, though there is a 
functor to $\CL^\prime/\CY$, as is explained below. 

Our main theorem is the following.

\begin{theorem}  \label{thm:functorial}
Suppose we have categories given as in the above diagram.
We assume the following.
\begin{enumerate}
\item All of the categories are additive categories.
\item The subcategories $\CL$ and $\CX$ are closed under direct sums and
summands in $\CH$.
\item The quotient categories $\CH/\CX$ and $\CG/I(\CX)$ are defined.
\item $I$ and $R$ are an adjoint pair of functors with natural transformations
$\varepsilon: 1_{\CH} \to RI$ and $\eta: IR \to 1_{\CG}$.
\item There is a functor $F: \CH \to \CH$ and a natural transformation
$\varphi: F \to RI$ such that the sum of the transformations
$(\varepsilon, \varphi): 1_{\CH} \oplus F  \to RI$ is an isomorphism
of functors. In particular, for every object $M$ in $\CH$
we have that $RI(M) \cong M \oplus F(M).$
\item For objects $L$ in $\CL$ and $M$ in $\CM$, every map $\gamma:
L \to F(M)$ and every map $\delta: F(M) \to L$
that factors through an object in
$\CY$, factors also through an object in $\CX$.
\item The quotient category $\CH/\CX$ has idempotent completions.
\end{enumerate}
Then we have an adjoint pair of $\bI$ and $\bR$ functors
\[
\xymatrix{
\CL/\CX \ar@<.5ex>[rr]^{\mathbb I} && \CM/I(\CX)
\ar@<.5ex>[ll]^{\mathbb R}
}
\]
that give categorical equivalences.
\end{theorem}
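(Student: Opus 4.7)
The plan is to establish the equivalence by a three-step process: show that $\bI$ and $\bR$ are well-defined on the quotients and mediate an auxiliary equivalence $\CL/\CX\simeq \CL'/\CY$; use the splitting $RI\cong 1\oplus F$ to push the unit of the ambient adjunction down to an isomorphism on the quotient level; and finally leverage idempotent completeness to obtain essential surjectivity. The argument parallels that of Benson--Wheeler, with the subcategory $\CY$ and the endofunctor $F$ playing the role of the controlled error.

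First I would verify that the functors descend. For $\bI$, a morphism $L_1\to L_2$ factoring through $X\in\CX$ is sent by $I$ to one factoring through $I(X)\in I(\CX)$. For $\bR$, an object $M\in\CM=\Ad(I(\CL))$ is a summand of some $I(L)$, whence $R(M)$ is a summand of $RI(L)\cong L\oplus F(L)\in\CL+\Ad(F(\CL))=\CL'$; likewise a morphism factoring through $I(X)$ is mapped under $R$ to one factoring through $RI(X)\cong X\oplus F(X)\in \CX+\Ad(F(\CL))=\CY$, so $\bR:\CM/I(\CX)\to\CL'/\CY$ makes sense. Then I would show that the inclusion $\CL\hookrightarrow\CL'$ induces an equivalence $\CL/\CX\simeq\CL'/\CY$: every $L\oplus Y'$ with $Y'\in\Ad(F(\CL))\subseteq\CY$ is isomorphic to $L$ in $\CL'/\CY$ because $Y'$ becomes a zero object there; and a morphism $f:L_1\to L_2$ between objects of $\CL$ which factors through some $X\oplus Y'\in\CY$ splits as a sum of two components, one factoring through $\CX$ tautologically and the other factoring through $\CX$ by hypothesis~(6). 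Fullness is automatic since representatives of morphisms are the same in both quotients.

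Next, I would establish that the unit of the adjunction becomes an isomorphism modulo $\CY$. By hypothesis~(5), $\varepsilon_L:L\to RI(L)$ corresponds under $(\varepsilon,\varphi)$ to the canonical inclusion $L\hookrightarrow L\oplus F(L)$, and since $F(L)\in\CY$, this inclusion is an isomorphism in $\CL'/\CY$. To deduce that $\bI$ is fully faithful, I need the adjunction $I\dashv R$ to descend: a morphism $\alpha:I(L)\to M$ factors through $I(\CX)$ exactly when its adjoint $\tilde\alpha=R(\alpha)\circ\varepsilon_L:L\to R(M)$ factors through $\CY$. The forward direction follows from naturality of $\varepsilon$ together with $RI(X)\cong X\oplus F(X)\in\CY$; the converse uses hypothesis~(6) to absorb the $\Ad(F(\CL))$-part of the factorization. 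Combined with unit invertibility, the standard adjunction calculation $\Hom_{\CM/I(\CX)}(\bI L_1,\bI L_2)\cong\Hom_{\CL/\CX}(L_1,\bR\bI L_2)\cong\Hom_{\CL/\CX}(L_1,L_2)$ yields full faithfulness of $\bI$.

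For essential surjectivity and the main obstacle: given $M\in\CM$ as a summand of some $I(L_0)$, the corresponding idempotent $e\in\End_\CG(I(L_0))$ lifts, by full faithfulness, to a unique idempotent $\bar e\in\End_{\CL/\CX}(L_0)$. By hypothesis~(7), $\bar e$ splits in $\CH/\CX$; combined with closure of $\CL$ under direct summands (hypothesis~(2)), one extracts an object $L\in\CL$ representing the image, giving $\bI(L)\cong M$ in $\CM/I(\CX)$. I expect the main technical obstacle to be this last step: an idempotent in $\CH/\CX$ need not be representable by an honest idempotent in $\CH$, so the splitting produced by the Karoubian envelope of $\CH/\CX$ is a priori only an object of $\CH$, and arguing that a representative may be chosen inside $\CL$ requires a careful combined use of conditions~(2) and~(7). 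The converse direction of the adjunction descent is the other delicate point, since it is precisely the content of hypothesis~(6) that is required to identify when an $R$-adjoint factorization through $\CY$ comes from an $I(\CX)$-factorization upstairs.
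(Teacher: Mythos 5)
Your strategy mirrors the paper's (itself modeled on Benson--Wheeler): both hinge on the auxiliary equivalence $\CL/\CX \simeq \CL'/\CY$ induced by the inclusion $\CL\hookrightarrow\CL'$, both use hypothesis~(6) to absorb the $\Ad(F(\CL))$-part of factorizations, and both invoke hypothesis~(7) to compensate for the lack of a Krull--Schmidt property. There is, however, a genuine gap at the point where you declare $\bR$ well-defined. You observe that $R(M)$ is a direct summand of $RI(L)\cong L\oplus F(L)\in\CL'$ and conclude that $\bR:\CM/I(\CX)\to\CL'/\CY$ ``makes sense.'' This does not follow: $\CL'=\CL+\Ad(F(\CL))$ is a \emph{sum} of two subcategories of $\CH$, not a summand-closed one, so a direct summand of an object of $\CL'$ need not itself lie in $\CL'$. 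Placing $R(M)$ (up to an error in $\CY$) inside $\CL'$ is precisely where idempotent completeness must already be invoked; the paper isolates this as Lemma~\ref{lem:gr3} --- if $M$ is a summand of an object of $\CL'$, then $M\oplus Y\in\CL'$ for some $Y\in\CY$ --- and then \emph{defines} $\hatR(M)=R(M)\oplus Y$ (Corollary~\ref{cor:functor}), with Lemma~\ref{lem:gr2} (that $\CL'/\CY\simeq\CL/\CX\subseteq\CH/\CX$ inherits idempotent completeness) supplying the input.

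The same lemma also discharges the difficulty you flag at the end about essential surjectivity: once $R(M)\oplus Y\in\CL'=\CL+\CY$, one reads off a summand lying in $\CL$ directly, so there is no need to chase whether an abstract splitting object of an idempotent in $\CH/\CX$ can be re-represented inside $\CL$. Your instinct that this step is delicate is correct, but a direct appeal to hypothesis~(2) cannot work as you sketch it: condition~(2) controls summands taken in $\CH$, whereas the splitting produced by~(7) lives only in $\CH/\CX$. Lemma~\ref{lem:gr3} is the form in which~(2) and~(7) are actually combined, and it is needed both to define $\bR$ and to get essential surjectivity. A final stylistic rather than substantive difference: having constructed $\hatR$, the paper concludes not via ``fully faithful plus essentially surjective'' but by writing explicit mutually inverse bijections $\gamma(f)=\alpha_M R(f)\varepsilon_L$ and $\beta(g)=\eta_M I(\alpha_M^{-1}g)$ between $\Hom(\bI(L),M)$ and $\Hom(L,\bR(M))$, where $\alpha_M$ is the relevant component of the isomorphism $R(M)\oplus Y\cong L'\oplus Y'$; either route should succeed once $\bR$ and Lemma~\ref{lem:gr3} are in place.
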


To define the functors, we require some preliminary information.
Throughout, we use the notation of the theorem.

\begin{lemma} \label{lem:gr1}
There exist functors $U$ and $V$,
\[
\xymatrix{
\CL/ \CX \ar@<.5ex>[rr]^{U} && \CL^\prime/\CY \ar@<.5ex>[ll]^{V}
}
\]
giving equivalences of categories.
\end{lemma}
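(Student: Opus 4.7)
The plan is to define $U\colon \CL/\CX \to \CL^\prime/\CY$ as the functor induced by the inclusion $\CL \hookrightarrow \CL^\prime$ on the quotients, which is well defined because $\CX \subseteq \CY$. In the opposite direction, I will exploit the decomposition $\CL^\prime = \CL + \Ad(F(\CL))$: every object $L^\prime \in \CL^\prime$ admits a splitting $L^\prime \cong L \oplus N$ with $L \in \CL$ and $N \in \Ad(F(\CL))$, and I set $V(L^\prime) = L$. On a morphism $f\colon L_1^\prime \to L_2^\prime$ with chosen splittings $L_i^\prime = L_i \oplus N_i$, I let $V(f)$ be the $(L_1, L_2)$-matrix entry $f_{11}$, viewed in $\CL/\CX$.

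The first verification will be that $V$ descends to a well-defined functor $\CL^\prime/\CY \to \CL/\CX$. This requires two things: (i) the class of $f_{11}$ in $\CL/\CX$ is independent of the splittings used, and (ii) composition is preserved modulo $\CX$. Both reduce to a single observation: any map $L_1 \to L_2$ between objects of $\CL$ that factors as $L_1 \to N \to L_2$ with $N \in \Ad(F(\CL))$ already factors through an object of $\CX$. This is the content of hypothesis (6), applied after extending the factorization along the split inclusion $N \hookrightarrow F(L_0)$ and projection $F(L_0) \twoheadrightarrow N$, where $L_0 \in \CL$ is chosen so that $N$ is a summand of $F(L_0)$; the resulting map through $F(L_0)$ factors through $N \in \CY$ and hence by (6) through $\CX$. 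Hypothesis (7), idempotent completeness of $\CH/\CX$, is used in the background to ensure that the splittings $L^\prime \cong L \oplus N$ actually exist as genuine direct-sum decompositions in the quotient.

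With $V$ in hand, the natural isomorphism $VU \cong \mathrm{id}_{\CL/\CX}$ is immediate by using the trivial splitting $L = L \oplus 0$. For $UV \cong \mathrm{id}_{\CL^\prime/\CY}$, the canonical projection $L \oplus N \to L$ is an isomorphism in $\CL^\prime/\CY$ because $N \in \Ad(F(\CL)) \subseteq \CY$, and naturality is again a block-matrix computation modulo $\CY$. Equivalently, one may check that $U$ is essentially surjective (by the same projection) and fully faithful: a morphism $f\colon L_1 \to L_2$ between objects of $\CL$ that factors through some $Y \in \CY$ can be written, using a decomposition $Y \cong X \oplus N$ with $X \in \CX$ and $N \in \Ad(F(\CL))$, as a sum of one piece through $X$ and one piece through $N$, the latter reduced to $\CX$ by step (i).

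The main obstacle is exactly step (i) in the definition of $V$: turning factorizations through the $\Ad(F(\CL))$-component of $\CY$ into factorizations through $\CX$. This is the sole non-formal ingredient, and its combinatorial content is packaged precisely in hypothesis (6); once it is in hand, everything else is either a direct consequence of the set-up or a routine manipulation of matrix entries of morphisms with respect to chosen splittings.
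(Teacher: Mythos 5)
Your argument is essentially the paper's: $U$ is induced by the inclusion, $V$ is projection onto the $\CL$-component of a decomposition of an object of $\CL^\prime$, and everything comes down to the single observation that a morphism between objects of $\CL$ factoring through an object of $\Ad(F(\CL))$ already factors through $\CX$, which is what hypothesis (6) delivers. You are in fact more explicit than the paper, which merely writes ``by condition (6)'' at the crucial step; spelling out the passage through a split embedding $N \hookrightarrow F(L_0)$ is a genuine clarification.

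One small correction: your appeal to hypothesis (7) is superfluous. By the paper's convention, $\CC + \CD$ is \emph{by definition} the full subcategory of objects that decompose as a genuine direct sum of an object in $\CC$ and an object in $\CD$; thus every object of $\CL^\prime = \CL + \Ad(F(\CL))$ comes already equipped with a splitting $L \oplus N$, and no idempotent completeness is needed to produce it. The paper deliberately proves Lemma~\ref{lem:gr1} \emph{without} invoking (7), precisely so that it can feed into Lemma~\ref{lem:gr2} (idempotent completeness of $\CL^\prime/\CY$), which is where (7) first enters.
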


\begin{proof}
First note that $\CL^\prime = \CL + \Ad F(\CM) = \CL + \CY$.
Define $U$ by $U(L) = L \oplus 0$ for $L \in \CL$, that is, the
functor induced by the inclusion of $\CL$ into $\CL^\prime$. It 
is clear that any map that factors through an object in $\CX$ also
factors through one in $\CY$.

For $V$, suppose that $L \oplus Y$ is
an object in $\CL +\CY$, {\it i. e.} $L$ in $\CL$ and 
$Y$ in $\CY$. Then, let $V(L\oplus Y) = L$. We first
check that this is well defined. For suppose that $L \oplus Y
\cong L^\prime \oplus Y^\prime$. The isomorphism between the two is
given by a matrix
\[
\begin{bmatrix} \alpha & \beta \\ \gamma & \delta \end{bmatrix}
\]
By condition (6) of the theorem, $\beta$ and $\gamma$ factor through
an element of $\CX$. It follows, with some computation,
that $L$ and $L^\prime$ are isomorphic modulo $\CX$. The other details
are likewise staightforward. Thus, $UV$ and
$VU$ are the identity functors.
\end{proof}

\begin{lemma}  \label{lem:gr2}
The category $\CL^\prime/\CY$ has idempotent completions.
\end{lemma}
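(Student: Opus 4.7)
The plan is to use the equivalence from Lemma~\ref{lem:gr1} to transport the problem to $\CL/\CX$, then exploit hypothesis~(7) of Theorem~\ref{thm:functorial} together with the closure properties of $\CL$ inside $\CH$.

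First, I would invoke Lemma~\ref{lem:gr1} to obtain the equivalence $V: \CL^\prime/\CY \to \CL/\CX$ (with inverse $U$). Since idempotent completeness is preserved under equivalences of additive categories, it suffices to prove that $\CL/\CX$ has idempotent completions. Note next that, because $\CX \subseteq \CL$ and $\CL$ is a full subcategory of $\CH$, the induced functor $\CL/\CX \to \CH/\CX$ is fully faithful, so we may regard $\CL/\CX$ as a full subcategory of $\CH/\CX$. By hypothesis~(7) the ambient category $\CH/\CX$ is idempotent complete.

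Thus, given an idempotent $\bar e: L \to L$ in $\CL/\CX$, viewed as an idempotent in $\CH/\CX$, it splits in $\CH/\CX$: there exist $M \in \CH$ and morphisms $p \in \Hom_{\CH/\CX}(L, M)$, $i \in \Hom_{\CH/\CX}(M, L)$ with $ip = \bar e$ and $pi = 1_M$. The conclusion is that $\bar e$ splits in $\CL/\CX$, provided $M$ is isomorphic in $\CH/\CX$ to some object of $\CL$.

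The hard part will be exactly this last verification, i.e.\ showing that $\CL/\CX$ is closed under direct summands in $\CH/\CX$. My plan is to combine hypothesis~(2) (closure of $\CL$ under direct summands in $\CH$) with the adjunction data. Concretely, choose lifts of $p, i$ to $\CH$ so that $pi = 1_M + \beta$ and $ip = \bar e + \alpha$, with $\alpha, \beta$ factoring through objects of $\CX$. Applying $I$ transfers this data to $\CG/I(\CX)$, where the corresponding splitting lives on $I(L) \in I(\CL) \subseteq \CM$ and where $\CM = \Ad(I(\CL))$ is, by definition, closed under direct summands in $\CG$. Pulling back via $R$ and using the isomorphism $RI \cong 1_\CH \oplus F$ from hypothesis~(5), one gets that $RI(L) \cong L \oplus F(L)$ acquires a splitting in $\CH$ that refines the one modulo $\CX$, forcing $M$ (up to isomorphism in $\CH/\CX$) to sit as a direct summand in $\CH$ of an object of $\CL^\prime$. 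The closure of $\CL$ under summands in $\CH$ (hypothesis~(2)), together with the fact that the $\Ad F(\CL)$ part becomes zero in $\CL^\prime/\CY$, then places the relevant summand inside $\CL$. The same argument applied to $1_L - \bar e$ handles the complementary summand $N$, completing the splitting of $\bar e$ in $\CL/\CX$, and hence in $\CL^\prime/\CY$.
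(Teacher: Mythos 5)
Your opening moves match the paper: reduce to $\CL/\CX$ via Lemma~\ref{lem:gr1}, then view $\CL/\CX$ as a full subcategory of the idempotent-complete $\CH/\CX$ (hypothesis~(7)). You also correctly flag the remaining issue: if $\bar e$ on $L\in\CL$ splits in $\CH/\CX$ with splitting object $M$, one must check that $M$ lies in $\CL$ (up to isomorphism in $\CH/\CX$).

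The gap is in how you propose to do that. Passing through $I$, then $R$, then the isomorphism $RI\cong 1_{\CH}\oplus F$ never upgrades a splitting modulo $\CX$ (or modulo $I(\CX)$) to an honest splitting in $\CH$ (or in $\CG$): an additive functor transports the modulo-$\CX$ data but does not repair it, and your sentence asserting that $RI(L)\cong L\oplus F(L)$ ``acquires a splitting in $\CH$ that refines the one modulo $\CX$'' is precisely the step that needs an argument and is not supplied. Also, $\CM=\Ad(I(\CL))$ being closed under summands in $\CG$ does not help, because $I(M)$ is only a summand of $I(L)$ modulo $I(\CX)$.

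None of $I$, $R$, $F$ is needed here. The direct argument, which is what the paper's appeal to ``Condition (2) and (7)'' is standing in for, goes as follows. You already have lifts $p:L\to M$, $i:M\to L$ in $\CH$ with $pi=1_M+\beta$, where $\beta$ factors as $\beta=dc$ through some $X'\in\CX$, $c:M\to X'$, $d:X'\to M$. Form the maps $\hat\imath:M\to L\oplus X'$ with components $i$ and $c$, and $\hat p:L\oplus X'\to M$ with components $p$ and $-d$. Then $\hat p\hat\imath=pi-dc=1_M$ \emph{exactly}, so $M$ is an honest direct summand of $L\oplus X'$ in $\CH$. Since $L\in\CL$, $X'\in\CX\subseteq\CL$, and $\CL$ is closed under finite direct sums and direct summands in $\CH$ by hypothesis~(2), we get $M\in\CL$; likewise for the complementary summand. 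Hence $\bar e$ splits in $\CL/\CX$, and Lemma~\ref{lem:gr1} transports this to $\CL'/\CY$.
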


\begin{proof}
Note that the category $\CL^\prime$ can not be assumed to have
idempotent completions. However we have seen that its quotient by $\CY$
is equivalent to $\CL/\CX$. Hence, it is sufficient to show that
$\CL/\CX$ has idempotent completions and we know this
by Condition (2) and (7) of the theorem. That is, $\CL/\CX \subseteq \CH/\CX$,
and the latter is idempotent complete.
\end{proof}

\begin{lemma} \label{lem:gr3}
Suppose that $L$ is in $\CL^\prime$, and that $M$ is a direct summand
of $L$.  Then there exists $Y$  in $\CY$ such that $M \oplus Y$ is in
$\CL^\prime$.
\end{lemma}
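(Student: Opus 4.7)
The strategy is to exploit the idempotent completion of $\CL^\prime/\CY$ provided by Lemma \ref{lem:gr2}, combined with the standard ``stabilization'' trick for turning an isomorphism-modulo-an-ideal into an honest direct-summand relation.

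First, let $e \in \End_{\CH}(L)$ be the idempotent with $eL \cong M$. Its image $\bar e$ in $\End_{\CL^\prime/\CY}(L)$ is still idempotent, so by Lemma \ref{lem:gr2} it splits: there exist $N \in \CL^\prime$ and morphisms $\iota\colon N \to L$ and $\pi\colon L \to N$ in $\CL^\prime$ for which $\pi\iota - 1_N$ and $\iota\pi - e$ each factor through some object of $\CY$.

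Second, I would compare $M$ with $N$ via the canonical inclusion $i_M\colon M \to L$ and projection $p_M\colon L \to M$. Setting $\alpha := \pi i_M$ and $\beta := p_M\iota$, the relations $i_M p_M = e$ and $p_M i_M = 1_M$ give $\beta\alpha \equiv 1_M$ and $\alpha\beta \equiv 1_N$ modulo maps factoring through $\CY$. Choosing a factorization $1_M - \beta\alpha = s't'$ of the error through an object $Y_L \in \CY$, the standard $2 \times 2$ matrix trick augments $\alpha$ and $\beta$ to maps $\tilde\alpha\colon M \to N \oplus Y_L$ and $\tilde\beta\colon N \oplus Y_L \to M$ with $\tilde\beta\tilde\alpha = 1_M$. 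Because $\CY \subseteq \CL^\prime$, the object $N \oplus Y_L$ lies in $\CL^\prime$, so writing $N \oplus Y_L \cong M \oplus K$ already exhibits $M \oplus K$ as an object of $\CL^\prime$.

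The remaining -- and main -- obstacle is to verify that the complementary summand $K$ is itself in $\CY$, so that we may set $Y := K$. To do this I would examine the complementary idempotent $\mu := 1_{N \oplus Y_L} - \tilde\alpha\tilde\beta$ in matrix form: its $(1,1)$-entry $1_N - \alpha\beta$ factors through $\CY$ by construction, its $(2,1)$-entry ends in $Y_L$, and both entries in the second column begin in $Y_L \in \CY$. Consequently $\mu$ itself factors through an object of $\CY$. Combining this with the idempotent completeness of $\CH/\CX$ from hypothesis (7) and the structure $\CY = \CX + \Ad(F(\CL))$, the image $K$ of the idempotent $\mu$ is then seen to lie in $\CY$, producing the required $Y \in \CY$ with $M \oplus Y \in \CL^\prime$.
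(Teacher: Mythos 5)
Your proof opens exactly as the paper does: you split the idempotent $e$ in $\CL^\prime/\CY$ using Lemma \ref{lem:gr2}, producing $N\in\CL^\prime$ together with $\iota,\pi$ whose composites agree with $1_N$ and $e$ modulo $\CY$, and you then build mutually-inverse-modulo-$\CY$ maps $\alpha,\beta$ between $M$ and $N$. This is the right start and matches the paper's strategy of comparing $M$ with the summand realizing $\bar e$.

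The difficulty is in your second and third paragraphs. After constructing $\tilde\alpha\colon M\to N\oplus Y_L$ with exact retraction $\tilde\beta$, you write ``writing $N\oplus Y_L\cong M\oplus K$'' as though the complement $K$ is available. It is not: that decomposition requires the idempotent $\mu=1-\tilde\alpha\tilde\beta$ on $N\oplus Y_L$ to \emph{split in $\CH$}, and nothing in the hypotheses of Theorem \ref{thm:functorial} asserts that $\CH$ itself is idempotent complete (condition (7) only concerns $\CH/\CX$, and Lemma \ref{lem:gr2} only gives idempotent completeness of $\CL^\prime/\CY$). So $K$ may simply not exist, and the rest of the argument is built on this unwarranted assumption. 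Your final paragraph also does not close the remaining issue: observing, correctly, that every entry of $\mu$ factors through an object of $\CY$ shows that $\mu$ itself factors through $\CY$, but invoking idempotent completeness of $\CH/\CX$ only splits $\mu$ \emph{modulo} $\CX$, not in $\CH$, and passing from ``$\mu$ factors through $\CY$'' to ``its image lies in $\CY$'' would additionally require $\CY$ to be closed under direct summands, which is not among the listed hypotheses. In fairness, the paper's own proof compresses this same stabilization step into the single unproved assertion that $L_e\oplus Y^\prime\cong M\oplus Y$ in $\CH$, so you have run into precisely the point the paper elides; but as written, your argument asserts the existence of the complementary summand $K$ rather than producing it, and that is a genuine gap.
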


\begin{proof}
Suppose that $e: L \to L$ is the idempotent corresponding to $M$, the projection
of $L$ to $M$. By the previous lemma, we know that $e$ splits. So that
$L \cong L_e \oplus L_{1-e}$, where $L_e \cong M$ in $\CL^\prime/\CY$.
So there exist $Y$ and $Y^\prime$ im $\CY$ such that
$L_e \oplus Y^\prime  \cong M \oplus Y$ in $\CH$. Consequently,
$M \oplus Y$ is in $\CL^\prime$.
\end{proof}

\begin{corollary} \label{cor:functor}
The functor $R$ induces a functor $\hatR: \CM \to \CL^\prime/\CY$.
\end{corollary}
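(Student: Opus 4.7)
The plan is to assemble $\hatR$ on objects by composing $R$ with a choice of lift into $\CL'$. For $M\in\CM$, pick $L\in\CL$ such that $M$ is a direct summand of $I(L)$, which is possible since $\CM = \Ad(I(\CL))$. Applying $R$ and using hypothesis (5) of Theorem \ref{thm:functorial}, $R(M)$ is a direct summand of $RI(L)\cong L\oplus F(L)$. The ambient object $L\oplus F(L)$ lies in $\CL' = \CL + \Ad(F(\CL))$, so Lemma \ref{lem:gr3} produces $Y\in\CY$ with $R(M)\oplus Y$ belonging to $\CL'$. Declare $\hatR(M):=R(M)\oplus Y$, regarded as an object of $\CL'/\CY$.

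Next I would verify that $\hatR(M)$ is independent, up to canonical isomorphism in $\CL'/\CY$, of the auxiliary choices of $L$ and $Y$. If $R(M)\oplus Y$ and $R(M)\oplus Y'$ both lie in $\CL'$ with $Y,Y'\in\CY$, then the block matrices $\left(\begin{smallmatrix}\id_{R(M)} & 0\\ 0 & 0\end{smallmatrix}\right)$ define morphisms in both directions whose compositions differ from the respective identities only by maps factoring through $Y$ or $Y'$; these factor through objects of $\CY$ and hence vanish in $\CL'/\CY$. Thus the two lifts are mutually inverse in $\CL'/\CY$. Independence of $L$ is handled analogously, since any two presentations of $M$ as a summand of some $I(L_i)$ give lifts of $R(M)$ differing by $\CY$-summands.

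On morphisms, for $f:M\to N$ in $\CM$ with fixed lifts $\hatR(M)=R(M)\oplus Y_M$ and $\hatR(N)=R(N)\oplus Y_N$, define $\hatR(f)$ to be the block matrix $\left(\begin{smallmatrix} R(f) & 0\\ 0 & 0\end{smallmatrix}\right)$. Any other extension of $R(f)$ to these direct sums differs from this one by maps into or out of the $\CY$-summands, hence is equivalent in $\CL'/\CY$. Functoriality, namely $\hatR(\id_M)=\id_{\hatR(M)}$ and $\hatR(gf)=\hatR(g)\hatR(f)$, is inherited from the corresponding identities for $R$ via block matrix multiplication.

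The main obstacle is the canonicity of the lift: one must carefully use the definition of $\CL'/\CY$ (morphisms in $\CL'$ modulo those factoring through an object of $\CY$) to check that the choice of $Y$ from Lemma \ref{lem:gr3} does not matter at the level of the quotient. Lemma \ref{lem:gr3} is essential because $R(M)$ itself need not lie in $\CL'$; once the lift exists and is canonical, the definition of $\hatR$ on morphisms and the verification of functoriality are immediate.
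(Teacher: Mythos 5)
Your proposal is correct and follows essentially the same route as the paper's proof: express $M$ as a summand of $I(L)$, apply condition~(5) to get $RI(L)\cong L\oplus F(L)\in\CL'$, and invoke Lemma~\ref{lem:gr3} to produce $Y\in\CY$ with $R(M)\oplus Y\in\CL'$. You supply the verification of independence of the auxiliary choices and the definition on morphisms, which the paper dispatches in a single sentence; that level of detail is welcome but represents the same underlying argument rather than a different one.
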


\begin{proof}
Suppose that $M$ is an object in $\CM$. Then $M$ is a direct summand of
$I(L)$ for some $L$ in $\CL$. Now, $RI(L) = L \oplus F(L)$, and
by the previous lemma, there exists $Y$ in $\CY$ such that $R(M) \oplus Y$
is in $\CL^\prime = \CL + \Ad (F(M))$. Thus, we define $\hatR(M)
= R(M) \oplus Y$. Note that this does not depend on the choice of $Y$.
\end{proof}

We are now ready to prove the main theorem of the section.

\begin{proof}[Proof of Theorem \ref{thm:functorial}]
We define  $\bI: \CL/\CX \to \CM/I(\CX)$
to be the functor induced by the restriction of $I$ to $\CL$.
The functor $\bR$ is the composition $\bR = V\hatR$.

Suppose that $M$ is an object in $\CM$. We know that there
exists $Y$ in $\CY$ such that $\hatR(M) = R(M) \oplus Y$ is in
$\CL^\prime$. That is $R(M) \oplus Y \cong L^\prime \oplus Y^\prime$
for $L^\prime$ in $\CL$ and $Y^\prime$ in $\CY$. Then $V\hatR(M) = L^\prime$.
The map $\hatR(M) = R(M) \oplus Y \to L^\prime \oplus Y^\prime$
has the form of a matrix
\[
\begin{bmatrix} \alpha & \beta \\ \gamma & \delta \end{bmatrix}
\]
where $\beta$ and $\gamma$ factor through objects in $\CX$. Consequently,
the relevant portion is the map $\alpha = \alpha_M: R(M) \to L^\prime$.

For $L$ in $\CL$ and $M$ in $\CM$, we define
\[
\xymatrix{
\gamma: \Hom_{\CL/I(\CX)}(\bI(L), M) \ar[r] &
\Hom_{\CM/\CX}(L, \bR(M))
}
\]
by $\gamma(f) = \alpha_M R(f) \varepsilon_L$. That is, this is the
composition modulo $\CX$
\[
\xymatrix@+.5pc{
L \ar[r]^{\varepsilon_L \quad} & RI(L) \ar[r]^{R(f)\quad} & R(M) \oplus Y
\ar[r] & L^\prime \oplus Y^\prime \ar[r] &  L^\prime
}
\]
Note here that if $f$ factors through $I(X)$ for $X$ in $\CX$,
then $\gamma(f)$ factors through an object in $\CX$. This happens
because any map from $L$ to $F(X)$ factors through an object in
$\CX$ by Condition (6) of the Theorem.

Now we define $\beta: \Hom_{\CM/\CX}(L, \bR(M)) \to
\Hom_{\CL/I(\CX)}(\bI(L), M)$ by letting $\beta(g) = \eta_MI(\alpha_M^{-1}g).$
That is, it is the composition
\[
\xymatrix@+2pc{
I(L) \ar[r]^{I(g) \qquad \qquad} & IV\hatR(M) \cong I(L^\prime \oplus Y^\prime)
\ar[r]^{\qquad \quad I(\alpha_M^{-1})} & IR(M) \ar[r]^{\eta_M} & M
}
\]
Of course, the map which we are calling $\alpha^{-1}$ is only an inverse
for $\alpha$ modulo maps that factor through elements of $\CX$. The isomorphism
$R(M) \oplus Y \cong L^\prime \oplus Y^\prime$ guarantees that there
is such a map. Note that if $g$ factors through an element of $\CX$, then
the composition $\eta_MI(\alpha^{-1}g)$ factors through an element
of $I(\CX)$.

Suppose that $g \in \Hom_{\CM/\CX}(L, \bR(M))$. Let $f = \beta(g)$.
Then,

\begin{align*}
\gamma(f) &=  \alpha_M R(f) \varepsilon_L =
\alpha_M R(\eta_MI(\alpha_M^{-1}g)) \varepsilon_L \\
&= \alpha_M R(\eta_M) RI(\alpha_M^{-1}g) \varepsilon_L
= \alpha_M \alpha_M^{-1} g = g
\end{align*}
modulo maps that factor through objects in $\CX$. The next to last step
in the above sequence of equations
is a consequence of the adjunction between $R$ and
$I$ which implies that $R(\eta_M) RI(\mu) \varepsilon_L = \mu$ for any
map $\mu: L \to R(M)$.

On the other hand if $f \in \Hom_{\CL/I(\CX)}(\bI(L), M)$, then
let $g = \gamma(f).$ So

\begin{align*}
\beta(g) & =  \eta_MI(\alpha_M^{-1}g) =
\eta_M I(\alpha_M^{-1}\alpha_M R(f) \varepsilon_L) \\
& = \eta_M IR(f) I(\varepsilon_L) = f
\end{align*}

Thus we have shown that $\gamma\beta$ and $\beta\gamma$ are the identities
and that $\bI$ and $\bR$ are an adjoint pair as asserted.
\end{proof}

\begin{remark}
The primary reason for the assumption of $\CH/\CX$  having idempotent
completions is to make possible the proof of Lemma \ref{lem:gr3}. That is,
we require that $\CL^\prime/\CY$ have idempotent completions as in
Lemma \ref{lem:gr2}. The same thing would be accomplished if we assumed
that $\CG$ and $\CH$ are Krull-Schmidt categories, or that $\CG$ and $\CH$
have countable direct sums.
\end{remark}


\section{Relative projective theory.}\label{sec:relprojtheory}
Let $k$ be a commutative ring, and suppose that $H$ is a subgroup
of a finite group $G$. In this section, we consider the induction and
restriction functors and remind the reader that many of the standard
results associated with these functors, by virtue of
their combinatorial nature,
hold for complexes and homotopy classes of complexes
as well as for modules. In particular, some of the conditions of
Theorem \ref{thm:functorial} are classical results in representation
theory. For notation,
let $\bC_G$ denote one of the categories of complexes such as $\Cpx$,
$\cpx$, $\Cpxp$ $\cpxFL$ of $kG$-modules or its homotopy categories
$\CK(\bC_G)$. There are induction and restriction functors, which we
denote $\Ind_H^G$ and $\Res_H^G$. When $X$ is a complex or module for
$kH$ and when there is little chance of confusion,
we often use the standard notation
\[
X^{\uparrow G} = \Ind_H^G(X) = kG \otimes _{kH} X
\]
to denote its induction to $G$.
We also use $X_{\downarrow H} = \Res_H^G(X)$ to
denote the restriction to $kH$  of a $kG$-complex
or module $X$.
For any object $X$ in $\bC$, there is a
natural decomposition
\[
\Ind_H^G(X) = X^{\uparrow G} = kG \otimes_{kH} X =
\oplus_{g \in G/H}^{} \quad g \otimes X
\]
as complexes of vector spaces, where the sum is over a
set of representatives of the left cosets of $H$ in $G$. Likewise
for a map $f: X \to Y$, $\Ind_H^G(f) = \sum_{g \in G/H} g \otimes f$.

The following results are standard in representation theory.
The statement (a) is usually called the Mackey Decomposition Theorem,
while (b) is Frobenius Reciprocity. The statement (c) which is a form
of Frobenius reciprocity, is often called the Eckmann-Shapiro Lemma. While
the proofs are classical, we give a quick review here in order to make it
clear that the theorems are valid for complexes regardless of the
coefficients. As the constructions are all well known and straightforward,
We leave it to the reader to check a great many details.

In the notation of the theorem below, we note that if $t \in G$, then
$(t \otimes \quad)$ is a functor from $\bC_{K^t \cap H}$ to
$\bC_{K \cap {}^tH}$, taking an object $X$ to $t\otimes X$. For $K$ 
a subgroup of $G$ and $t \in G$, ${}^tK = tKt^{-1}$ and $K^t = t^{-1}Kt$.

\begin{theorem} \label{thm:std-reptheory}
Suppose that $\bC$ is as in \ref{nota:cpx}  or a category of complexes of
finite length modules as in Section \ref{sec:FL}.
Let $H$ and $K$ be subgroups of $G$. Then the following hold in $\bC$.
\begin{enumerate}
\item[(a)]
There is a natural transformation of functors
\[
\alpha: \Res_K^G\Ind_H^G( \ \cdot \ ) \longrightarrow
\sum_{t \in K\backslash G/H}  \Ind_{K \cap {}^tH}^K
(t \otimes  \Res^H_{K^t \cap H}( \ \cdot \ ) )
\]
where the sum is over a set of representatives of the $K$-$H$-double
coset in $G$. The transformation is an isomorphism on objects.
Thus for an object $X$ in $\bC_H$, there is an isomorphism
\[
\alpha_X: (X^{\uparrow G})_{\downarrow K} \cong \oplus_{t\in K\backslash G/H}
(t \otimes X_{\downarrow K^t \cap H})^{\uparrow K}.
\]
\item[(b)]
Assume that the tensor products of objects in $\bC_G$ and $\bC_H$ are defined.
There is a natural transformation of functors
\[
\beta: \quad \cdot_G \ \otimes \Ind_H^G( \ \cdot_H \ ) \longrightarrow
\Ind_H^G(\Res_H^G ( \ \cdot_G \ )  \otimes  \ \cdot_H \ )
\]
from $\bC_G \times \bC_H$ to $\bC_G$, that is an isomorphism on objects.
Thus for objects $X$ in $\bC_G$ and $Y$ in $\bC_H$, $\beta_{X,Y}:
X \otimes Y^{\uparrow G} \cong (X_{\downarrow H} \otimes Y)^{\uparrow G}$.
\item[(c)] The functors $\Res_H^G$ and $\Ind^G_H$ are adjoints of each other.
\item[(d)] There is a functor $F: \bC_H \to \bC_H$ such that
$\Res_H^G\Ind_H^G = 1_{\bC_H} \oplus F$.
\end{enumerate}
\end{theorem}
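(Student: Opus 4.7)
The plan is to reduce each of the four statements to its classical module-theoretic counterpart and then observe that it transports to categories of complexes essentially for free. The key structural point is that both $\Ind_H^G$ and $\Res_H^G$ act termwise on a complex: if $X$ is a $kH$-complex then $\Ind_H^G(X)^n = kG \otimes_{kH} X^n$ with boundary $1 \otimes \partial_X^n$, and restriction simply forgets $G$-structure on each term while leaving the boundary unchanged. Consequently any bimodule isomorphism of $kG$ yields a degree-by-degree natural isomorphism of complexes that automatically commutes with differentials, and any map of modules in the classical setting extends to a chain map. Moreover $[G:H] < \infty$ ensures that boundedness, finite generation and finite composition length are preserved in both directions, so the assertions in $\bC_H$ and $\bC_G$ follow from their counterparts in $\Mod(kH)$ and $\Mod(kG)$.

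For part (a), I would decompose $kG$ as a $(kK,kH)$-bimodule along the double cosets, writing $kG = \bigoplus_{t \in K \backslash G / H} kKtkH$, and identifying each summand via the standard isomorphism $kKtkH \cong kK \otimes_{k(K \cap {}^tH)} (t \otimes kH)$ induced by choosing a transversal. Applying $- \otimes_{kH} X^n$ in each degree yields the desired isomorphism $\alpha_X$, and naturality in $X$ is inherited from the bimodule description. For part (b), I would define $\beta_{X,Y}$ on each degree by $x \otimes (g \otimes y) \mapsto g \otimes (g^{-1}x \otimes y)$, with inverse $g \otimes (x \otimes y) \mapsto gx \otimes (g \otimes y)$; this is clearly $kG$-linear and, since the boundary on a tensor of complexes acts as $\partial \otimes 1 \pm 1 \otimes \partial$ on both sides and $\beta$ preserves degree, it commutes with differentials.

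For part (c), I would invoke the classical tensor-Hom adjunction $\Hom_{kG}(kG \otimes_{kH} X, Y) \cong \Hom_{kH}(X, \Res_H^G Y)$ at the level of graded modules; the unit $\varepsilon_X : X \to \Res_H^G \Ind_H^G(X)$, $x \mapsto 1 \otimes x$, and the counit $\eta_Y : \Ind_H^G \Res_H^G(Y) \to Y$, $g \otimes y \mapsto gy$, are patently chain maps, and the triangle identities hold termwise. Part (d) is then a direct consequence of (a) with $K = H$: the double coset decomposition gives
\[
\Res_H^G \Ind_H^G(X) \ \cong \ \bigoplus_{t \in H \backslash G / H} \Ind_{H \cap {}^tH}^H \bigl(t \otimes \Res_{H^t \cap H}^H X\bigr),
\]
and the trivial double coset $t = 1$ contributes the summand $X$ (this being exactly the image of $\varepsilon_X$), so setting
\[
F(X) \ = \ \bigoplus_{\substack{t \in H \backslash G / H \\ t \notin H}} \Ind_{H \cap {}^tH}^H \bigl(t \otimes \Res_{H^t \cap H}^H X\bigr)
\]
defines the desired complementary functor.

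There is no genuinely hard step, but the main point requiring care is bookkeeping: one must verify that every construction above preserves the particular subcategory $\bC$ under consideration (bounded above, bounded below, bounded, finite composition length, finitely generated), that each natural transformation is natural on morphisms rather than only on objects, and that the splitting in (d) is functorial. The first is automatic from $[G:H] < \infty$; the second follows because every transformation is manufactured from a bimodule decomposition of $kG$ and is therefore natural in the argument; and the third is immediate from the explicit formula for $F$.
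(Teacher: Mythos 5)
Your overall strategy is the same as the paper's: reduce each statement to the classical bimodule computation over group algebras, observe that induction and restriction act degree-by-degree, and check that the classical isomorphisms commute with the differentials and are natural. Parts (a), (b) and (d) match the paper's proof closely, including the explicit formula $x \otimes (g \otimes y) \mapsto g \otimes (g^{-1}x \otimes y)$ in (b) and the expression of $F$ via the nontrivial double cosets in (d).

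The one genuine gap is in (c). The statement asserts that $\Res_H^G$ and $\Ind_H^G$ are adjoints \emph{of each other}, i.e.\ there are natural isomorphisms in both directions, but you only produce the tensor--Hom adjunction $\Hom_{kG}(\Ind_H^G X, Y) \cong \Hom_{kH}(X, \Res_H^G Y)$, which gives $\Ind_H^G \dashv \Res_H^G$. The paper also exhibits the other adjunction $\Res_H^G \dashv \Ind_H^G$ explicitly, via the second pair of natural transformations $\varepsilon'_X(x) = \sum_{g \in G/H} g \otimes g^{-1}x$ and $\eta'_Y(\sum g \otimes y_g) = y_1$, yielding $\Hom_{kH}(\Res_H^G X, Y) \cong \Hom_{kG}(X, \Ind_H^G Y)$. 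This second adjunction does not follow from the first formally; it relies on $[G:H] < \infty$ (so that induction and coinduction coincide, $kG \cong \Hom_{kH}(kG,kH)$ as $(kH,kG)$-bimodules). You should add this, or at least note that coinduction agrees with induction here, so that both adjunctions hold. Everything else in your argument, in particular the bookkeeping that each subcategory in Notation \ref{nota:cpx} and the finite-length categories is preserved by induction and restriction, is correct and lines up with the paper's proof.
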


\begin{proof}
The point of the Mackey Theorem (a) is that for any object $X$ in $\bC$
\[
\Ind_H^G(X) = X^{\uparrow G} = kG \otimes_{kH} X =
\oplus_{g \in G/H}^{} \quad g \otimes X
\]
where the sum is over a complete set of representatives
of the left cosets of $H$ in $G$. If one restricts to $K$, then the
sum over all of the left cosets in a single $K$-$H$-double coset is a
$kK$-subcomplex. It remains to show that
as $kK$-objects $\sum_{g \in KtH/H} g \otimes X
\cong \Ind_{K \cap tHt^{-1}}^K \Res^{tHt^{-1}}_{K \cap tHt^{-1}}(t \otimes X)$
where the sum is over a set of representatives of the left cosets of
$H$ that are contained in $KtH$ for $t \in G$. This proof is 
fairly straightforward.
It should be checked that the decomposition given by the Mackey Theorem
commutes with the differentials of a complex, and that the isomorphism,
which is defined internally on an object $X$, is actually a natural
transformation of the functors.

For the Frobenius reciprocity (b), there is a  map on objects that
sends
\[
X \otimes (kG \otimes_{kH} Y) \to
kG \otimes_{kH} (X_{\downarrow H} \otimes Y)
\]
defined by the formula
\[
x \otimes (g \otimes y) \mapsto g \otimes (g^{-1}x \otimes y),
\]
for $x \in X$, $y \in Y$ and $g \in G$.
The inverse isomorphism send $g \otimes (x \otimes y)$ to
$gx \otimes (g \otimes y)$.  It can be seen that the maps
commute with the differentials on the complexes and with maps
between complexes. Thus they are natural transformations of the
functors.

To prove (c), we define natural transformation $\eta: \Ind_H^G\Res_H^G
\to 1_{\bC_G}$ and $\varepsilon: 1_{\bC_H} \to \Res_H^G\Ind_H^G$, by
$\eta_X(g \otimes x) = gx$ for $g \in G$ and $x \in X$ in $\bC_G$, and
$\varepsilon_Y(y) = 1 \otimes y$ for $y \in Y$ in $\bC_H$.
Then the isomorphism
\begin{equation*}
\Hom_{\bC_H}(Y, \Res_H^G(X)) \to \Hom_{\bC_G}(\Ind_H^G(Y), X) \tag{adj1}
\end{equation*}
sends a map $f$ to $\eta \Ind_H^G(f)$, while the inverse isomorphism sends
$f$ to $\Res_H^G(f)\varepsilon$. Likewise we have natural transformation
$\eta^\prime: \Res_H^G\Ind_H^G \to 1_{\bC_H}$ and
$\varepsilon^\prime: 1_{\bC_G} \to \Ind_H^G\Res_H^G$ by
$\eta^\prime_Y(f)(\sum_{G/H} g \otimes y_g) = y_1$, for $y_g \in Y$
and the sum over a set of representatives of the left cosets of
$H$ in $G$, and $\varepsilon^\prime(x) = \sum_{G/H} g \otimes g^{-1}x$
for $x \in X$ in $\bC_G$. Then, the isomorphism
\begin{equation*}
\Hom_{\bC_H}(\Res_H^G(X), Y) \to \Hom_{\bC_G}(X, \Ind_H^G(Y)) \tag{adj2}
\end{equation*}	
takes $f$ to $\eta^\prime\Ind_H^G(f)$, while its inverse takes
a map $f$ to $R(f)\varepsilon^\prime$.

Statement (d) is a direct consequence of the Mackey Theorem, letting
\[
F( \ \cdot \ ) = \sum_{H\backslash G/H, x \neq 1} \Ind^H_{{}^tH\cap H}
(t \otimes \Res_{H\cap H^t}^{H}( \ \cdot \ ))
\]
where the sum is over a set or representatives of the $H\thyph H$-double
cosets in $G$ that are {\em not} the identity.
Note that $\varepsilon: 1_{\bC_H} \to \Res_H^G\Ind_H^G
= 1_{\bC_H} \oplus F$ is the injection and it is split by
$\eta^\prime: \Res_H^G\Ind_H^G \to 1_{\bC_H}.$
\end{proof}

For the homotopy categories we have the following.

\begin{theorem} \label{thm:def-homotop}
Let $H$ be a subgroup of $G$.
Let $\bC$ be a category of complexes as in Theorem \ref{thm:std-reptheory}.
Suppose that $V$ is a $kG$-module that is free of finite rank as a module
over $k$. Let $(\CK_*(\bC_G), \CK_*(\bC_H))$ be one of the pairs of 
homotopy categories ($\CK(\bC_G)$, $\CK(\bC_H))$,
($\CK_{\Vsplt}(\bC_G)$, $\CK_{V_{\downarrow H}\thyph\splt}(\bC_H)$),
($\CK_{V\thyph\ts}(\bC_G)$, $\CK_{V_{\downarrow H}\thyph\ts}(\bC_H)$), 
or ($\CK_{\ts+\Vsplt}(\bC_G)$, $\CK_{\ts+V_{\downarrow H}\thyph\splt}(\bC_H)$).
Then induction and restriction define
functors $\Ind^G_H: \CK_*(\bC_H) \to \CK_*(\bC_G)$ and
$\Res_H^G: \CK_*(\bC_G) \to \CK_*(\bC_H)$.
Moreover, these functors satisfy the conclusions of Theorem
\ref{thm:std-reptheory}
\end{theorem}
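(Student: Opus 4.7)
The plan is to show that, for each of the four listed pairs, the induction and restriction functors preserve the class of sequences defining the quotient (equivalently, preserve the corresponding class of relative projectives), so they pass to well-defined functors on the homotopy categories. Once this descent is established, the natural transformations $\alpha,\beta,\eta,\varepsilon,\eta',\varepsilon'$ constructed on the chain level in the proof of Theorem~\ref{thm:std-reptheory} are already chain maps between the relevant functors, so they induce natural transformations between the induced functors on the quotients. In particular, the Mackey isomorphism (a), the Frobenius isomorphism (b), the adjunction (c), and the splitting $\Res^G_H\Ind^G_H\cong 1\oplus F$ of (d) are inherited formula by formula.

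First I would handle restriction. Restriction does not change the underlying $k$-structure, so it carries term split sequences of $kG$-complexes to term split sequences of $kH$-complexes. If $V\otimes E$ is split as a sequence of $kG$-complexes, then the same splitting, viewed as a $kH$-linear map, splits $V_{\downarrow H}\otimes \Res^G_H E$; hence restriction carries $\Vsplt$-sequences to $V_{\downarrow H}\thyph\splt$-sequences, and the mixed classes $V\thyph\ts$ and $\ts{+}\Vsplt$ follow by combining the two.

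For induction I would use two ingredients. First, the decomposition
\[
\Ind^G_H X \;=\; \bigoplus_{g\in G/H} g\otimes X
\]
shows that induction is termwise a direct sum of $k$-translates, so it sends term split sequences to term split sequences. Second, for the $V$-split case I would invoke Theorem~\ref{thm:std-reptheory}(b) to obtain the natural isomorphism
\[
V\otimes \Ind^G_H E \;\cong\; \Ind^G_H\!\bigl(V_{\downarrow H}\otimes E\bigr).
\]
If $V_{\downarrow H}\otimes E$ is $kH$-split, apply $\Ind^G_H$ to the $kH$-linear splitting map to obtain a $kG$-linear splitting of the right-hand side, hence of $V\otimes\Ind^G_H E$. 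The mixed classes again follow by combining the two. Consequently, a map that factors through a relative projective in $\bC_H$ (respectively $\bC_G$) is sent by $\Ind^G_H$ (respectively $\Res^G_H$) to a map that factors through a relative projective in $\bC_G$ (respectively $\bC_H$), so both functors descend.

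With descent in hand, naturality and adjointness are automatic: the maps $\eta,\varepsilon,\eta',\varepsilon'$ of Theorem~\ref{thm:std-reptheory}(c) are defined termwise by explicit formulas on complexes, and the adjunction identities on the chain level pass through the quotient; the Mackey decomposition is termwise an isomorphism of complexes, and its restriction to the identity double coset gives the splitting $\Res^G_H\Ind^G_H\cong 1\oplus F$ in each homotopy category. The main obstacle I expect is the clean verification that induction truly carries $V_{\downarrow H}$-split sequences to $V$-split ones, since one must be careful that the splitting produced via Frobenius reciprocity is $kG$-linear and compatible with the boundary maps of the ambient complexes, rather than just with the termwise module structure; once that naturality check is done, every remaining assertion is a direct translation of Theorem~\ref{thm:std-reptheory} to the quotient.
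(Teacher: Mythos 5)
Your argument is correct and follows essentially the same path as the paper: the key step in both is showing that induction and restriction respect the relevant relative structure, with Frobenius reciprocity (Theorem~\ref{thm:std-reptheory}(b)) doing the work for the $V$-split cases, after which the explicit chain-level natural transformations of Theorem~\ref{thm:std-reptheory} descend to the quotient. The only difference is one of phrasing: the paper verifies directly that the relatively projective objects are preserved (two-term contractible complexes for $\ts$, direct summands of $X\otimes V^{\#}\otimes V$ for $\Vsplt$), whereas you verify that the distinguished sequence classes are preserved; these are interchangeable here, since each of $\Ind^G_H$ and $\Res^G_H$ is exact for the relevant structures and the two are adjoint in both directions.
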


\begin{proof}
It suffices to show that the relative projective objects for the
homotopy are preserved by the functors. For the ordinary homotopy, a
relative projective object is a directs sum of two-term complexes of
the form $ \dots \to 0 \to W \to W \to 0 \to \dots$. It is obvious that
the induction or restriction of such a complex has the same form.
Consequently, the induction or restriction of a map that factors through
such a complex also factors through a relative projective.

Notice that the restriction of a $V$-projective complex of $kG$-modules
to $H$ is a $V_{\downarrow H}$-projective complex.
On the other hand, if $X$ is
a $V_{\downarrow H}$-projective module or complex, then $X$ is a direct
summand of $Y \otimes V^\#_{\downarrow H}  \otimes V_{\downarrow H}$ for
some object $Y$. By Frobenius Reciprocity
(Theorem \ref{thm:std-reptheory}(b)), $X^{\uparrow G}$ is a direct summand of
$Y^{\uparrow G} \otimes V^\# \otimes V$. Hence the induction of a
relative $V_{\downarrow H}$-projective object is a relative
$V$-projective object. This proves that the induced functors are defined.
Furthermore, we have the following commutative diagram:
\[
\xymatrix{
\bC_H \ar@<.5ex>[r]^{\Ind_H^G} \ar[d]_{q_H}
& \bC_G \ar[d]^{q_G} \ar@<.5ex>[l]^{\Res^G_H}  \\
   \CK_*(\bC_H)\ar@<.5ex>[r]^{\Ind_H^G}
& \CK_*(\bC_G)\ar@<.5ex>[l]^{\Res^G_H}
.}
\]
Since the functors $q_H$ and $q_G$ preserve direct sums,
the statements (a) follows.

If $X$ and $Y$ are isomorphic in $\bC_G$, then $X$ and $Y$ are
isomorphic in $\CK(\bC_G)$. Then the statement (b) follows from
the definition of the induction functors above.
With the above commutative diagram, the statement (c) and (d) follows from the
above discussion and from the explicit description of the isomorphisms
given in the proof of statement (c) and (d) of Theorem
\ref{thm:std-reptheory}.
\end{proof}

We remind the reader that if $H$ is a subgroup of $G$, then an object
in $\bC_G$ is (relatively) $H$-projective if it is a direct summand of
$Y^{\uparrow G}$ for some object $Y$ in $\bC_H$. By Frobenius reciprocity
(see part (b) above) this is the same as being
$V$-projective for $V = (k_H)^{\uparrow G}$. If $\CD$ is a collection of
subgroups of $G$, we say that an object is $\CD$-projective if it is
a summand of a direct sum of $D$-projective objects
for $D \in \CD$. This is the same as
$V$-projective for $V =\sum_{D \in \CD} k_D^{\uparrow G}$. A map is
$\CD$-projective if it factors through a $\CD$-projective object.
For a subgroup $H$ of $G$ and complexes $X$ and $Y$,
there is a relative
trace map $\Tr_H^G: \Hom_{\bC_H}(X, Y) \to \Hom_{\bC_G}(X, Y)$ given by
$\Tr_H^G(f) = \sum_{G/H} gfg^{-1}.$
Observer that, if $\alpha \in \Hom_{kG}(W, X)$ and
$\beta \in \Hom_{kG}(Y, Z)$ for objects $W$ and $Z$, then
$\beta\Tr_H^G(f)\alpha = \Tr_H^G(\beta f \alpha)$.

It is clear that the map $\Tr_H^G$ is well defined in any of the
categories of complexes, {\it i. e.} whenever a map of objects is a
map or sets. Some more care is needed for the homotopy categories.

\begin{lemma}
Let $\boK_G$ be one of the homotopy categories $\CK_*(\bC_G)$ as in
Theorem \ref{thm:def-homotop}. Then the map $\Tr_H^G$ induces a map
$\Tr_H^G: \Hom_{\boK_H}(X_{\downarrow H}, Y_{\downarrow H})
\to \Hom_{\boK_G}(X,Y)$ for any objects $X$ and $Y$.
\end{lemma}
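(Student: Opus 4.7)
The plan is to verify, for each of the four pairs of relative homotopy categories listed in Theorem \ref{thm:def-homotop}, that if a chain map $f: X_{\downarrow H}\to Y_{\downarrow H}$ represents the zero morphism in $\boK_H$, then $\Tr_H^G(f)$ represents the zero morphism in $\boK_G$. Equivalently, if $f$ factors through a relative projective object of $\bC_H$ in the relevant exact structure, then $\Tr_H^G(f)$ factors through a relative projective object of $\bC_G$. Well-definedness of $\Tr_H^G$ at the chain level is automatic from the fact that the differentials are $kG$-equivariant.

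The key common ingredient is that induction carries the relevant relative projectives on the $H$-side into those on the $G$-side. A complex of the form $0\to W\to W\to 0$ with identity differential induces to a complex of the same shape, so homotopy-trivial $kH$-complexes induce to homotopy-trivial $kG$-complexes. By Frobenius reciprocity (Theorem \ref{thm:std-reptheory}(b)) applied to the $kG$-module $V^\#\otimes V$, one has
\[
(Z\otimes V^\#_{\downarrow H}\otimes V_{\downarrow H})^{\uparrow G}\;\cong\; Z^{\uparrow G}\otimes V^\#\otimes V,
\]
so induction sends $V_{\downarrow H}$-projective $kH$-complexes to $V$-projective $kG$-complexes. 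Combining these two observations, together with the explicit descriptions in Propositions \ref{prop:Vsplitproj}, \ref{prop:VTsplitproj} and \ref{prop:V+Tsplitproj}, covers all four cases uniformly.

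For the factorization itself, suppose $f=\beta\alpha$ with $\alpha: X_{\downarrow H}\to P$, $\beta: P\to Y_{\downarrow H}$, and $P$ a relatively projective $kH$-complex. Using the unit and counit in the proof of Theorem \ref{thm:std-reptheory}(c), let $\tilde\alpha: X\to P^{\uparrow G}$ be the $kG$-chain map corresponding to $\alpha$ under adjunction, namely $\tilde\alpha(x)=\sum_{g\in G/H}g\otimes\alpha(g^{-1}x)$, and let $\tilde\beta: P^{\uparrow G}\to Y$ be given by $g\otimes p\mapsto g\beta(p)$. Then
\[
\tilde\beta\bigl(\tilde\alpha(x)\bigr)=\sum_{g\in G/H}g\,\beta\alpha(g^{-1}x)=\sum_{g\in G/H}g\,f(g^{-1}x)=\Tr_H^G(f)(x),
\]
so $\Tr_H^G(f)$ factors through $P^{\uparrow G}$, which lies in the $kG$-projective subcategory by the previous paragraph. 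This gives the required descent to $\Hom_{\boK_G}(X,Y)$.

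The only real obstacle is bookkeeping: one must check uniformly that $\tilde\alpha$ and $\tilde\beta$ are bona fide chain maps at the complex level, and that $P^{\uparrow G}$ still lies in $\bC_G$ when $\bC_H$ is one of the bounded, finite-length, or bounded-plus-finite-length variants. The first point is built into the naturality of the adjunction recorded in Theorem \ref{thm:std-reptheory}, while the second is immediate since $[G:H]<\infty$ forces induction to preserve boundedness and termwise finite length. In the ordinary homotopy case one can also bypass the adjunction entirely: if $f=\partial d+d\partial$ as $kH$-chain maps, then $\Tr_H^G(f)=\partial\,\Tr_H^G(d)+\Tr_H^G(d)\,\partial$ exhibits $\Tr_H^G(f)$ as $kG$-null-homotopic.
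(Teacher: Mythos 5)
Your proof is correct, and it follows a genuinely different route from the paper's. The paper argues via relative projective covers: it first notes that any map that factors through a relative projective factors through the relative projective cover of the target, then observes (from the explicit descriptions in Propositions~\ref{prop:Vsplitproj}--\ref{prop:V+Tsplitproj}) that the restriction to $H$ of a $kG$-projective cover of $Y$ serves as a projective cover of $Y_{\downarrow H}$, and finally uses the identity $\Tr_H^G(\beta_{\downarrow H}\alpha)=\beta\,\Tr_H^G(\alpha)$ for $\beta$ a $kG$-map to relocate the factorization to $\bC_G$. You instead take an arbitrary factorization $f=\beta\alpha$ through a relative $\CP_H$-projective $P$, pass both maps through the unit and counit of the $(\Ind_H^G,\Res_H^G)$ adjunction (these are precisely the maps $\sigma,\tau$ appearing in the proof of Theorem~\ref{thm:std-maps}(b)), verify the identity $\tilde\beta\tilde\alpha=\Tr_H^G(f)$, and then appeal to the fact --- already verified in the proof of Theorem~\ref{thm:def-homotop} --- that induction sends $\CP_H$ into $\CP_G$. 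What your approach buys is that it sidesteps any appeal to the existence or uniqueness of relative projective covers and to the slightly delicate claim that restriction of a $G$-cover is an $H$-cover; it needs only ``enough projectives'' on the $H$-side and the induction--preserves--projectives fact, both already established. The paper's route is shorter given its earlier setup, but yours is arguably more robust and self-contained.
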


\begin{proof}
Let $\CP_H$ be the collection of projective objects in $\bC_H$ relative to
the homotopy.  It suffices to show is that if $f: X \to Y$ is a
$kH$-map of objects in $\bC_G$
that is zero on $\boK_H$ (that is, factors through an object in
$\CP_H$) then $\Tr_H^G(f)$ factors through an object in $\CP_G$.
If $f$ factors through an object in $\CP_H$, then
it factors through a relative projective cover for $Y_{\downarrow H}$
which is in $\CP_H$. 
It can be seen from the description of the projectives
as in Propositions \ref{prop:Vsplitproj}, \ref{prop:VTsplitproj},
and \ref{prop:V+Tsplitproj}, that the restriction to $H$ of a 
relative projective cover 
for an object $Y$ can serve as a relative projective
cover for $Y_{\downarrow H}$.
Let $\beta: P \to Y$ be such a cover for $Y$, with $P$ in $\CP_G$.
Then we have that $f = \beta\alpha$ for $\alpha: X_{\downarrow H}
\to P_{\downarrow H}$. But then $\Tr_H^G(f) = \Tr_H^G(\alpha)\beta$
factors through an object in $\CP$.
\end{proof}

\begin{theorem}  \label{thm:std-maps}
Suppose that $\bC$ is as in \ref{thm:std-reptheory} or a
homotopy category of a category of such complexes
as in Theorem \ref{thm:def-homotop}. Let $H$ be a subgroup of
$G$ and let $\CD$ be a collection of subgroups of $G$.
Then the following hold in $\bC$.
\begin{enumerate}
\item[(a)] If an object $X$ in $\bC$ is $H$-projective, then
$\Id_X = \Tr_H^G(\mu)$ for some $kH$-map $\mu: X \to X$.
\item[(b)] A map $f: X \to Y$ factors through an $H$-projective
object if and only $f = \Tr^G_H(\mu)$ for some $\mu: X_{\downarrow H}
\to Y_{\downarrow H}$ in $\bC_H$.
\item[(c)] Assuming the $\bC_G$ has idempotent completions,
an object $X$ is $H$-projective if and only if $X$ is a
direct summand of $\Ind_H^G\Res_H^G(X)$.
\item[(d)] A map $f: X \to Y$ is $\CD$-projective if and only if
$f = \sum_{D \in \CD} \Tr_D^G(\gamma_D)$ where for $D \in \CD$, 
$\gamma_D: X_{\downarrow D} \to Y_{\downarrow D}$ is a $kD$-map.
\item[(e)] If $f:X \to Y$ is $\CD$-projective and $g:Y \to Z$ is
$\CE$-projective, for $\CE$  another collection of subgroups of $G$,
then $gf$ is $\CF$-projective where $\CF = \{D \cap E \vert D \in \CD,
E \in \CE\}$.
\item[(f)] Suppose that  $k$ is a field and $B$ is a block of $kG$
having defect group $Q$ and if $X$ is an object in $\bC$ and in the
block $B$, then $X$ is $Q$-projective.
\end{enumerate}
\end{theorem}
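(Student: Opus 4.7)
The plan is to establish the Higman-type criterion (a) first and then derive the remaining items in sequence, since each builds on the preceding.

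For (a), write $X$ as a direct summand of $Y^{\uparrow G}$ for some $Y\in \bC_H$, with $kG$-maps $\iota\colon X\to Y^{\uparrow G}$ and $\pi\colon Y^{\uparrow G}\to X$ satisfying $\pi\iota=\Id_X$. The Mackey decomposition $Y^{\uparrow G}=\bigoplus_{g\in G/H}g\otimes Y$ makes the projection $\mu_0$ onto the $g=1$ summand a $kH$-endomorphism of $Y^{\uparrow G}$ whose relative trace satisfies $\Tr_H^G(\mu_0)=\Id_{Y^{\uparrow G}}$, as the translates $g\mu_0 g^{-1}$ pick out precisely the remaining direct summands. Setting $\mu:=\pi\mu_0\iota$ and using naturality of the relative trace under composition with $kG$-maps gives $\Tr_H^G(\mu)=\pi\Tr_H^G(\mu_0)\iota=\pi\iota=\Id_X$.

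Parts (b) and (c) follow quickly. For the forward direction of (b), if $f=\beta\alpha$ factors through an $H$-projective $Z$, insert $\Id_Z=\Tr_H^G(\mu_Z)$ from (a) and pull the trace outside by naturality to get $f=\Tr_H^G(\beta\mu_Z\alpha)$. Conversely, given $\mu$, factor $\Tr_H^G(\mu)$ through the $H$-projective object $Y_{\downarrow H}^{\uparrow G}$ using $\iota(x)=\sum_{g\in G/H}g\otimes\mu(g^{-1}x)$ and the counit $\pi(g\otimes y)=gy$. For (c), applying this construction to $\Id_X=\Tr_H^G(\mu)$ from (a) produces an idempotent $\iota\pi$ on $X_{\downarrow H}^{\uparrow G}$ whose image is $X$; idempotent completeness splits it and exhibits $X$ as a summand of $\Ind_H^G\Res_H^G(X)$. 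The reverse implication is immediate.

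For (d), the forward direction decomposes a $\CD$-projective object as a summand of $\bigoplus_i Z_i^{\uparrow G}$ with $Z_i\in\bC_{D_i}$, $D_i\in\CD$, splits $f$ along the resulting matrix of components, and applies (b) to each; the converse is immediate since each $\Tr_D^G(\gamma_D)$ factors through the $\CD$-projective $Y_{\downarrow D}^{\uparrow G}$. For (e), expand $f$ and $g$ via (d) and apply the classical Mackey product formula
\[
\Tr_E^G(\delta)\,\Tr_D^G(\gamma)=\sum_{t\in E\backslash G/D}\Tr_{E\cap{}^tD}^G\bigl(\delta\cdot{}^t\gamma\bigr)
\]
to rewrite $gf$ as a sum of relative traces from intersections $E\cap {}^tD$; these lie in $\CF$ once $\CD$ (or $\CE$) is enlarged to be stable under $G$-conjugation. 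Finally for (f), the defining property of the defect group $Q$ provides $e_B=\Tr_Q^G(z)$ for some $z\in(kG)^Q$; for $X$ in the block $B$, $\Id_X=e_B\cdot\Id_X$ is thereby a relative trace of a $kQ$-endomorphism, so (a) read in reverse (via (b)) forces $X$ to be $Q$-projective.

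The chief obstacle is (e): the Mackey product formula must hold in each of the homotopy categories under consideration, not merely in $\Cpx$. Since the preceding lemma already shows that $\Tr_H^G$ descends to each of these categories, the task reduces to checking that the termwise module-level identity passes to the quotient, which it does because the double-coset reindexing treats both sides symmetrically modulo the relevant class of projective homotopies. A secondary issue is to transfer the algebra-level identity $e_B\in(kG)_Q^G$ in (f) to the complex setting; this is automatic since the block decomposition of $kG$ is respected by every category introduced in the previous sections.
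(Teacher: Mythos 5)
Your proposal is correct and follows essentially the same route as the paper: (a) via the $g=1$ projection on a Mackey decomposition of $Y^{\uparrow G}$ and naturality of $\Tr_H^G$, (b) by factoring through $(Y_{\downarrow H})^{\uparrow G}$ with the explicit $\sigma,\tau$, (c) by splitting the idempotent $\sigma\tau$, (d) as a corollary of (b), (e) via the Mackey product formula, and (f) from $e_B=\Tr_Q^G(z)$. The one place the paper is cleaner is the descent to homotopy categories: rather than your closing remark about the double-coset reindexing passing to the quotient, the paper invokes the commutative square relating $\Tr_H^G$ in $\bC$ and in $\boK$ to reduce all six parts at once to the module-level complex category, which sidesteps any case-by-case verification; your flag that $\CD,\CE$ should be taken closed under $G$-conjugation (so that $E\cap{}^tD$ lands in $\CF$) is a reasonable reading of the intended convention.
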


\begin{proof}
Because we have the commutative diagram:
\[
\xymatrix{
  \Hom_{\bC_H}(X_{\downarrow H}, Y_{\downarrow H})\ar[r]^{\Tr_H^G} \ar[d]  & \Hom_{\bC_G}(X,Y) \ar[d]   \\
   \Hom_{\boK_H}(X_{\downarrow H}, Y_{\downarrow H})\ar[r]^{\Tr_H^G}  & \Hom_{\boK_G}(X,Y)
,}
\]
it suffices to prove the Theorem in category of complexes.

If $X = \Ind_H^G(Y)$ for $Y \in \bC_H$, then, writing
$X = \sum_{G/H}(g \otimes Y)$, define $f: X \to X$ by $f(1 \otimes y) =
1\otimes y$ and $f(g\otimes y) = 0$ if $g \not\in H$. Then
$\Id_X = \Tr_H^G(f)$. If $X$ is only a direct summand of $Y^{\uparrow G}$,
then $\Id_X$ factors through $\Id_{Y^{\uparrow G}}$. Hence it is a
relative trace in this case also.  This proves statement (a).

To prove (b) we first note that if $f:X \to Y$ factors through an
$H$-projective object, then it factors through $Z^{\uparrow G}$ for
some object $Z$ in $\bC_H$, and it is a composition with the identity
of $Z^{\uparrow G}$ which is a relative trace. For the converse, suppose
that $f = \Tr_H^G(\mu)$ for some $\mu: X_{\downarrow H} \to Y_{\downarrow H}$.
then define $\sigma: X \to (Y_{\downarrow H})^{\uparrow G}$, by
$\sigma(x) = \sum_{g \in G/H} g \otimes \mu(g^{-1}x)$ for $x \in X$, and
$\tau: (Y_{\downarrow H})^{\uparrow G} \to Y$ by $\tau(g \otimes y) = gy$
for $y \in Y$. Then note that $\tau\sigma = \Tr_H^G(\mu)$ in $\bC_G$.

For (c) we notice in the above proof that the condition that $\Id_X =
\Tr_H^G(\mu)$ for some $\mu$ implies the existence of
$\sigma: X \to (X_{\downarrow H})^{\uparrow G}$ and
$\tau: (X_{\downarrow H})^{\uparrow G} \to X$ with $\tau\sigma = \Id_X$.
Thus $\sigma\tau$ is an idempotent in the endomorphism ring of
$(X_{\downarrow H})^{\uparrow G}$. Assuming that it splits, $X$ is a
direct summand of $(X_{\downarrow H})^{\uparrow G}$.

Statement (d) follows from (b). For (e), notice that if $H$ and $J$ are
subgroups then a composition $\Tr_H^G(\alpha)\Tr_J^G(\beta) =
\Tr_H^G(\alpha \Res_H^G(\Tr_J^G(\beta)))$. The remainder of the proof is
is a consequence of the Mackey Theorem and the transitivity of induction.

Similarly, the last statement is a consequence of the fact that the
central idempotent that is the identity for the block $B$ is a relative
trace from the defect group $D$. See a standard text such as \cite{feit}
\end{proof}


\section{The Green correspondence} \label{sec:applications}
The purpose of this section is present a version of the Green correspondence
for derived categories and categories of complexes associated to group
algebras. The classical Green correspondence assumes that
$k$ is a field of characteristic $p$ and that $H$ is a subgroup
containing the normalizer of a $p$-subgroup and the correspondences is
between relatively $\FX$-projective $kG$-modules and relatively
$\FY$-projective $kH$-modules for certain collections of subgoups
$\FX$ and $\FY$. The approach here uses Theorem \ref{thm:functorial}
and is somewhat more general as far as the choices of the collections
of subgroups.

Suppose that $H$ is a subgroup of the finite group $G$. Let $\FP$ be
a nonempty collection of subgroups of $H$. We define two collections
of subgroups of $H$:
\[
\FX = \{ gP_1g^{-1} \cap P_2 \ \vert \ P_1, P_2 \in \FP \ \text{and} \
g \in G \setminus H\}
\]
and
\[
\FY = \{ gPg^{-1} \cap H \ \vert \ P \in \FP \ \text{and} \
g \in G \setminus H\}.
\]
Let
\[
V_{\FX} \ = \ \sum_{P \in \FX} k_P^{\uparrow G}
\quad \text{and} \quad
V_{\FY} \ = \ \sum_{P \in \FY} k_P^{\uparrow G}
\]
Note here that $V_{\FX}$ and $V_{\FY}$ are both free and finitely
generated as modules over the coefficient ring $k$.

Let $\bC_G$ denote any one of the categories
$\Cpx(kG)$, $\cpxFL(kG)$, $\Cpx^*(kG)$, or $\cpxFL^*(kG)$
for $* = +, -$ or $b$. Likewise, we let $\bC_H$ be the
same with $G$ replaced by $H$.
Let $\boK_G = \CK_*(\bC_G)$ be one of the homotopy categories $\CK(\bC_G)$,
$\CK_{\Vsplt}(\bC_G)$,
$\CK_{V\thyph\ts}(\bC_G)$ or $\CK_{\ts+\splt}(\bC_G)$
(where $V$ is a finitely generated $kG$-module that is free
as a $k$-module). Then let
$\boK_H$ be $\CK(\bC_H)$,
$\CK_{V_{\downarrow H}\thyph\splt}(\bC_H)$,
$\CK_{V_{\downarrow H}\thyph\ts}(\bC_G)$ or
$\CK_{\ts+V_{\downarrow H}\splt}(\bC_G)$, to correspond to $\boK_G$.
Let $\CC_G$ be $\bC_G$ or $\boK_G$ for some choice.

In $\CC_G$, let  $\FX$-$\Proj(\CC_G)$ be the collection of
$V_{\FX}$-projective objects. Such an object
is a direct summand of a direct sum
of objects induced from objects in $\CC_P$ for $P \in \FX$.
Similarly, an exact sequence of objects in $\CC_G$ is
$\FX$-split, if it is $V_{\FX}$-split, thus implying that
the sequence splits on restriction to {\em every} subgroup
$P \in \FX$.

The idea expressed in the following lemma is used to establish
idempotent completions in the proofs of some theorems.

\begin{lemma} \label{lem:simplequocats}
Let $\bC$ be a category of complexes as above. Let V be a finitely
generated $kG$-module that is free as a module over $k$. For a
collection of subgroups $\FU$, let
$V_{\FU} = \sum_{Q \in \FU} (k_Q)^{\uparrow G}$. Let
$\CK = \CK_{\ts+\Vsplt}(\bC).$
Then
\[
\frac{\CK}{\FU\thyph\Proj(\CK)} \quad = \quad
\CK_{\ts+(V\oplus V_{\FU})\thyph\splt}(\bC).
\]
\end{lemma}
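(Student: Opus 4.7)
The plan is to verify the equality by matching, on each side, which morphisms in $\bC$ become zero. Both quotient categories share the objects of $\bC$, so the comparison reduces to showing that the two ideals of null morphisms coincide.

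On the left side, Proposition~\ref{prop:V+Tsplitproj} identifies $(\ts+\Vsplt)\thyph\Proj$ with the direct sums of $\ts$- and $\Vsplt$-projective objects in $\bC$, so a map in $\CK$ is zero iff it factors in $\bC$ through such a sum. Passing to $\CK/\FU\thyph\Proj(\CK)$ additionally annihilates maps that factor through an $\FU$-projective in $\CK$. Tracing back through the definition of morphisms in $\CK$, this shows that $f\colon X\to Y$ in $\bC$ becomes zero in $\CK/\FU\thyph\Proj(\CK)$ if and only if it factors in $\bC$ through a direct sum of a $\ts$-projective, a $\Vsplt$-projective, and an $\FU$-projective complex.

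For the right side, set $W = V \oplus V_\FU$ and $\SE = (\ts+W\thyph\splt)\thyph\seq$. Since $(V \oplus V_\FU) \otimes E \cong (V \otimes E) \oplus (V_\FU \otimes E)$, a sequence $E$ is $W$-split iff both $V \otimes E$ and $V_\FU \otimes E$ split, so $\SE$ consists of the sequences that are simultaneously term-split, $V$-split, and $V_\FU$-split. I would then apply Theorem~\ref{thm:tensor} with the $\SE$-projective cover of $\kk^*$ given by $P(\kk)^* = \hat{\kk}^* \oplus \CV^* \oplus \CV_\FU^*$, where $\hat{\kk}^*$ and $\CV^*$ are the covers appearing in the proofs of Propositions~\ref{prop:VTsplitproj} and~\ref{prop:Vsplitproj}, and $\CV_\FU^*$ is the one-term complex with $V_\FU^\# \otimes V_\FU$ in degree zero. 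Checking that tensoring with $P(\kk)^*$ preserves each of the three splitting conditions and that the dual sequence provides a relative injective hull then identifies the $\SE$-projectives as the direct summands of sums of $\ts$-, $\Vsplt$-, and $V_\FU\thyph\splt$-projective complexes.

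Finally, Frobenius reciprocity (Theorem~\ref{thm:std-reptheory}(b)) gives $X \otimes V_\FU \cong \bigoplus_{Q\in\FU}(X_{\downarrow Q})^{\uparrow G}$, so the $V_\FU\thyph\splt$-projectives coincide with the $\FU$-projectives. Substituting, the null morphisms on the right are precisely the maps in $\bC$ factoring through direct sums of $\ts$-, $\Vsplt$-, and $\FU$-projectives, matching the left, and the claimed equality of categories follows. The main obstacle I expect is verifying the full list of hypotheses of Theorem~\ref{thm:tensor} for the composite cover $P(\kk)^*$; once this projective characterisation is in hand, the remaining bookkeeping is a routine comparison of factorisations.
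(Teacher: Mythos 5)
Your proof is correct and follows the same strategy as the paper's (terser) argument: the objects agree in both quotient categories, so the content is to match up the ideals of null morphisms, and your characterisations of both ideals are accurate. One shortcut you could take is to apply Proposition~\ref{prop:V+Tsplitproj} directly to $W = V \oplus V_{\FU}$ (which is again free of finite $k$-rank) rather than rebuilding the projective cover of $\kk^*$ by hand through Theorem~\ref{thm:tensor}; this immediately gives the $\SE$-projectives on the right as direct sums of $\ts$-projectives and $W$-projectives, and since a $W$-projective is by definition a summand of $(V\otimes Y)\oplus(V_{\FU}\otimes Y)$, the comparison of the two null ideals then reduces to the Frobenius-reciprocity observation you already make.
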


\begin{proof}
Suppose that $X$ and $Y$ are objects in $\bC$ and $\theta: X \to Y$ is a
morphism in $\CK$. Then $\theta$ if $\FU$-projective if and only if
$\theta = \beta\alpha$, $\alpha: X \to Z$, $\beta: Z \to Y$, where
$Z$ is $\FU$-projective and $\alpha$, $\beta$ are morphisms in $\CK$.
But then $\theta - \beta\alpha$ is zero in $\CK$ and hence factors
through an object that is  projective relative 
to $\ts+\Vsplt$ sequences. This means
that $\theta$ is a projective relative to $\ts+(V\oplus V_{\FU})\thyph\splt$
sequences.
\end{proof}

The main theorem is the following.

\begin{theorem}  \label{thm:greencomplex}
Let $\CC_G$, $\CC_H$, $\FP$, $\FX$, $\FY$ be as above. Then there are
equivalences of categories
\[
\begin{Large}
\xymatrix{
\frac{\FP\thyph\Proj(\CC_H)}{\FX\thyph\Proj(\CC_H)} \ar@<.5ex>[rr]^{\CI} &&
\frac{\FP\thyph\Proj(\CC_G)}{\FX\thyph\Proj(\CC_G)}  \ar@<.5ex>[ll]^{\CR}
}
\end{Large}
\]
that are induced by the induction and restriction operations.
\end{theorem}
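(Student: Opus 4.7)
The plan is to apply the abstract Green correspondence theorem (Theorem~\ref{thm:functorial}) with $\CH=\CC_H$, $\CG=\CC_G$, $\CL=\FP\thyph\Proj(\CC_H)$, $\CX=\FX\thyph\Proj(\CC_H)$, $\CM=\Ad(\Ind_H^G(\CL))=\FP\thyph\Proj(\CC_G)$, $I=\Ind_H^G$, $R=\Res_H^G$, and $F$ the Mackey complement on $\CC_H$ supplied by Theorem~\ref{thm:std-reptheory}(d) (lifted to the homotopy categories by Theorem~\ref{thm:def-homotop}), so that $RI\cong 1_{\CH}\oplus F$. Most of the hypotheses are immediate from the material of Sections~\ref{sec:relproj} and~\ref{sec:relprojtheory}: additivity is clear; $\CL$ and $\CX$ are closed under direct sums and summands by the definition of relative projectivity; the quotient categories $\CH/\CX$ and $\CG/I(\CX)$ are defined because the morphisms that factor through the relevant relatively projective objects form a two-sided ideal; the adjunction and the $RI\cong 1\oplus F$ decomposition are exactly Theorem~\ref{thm:std-reptheory}(c),(d).

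\textbf{Key step: hypothesis (6).} This is where the actual combinatorics of the Green correspondence enters. First I would show that $F(L')$ is $\FY$-projective for every $L'\in\CL$. If $L'=\Ind_P^H(U)$ with $P\in\FP$, then the Mackey description of $F$ (as in the proof of Theorem~\ref{thm:std-reptheory}(d)) presents $F(L')$ as a direct sum indexed by non-identity $(H,H)$-double cosets whose summands, after a second Mackey step, are induced in $\CC_H$ from subgroups of the shape $H\cap{}^tP$ with $t\notin H$; these lie in $\FY$ by definition. Hence $\Ad(F(\CL))\subseteq\FY\thyph\Proj(\CC_H)$, and since $\FX\subseteq\FY$ we get $\CY=\CX+\Ad(F(\CL))\subseteq\FY\thyph\Proj(\CC_H)$. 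Now take a morphism $\gamma:L\to F(L')$ that factors through some $Z\in\CY$. Then $\gamma$ is $\FY$-projective as a morphism, while $\Id_L$ is $\FP$-projective by Theorem~\ref{thm:std-maps}(a) (because $L\in\CL$). Writing $\gamma=\gamma\circ\Id_L$ and applying Theorem~\ref{thm:std-maps}(e), we deduce that $\gamma$ is $\CF$-projective where
\[
\CF=\{\,P\cap Y : P\in\FP,\ Y\in\FY\,\}.
\]
For any $P\in\FP$ and any $Y=gP_1g^{-1}\cap H\in\FY$ (with $P_1\in\FP$, $g\notin H$), the inclusion $P\subseteq H$ gives $P\cap Y=P\cap gP_1g^{-1}$, which lies in $\FX$ since $g\notin H$. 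Therefore $\CF\subseteq\FX$, and $\gamma$ factors through an $\FX$-projective object. The dual argument handles $\delta:F(L')\to L$.

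\textbf{Hypothesis (7): idempotent completion of $\CC_H/\CX$.} When $\CC_H$ is one of the homotopy categories $\CK_*(\bC_H)$, Lemma~\ref{lem:simplequocats} identifies the quotient $\CC_H/\FX\thyph\Proj(\CC_H)$ with a homotopy category of the same shape in which the module $V$ has been enlarged by $V_{\FX}$. Idempotent completion then follows from Proposition~\ref{prop:idemcomp} (when countable direct sums are available) or from Theorem~\ref{thm:idemfinleng} (in the finite-length cases), according to which flavor of $\bC_H$ is at hand. For the plain (non-homotopy) choices $\CC_H=\bC_H$, the same circle of ideas applies, using the abelianness of $\bC_H$ together with the results of Section~\ref{sec:FL}. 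Once (1)--(7) are verified, Theorem~\ref{thm:functorial} yields the required adjoint equivalence $\CI\dashv\CR$, and the functors are induced by $\Ind_H^G$ and $\Res_H^G$ by construction.

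\textbf{Expected obstacle.} The genuine Green-correspondence content sits in (6), and the computation above reduces it to Theorem~\ref{thm:std-maps}(e) together with the elementary observation that $\FP\cap\FY\subseteq\FX$. I therefore expect the real technical work to be on the idempotent-completion side: hypothesis~(7) must be verified uniformly across the many possible choices of $\CC_G$ listed in the statement, and the translation furnished by Lemma~\ref{lem:simplequocats} is what makes this uniform — but its use must be checked carefully for each of the homotopy categories $\CK_{\Vsplt}$, $\CK_{V\thyph\ts}$, and $\CK_{\ts+\Vsplt}$ and for each flavor of underlying complex category (bounded, unbounded, finite-length).
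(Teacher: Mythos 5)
Your proof is correct and follows essentially the same route as the paper's own argument: both set up the data for Theorem~\ref{thm:functorial} in the same way, verify hypotheses (1)--(5) from Theorems~\ref{thm:std-reptheory} and~\ref{thm:def-homotop}, obtain hypothesis (6) from the subgroup-intersection computation $\FP\cap\FY\subseteq\FX$ together with parts (d) and (e) of Theorem~\ref{thm:std-maps} applied to $\gamma=\gamma\circ\Id_L$, and obtain hypothesis (7) from Lemma~\ref{lem:simplequocats} together with Proposition~\ref{prop:idemcomp} and Theorem~\ref{thm:idemfinleng}. You are slightly more explicit than the paper in spelling out why objects of $\CY$ are $\FY$-projective and in flagging that Lemma~\ref{lem:simplequocats} as stated only treats $\CK_{\ts+\Vsplt}$; that caution is well placed, but the overall strategy matches the paper's.
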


\begin{proof}
The proof is by application of Theorem \ref{thm:functorial}. The problem
is to show that the hypothese hold in every case.
The setup is that $\CH = \CC_H$, $\CG = \CC_G$, $R = \Res_H^G$,
$I = \Ind_H^G$, $\CL = \FP\thyph\Proj(\CC_H)$, By transitivity of induction,
$\CM = \FP\thyph\Proj(\CC_G).$ Notice that $\FP\thyph\Proj(\CC_H)$ is
closed under taking direct summands by definition. Finally,
$\FX = \FX\thyph\Proj(\CC_H)$, while $\Ad(I(\FX)) = \FX\thyph\Proj(\CC_G)$

Conditions (1), (2) and (3) of Theorem \ref{thm:functorial} are clearly
satisfied.  Conditions (4) and (5), follow from Theorem
\ref{thm:std-reptheory} parts (c) and (d), respectively, and Theorem
\ref{thm:def-homotop}.

For condition (6), we need a lemma, which says
that a subgroup of some element of $\FP$ that is also a subgroup of some
element in $\FY$ is contained in a subgroup in $\FX$.
This is a standard result. That is, if $Q \subseteq P_1$ for $P_1 \in \FP$
and $Q \subseteq H \cap gP_2g^{-1}$ for $P_2 \in \FP$ and $g \not\in H$,
then $Q \subseteq P_1 \cap gP_2g^{-1} \in \FX$. If $L \in \CL$,
$M \in \CM$ and $\gamma: L \to F(M)$ factors through an $\FY$-projective
object, then $\gamma = \gamma\Id_L$ factors through an $\FX$-projective
object by statements (d) and (e) of Theorem \ref{thm:std-maps}.

To prove (7), it is only necessary to show that
any of the quotient categories
$\CU = \CC_H/\FX\thyph\Proj(\CC_H)$ has idempotent completions.
Note that $\CU$ is a triangulated category. In every case that
we consider, by Lemma \ref{lem:simplequocats}, $\CU$ is a
category that has been discussed in Section \ref{sec:FL} with regard
to the question of idempotent completions. Thus,
$\CU$ has idempotent completions by Proposition \ref{prop:idemcomp} 
and Theorem \ref{thm:idemfinleng}.
\end{proof}

As is pointed out in \cite{benson-wheeler}, the functors are not
precisely the restriction and induction functors. The problem is that
the restriction of a $\FP$-projective $kG$-module is not $\FP$-projective
as a $kH$-module. So the inverse of the induction functor is actually the
composition of the restriction with another categorical equivalence
(called ``V'' in Lemma \ref{lem:gr1}) 
as in the proof of Theorem \ref{thm:functorial}.

\begin{theorem}\label{thm:greenacyclic}
Let $\CC_G$, $\CC_H$, $\FP$, $\FX$, $\FY$ be as above.
Let either $\bA_G$ be $\CA(\CC_G)$ and $\bA_H$ be $\CA(\CC_H)$  
or $\bA_G = \CA_{\Vsplt}(\CC_G)$ and $\bA_H = \CA_{V_H\thyph\splt}(\CC_H)$.
Assume that $\bA_G$ is a thick subcategory of
$\CC_G$ and $\bA_H$ is a thick subcategory of
$\CC_H$ (see the remark following Proposition \ref{prop:acyc1}).
Then there are equivalences of categories
\[
\begin{Large}
\xymatrix{
\frac{\FP\thyph\Proj(\bA_H)}{\FX\thyph\Proj(\bA_H)} \ar@<.5ex>[rr]^{\CI} &&
\frac{\FP\thyph\Proj(\bA_G)}{\FX\thyph\Proj(\bA_G)}  \ar@<.5ex>[ll]^{\CR}
}
\end{Large}
\]
that are induced by the induction and restriction operations.
\end{theorem}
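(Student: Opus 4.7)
The plan is to apply Theorem \ref{thm:functorial} along the same lines as the proof of Theorem \ref{thm:greencomplex}, replacing the ambient categories $\CC_H$, $\CC_G$ by their thick subcategories $\bA_H$, $\bA_G$. Explicitly I would set $\CH = \bA_H$, $\CG = \bA_G$, $I = \Ind_H^G$, $R = \Res_H^G$, $\CL = \FP\thyph\Proj(\bA_H)$, and $\CX = \FX\thyph\Proj(\bA_H)$. By transitivity of induction together with the fact that induction preserves acyclicity, $\CM = \Ad(I(\CL))$ coincides with $\FP\thyph\Proj(\bA_G)$, matching the target of the asserted equivalence.

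First I would verify that $\Ind_H^G$ and $\Res_H^G$ restrict to functors between $\bA_H$ and $\bA_G$. For plain acyclicity this is automatic, since $\Res_H^G$ is exact and $\Ind_H^G = kG \otimes_{kH} (-)$ is exact ($kG$ being $kH$-free). For the $V$-split case, restriction of a $V$-split sequence is manifestly $V_{\downarrow H}$-split, while Frobenius reciprocity (Theorem \ref{thm:std-reptheory}(b)) gives $\Ind_H^G(V_{\downarrow H} \otimes X) \cong V \otimes \Ind_H^G(X)$, so induction converts $V_{\downarrow H}$-splitness into $V$-splitness and applying $\Ind_H^G$ to a splitting map produces a splitting of the induced sequence. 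Granted this, hypotheses (1)--(5) of Theorem \ref{thm:functorial} follow from Theorems \ref{thm:std-reptheory} and \ref{thm:def-homotop} exactly as in the proof of Theorem \ref{thm:greencomplex}. Hypothesis (6) reduces to the same subgroup combinatorial fact used there, combined with Theorem \ref{thm:std-maps}(d),(e): a subgroup subconjugate both to a member of $\FP$ and to a member of $\FY$ lies in some member of $\FX$, so any $\FY$-projective factorization of a map $L \to F(M)$ with $L \in \CL$, $M \in \CM$ refines to an $\FX$-projective factorization.

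The main obstacle is hypothesis (7), namely that the quotient $\CU = \bA_H / \FX\thyph\Proj(\bA_H)$ has idempotent completions. My plan is to recycle the constructions of Section \ref{sec:FL}, now carried out inside $\bA_H$. When $\bC_H$ admits countable direct sums of any prescribed object, these direct sums stay inside $\bA_H$, since a direct sum of acyclic (respectively $V_{\downarrow H}$-split acyclic) complexes is again of the same form; consequently the B\"okstedt--Neeman homotopy colimit that splits a given idempotent produces a summand which again lies in $\bA_H$, and via Lemma \ref{lem:simplequocats} one identifies $\CU$ with one of the homotopy categories to which Proposition \ref{prop:idemcomp} applies. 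When $\bC_H$ is a finite-length category I would repeat the chain-stabilization argument of Theorem \ref{thm:idemfinleng} verbatim: the decomposition $X = Y \oplus Z$ produced there is a decomposition in the ambient abelian category, and since homology commutes with direct sums both $Y$ and $Z$ inherit acyclicity of $X$; the analogous remark using that a direct summand of a split exact complex is split exact handles the $V_{\downarrow H}$-split case.

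Once (7) is in hand, Theorem \ref{thm:functorial} yields the required adjoint equivalence, with $\CI$ induced by $\Ind_H^G$ and $\CR$ the composition of $\Res_H^G$ with the equivalence $V$ of Lemma \ref{lem:gr1}, as explained in the remark following Theorem \ref{thm:greencomplex}.
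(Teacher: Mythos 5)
The overall strategy you follow — apply Theorem \ref{thm:functorial} with $\CH = \bA_H$, $\CG = \bA_G$, $\CL = \FP\thyph\Proj(\bA_H)$, $\CX = \FX\thyph\Proj(\bA_H)$ — coincides with the paper's, and your verification that induction and restriction preserve acyclicity (including the $V$-split variant via Frobenius reciprocity) is sound. The main divergence is your handling of condition (7): instead of invoking the thickness of $\bA_H$ in $\CC_H$ to inherit idempotent completeness from $\CC_H/\FX\thyph\Proj(\CC_H)$ (the paper's route), you rerun the B\"okstedt--Neeman homotopy colimit construction and the finite-length chain-stabilization argument inside $\bA_H$, checking at each stage that the output stays acyclic. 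That is a legitimate alternative and is arguably more self-contained, since it does not rely on identifying $\bA_H/\FX\thyph\Proj(\bA_H)$ as a full subcategory of $\CC_H/\FX\thyph\Proj(\CC_H)$. However, your appeal to Lemma \ref{lem:simplequocats} and Proposition \ref{prop:idemcomp} in the countable-direct-sum case is slightly loose, since those statements are phrased for the homotopy categories $\CK_*(\bC)$, not for subcategories of acyclic complexes, so some rewording is needed to extract the intended conclusion.

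There is one genuine gap, which the paper explicitly addresses and you do not. In your treatment of condition (6) you assert that an $\FY$-projective factorization of a map $L \to F(M)$ refines to an $\FX$-projective factorization, but for this to verify condition (6) of Theorem \ref{thm:functorial} the $\FX$-projective object through which the map factors must lie in $\bA_H$, i.e.\ must itself be an acyclic (or $V$-split acyclic) complex, since $\CX = \FX\thyph\Proj(\bA_H)$ consists only of such objects. The subgroup combinatorics alone only give a factorization through some $\FX$-projective complex of $kH$-modules, with no control on its acyclicity. The paper closes this gap by observing that such a map factors through the relative $\FX$-projective cover of the target, and that cover is acyclic (indeed a relatively projective resolution of an acyclic object is acyclic). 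The same observation also underlies well-definedness of the quotient $\bA_H/\FX\thyph\Proj(\bA_H)$ as a full subcategory of $\CC_H/\FX\thyph\Proj(\CC_H)$ — the issue the paper flags under its condition (3) discussion — so you should add this projective-cover refinement to make conditions (3) and (6) go through.
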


\begin{proof}
The categories involved are additive, thick subcategories of $\CC_G$ or
$\CC_H$. The restriction and induction of an acyclic complex is again an
acyclic complex. Regarding the hypothesis of Theorem \ref{thm:functorial},
conditions (1), (2) and (4) are automatic. For condition (3), it should
be noted that if $f: X \to Y$ is a map of objects in $\bA_H$ that factors
through an $\FX$-projective object, then it factors through
the relative $\FX$-projective cover
of the object $Y$, which is an acyclic object. Hence, there are no
$\FX$-projective maps in $\CC_H$ of objects in $\bA_H$, that are not also
$\FX$-projective in $\bA_H$. The same hold of $I(\FX)$-projective maps
between objects in $\bA_G$. These facts require some checking on a case
by case basis depending on the category $\CC$. We leave the check to the
reader.

Condition (5) of Theorem \ref{thm:functorial} is essentially the Mackey
theorem which holds for acyclic complexes. It should be noted that the
functor $F$ takes acyclic objects to acyclic objects.
For Condition (6), we observe
that if $F: L \to F(M)$ factors through an
$\FY$-projective object where $L$ is an acyclic complex in
$\FP\thyph\Proj(\bA_H)$ and $M$ is in
$\FP\thyph\Proj(\bA_G)$, then it factors through the $\FY$-projective
cover of $F(M)$ which is acyclic. Because $X$ is $\FP$-projective,
the map $f$ factors through an $\FX$-projective complex, which we can
take to be acyclic as before.

Finally, there is the question of Condition (7). However, as the subcategory
$\bA_H$ is thick in $\CC_H$, the property of the quotient category of
$\CC_H$ by the $\FX$-projective objects having idempotent completions,
extends to the category of acyclic objects. That is, the splitting of an
idempotent on an acyclic object in the quotient of $\CC_H$ gives the direct
sum of two objects that must be acyclic.
\end{proof}

Another approach to a proof for the above theorem is that category $\bA_G$ 
and $\bA_H$ are subcategories of $\bC_G$ and $\bC_H$, and the induction 
and restriction functors for $\bA$ are the restrictions of those for 
$\bC$. Moreover, an object in $\bA_H$ is $\FX$-projective in $\bA_H$ 
if and only if it is $\FX$-projective in $\bC_H$. So the question might be
whether the induced equivalences in Theorem \ref{thm:greencomplex} 
extends to those of Theorem \ref{thm:greenacyclic}. The latter theorem 
asserts an affirmative answer and the real reason is embedded in the 
proof. Essentially, it is that a map between acyclic objects in 
$\bA_H$, that factors through an $\FX$-projective objective in 
$\bC_H$, factors through an $\FX$-projective object in $\bA_H$. It is a 
consequence of the fact that it factors through a projective cover. 

For derived categories we come down to the following.

\begin{theorem}  \label{thm:greenderived}
Let $\CD_G = \CD(\CC_G)$ with $\CD_H = \CD(\CC_H)$ or
$\CD_G = \CD_{\Vsplt}(\CC_G)$ with $\CD_H =
\CD_{V_{\downarrow H}\thyph\splt}(\CC_H)$ for $\CC_G$, $\CC_H$ as in the
previous theorem. Assume, as in that theorem, that the subcategories of
acyclic objects are thick. 
Then there are equivalences of categories
\[
\begin{Large}
\xymatrix{
\frac{\FD\thyph\Proj(\CD_H)}{\FX\thyph\Proj(\CD_H)} \ar@<.5ex>[rr]^{\CI}  &&
\frac{\FD\thyph\Proj(\CD_G)}{\FX\thyph\Proj(\CD_G)} \ar@<.5ex>[ll]^{\CR}
}
\end{Large}
\]
that are induced by the induction and restriction operations.
\end{theorem}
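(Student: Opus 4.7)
My plan is to apply Theorem \ref{thm:functorial} with the same dictionary used in the proofs of Theorems \ref{thm:greencomplex} and \ref{thm:greenacyclic}: set $\CH = \CD_H$, $\CG = \CD_G$, $I = \Ind_H^G$, $R = \Res_H^G$, $\CL = \FP\thyph\Proj(\CD_H)$, $\CX = \FX\thyph\Proj(\CD_H)$, so that $\CM = \Ad(I(\CL)) = \FP\thyph\Proj(\CD_G)$ by transitivity of induction, and $\Ad(I(\CX)) = \FX\thyph\Proj(\CD_G)$. The first step is to check that induction and restriction descend to the derived categories at all. This is where the thickness assumption on the acyclic subcategory is used: since $\Ind$ and $\Res$ preserve acyclic complexes (exactness is preserved on either side by the Mackey formula of Theorem \ref{thm:std-reptheory}(a)), they send quasi-isomorphisms to quasi-isomorphisms and hence descend through the Verdier localization from $\boK_G$, $\boK_H$ to $\CD_G$, $\CD_H$. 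The same remark applies to the functor $F$ and to the unit and counit of the adjunction.

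Conditions (1)--(3) of Theorem \ref{thm:functorial} are immediate: all categories in sight are additive (in fact triangulated), and $\FP\thyph\Proj$ and $\FX\thyph\Proj$ are clearly closed under sums and summands. Conditions (4) and (5) are obtained by passing the adjunction and the Mackey decomposition $\Res_H^G \Ind_H^G \cong 1 \oplus F$ from $\boK_H$ to $\CD_H$ via the localization functor, using Theorems \ref{thm:std-reptheory} and \ref{thm:def-homotop}. For condition (6) I would repeat the combinatorial observation used before: a subgroup of a member of $\FP$ that also lies in a conjugate of $\FY$ already lies in a member of $\FX$, so by Theorem \ref{thm:std-maps}(d),(e) any morphism between an $L \in \CL$ and $F(M)$ that factors through a $\FY$-projective in $\CD_H$ is a relative trace from an $\FX$-subgroup and hence factors through an $\FX$-projective. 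A minor point to verify here is that the relative trace formalism of Section \ref{sec:relprojtheory} descends to the derived category; but conjugation by group elements is a functor that preserves quasi-isomorphisms, so the trace map is well defined on morphism sets of $\CD$, and the factorization argument of the proof of Theorem \ref{thm:greenacyclic} carries over verbatim.

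The main obstacle, and the reason the introduction warns that an additional assumption is needed for derived categories, is condition (7): the quotient $\CD_H/\FX\thyph\Proj(\CD_H)$ must have idempotent completions. In the homotopy setting this was handled by recognizing the quotient as another category of the form $\CK_{\ts + W\thyph\splt}$ via Lemma \ref{lem:simplequocats}, and then invoking Proposition \ref{prop:idemcomp} or Theorem \ref{thm:idemfinleng}. I would follow the same script: first prove a derived analogue of Lemma \ref{lem:simplequocats}, identifying the quotient of $\CD_H$ by $\FX\thyph\Proj$ with a derived category whose acyclic class is enlarged so that all $\FX$-projectives become zero; and then invoke Theorem \ref{thm:derivedic}. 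Since that theorem requires either countable direct sums or a finite length hypothesis, this is precisely where the list of admissible categories $\CC_H$ must be restricted (either to the unbounded/semi-bounded $\Cpx$-versions which have countable sums, or to the $\cpxFL$-versions). Once condition (7) is established in the appropriate cases, Theorem \ref{thm:functorial} produces the adjoint equivalence $\CI, \CR$, and one then checks as in \cite{benson-wheeler} that $\CI$ is induced by $\Ind_H^G$ and $\CR$ is induced by $\Res_H^G$ composed with the auxiliary equivalence $V$ of Lemma \ref{lem:gr1}.
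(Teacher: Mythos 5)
Your approach is genuinely different from the paper's and contains a real gap at the crucial step, condition (7). The paper does not reapply Theorem \ref{thm:functorial} to the derived categories at all. Instead, it observes that $\CD_G$ and $\CD_H$ have the same objects as $\CC_G$ and $\CC_H$ (Verdier localization keeps the objects), so the equivalence on objects is already supplied by Theorem \ref{thm:greencomplex}. For morphisms it uses the roof description of arrows in a Verdier quotient: a morphism $X \to Y$ is a class of diagrams $X \leftarrow Z \to Y$ where the left arrow is a quasi-isomorphism. Since $\Ind$, $\Res$, and the resulting equivalences of Theorem \ref{thm:greencomplex} preserve triangles and acyclic complexes (hence quasi-isomorphisms), the bijections on $\Hom$-sets at the homotopy level descend to bijections on $\Hom$-sets in the derived categories. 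This sidesteps idempotent completeness entirely, which is precisely the point of the remark following the theorem: the localization of an idempotent-complete triangulated category by an idempotent-complete thick subcategory need not be idempotent complete, so condition (7) may simply fail for $\CD_H/\FX\thyph\Proj(\CD_H)$.

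Your proposal hinges on a ``derived analogue of Lemma \ref{lem:simplequocats},'' i.e., an identification of $\CD_H/\FX\thyph\Proj(\CD_H)$ with another derived category having an enlarged acyclic class, followed by Theorem \ref{thm:derivedic}. This is the gap: there is no such lemma in the paper and it is not clear that one holds. The two quotient operations involved (Verdier localization at acyclics versus killing $\FX$-projectives) do not obviously commute, and without this identification Theorem \ref{thm:derivedic} gives no traction. Moreover, even if that identification could be established, your route would only give the result after restricting $\CC_H$ to the unbounded or $\cpxFL$ variants, whereas the paper's argument works uniformly for every category covered by Theorem \ref{thm:greencomplex}. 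Your verification of conditions (1)--(6) is fine and parallels the homotopy case, but since the whole application of Theorem \ref{thm:functorial} depends on (7), the argument as proposed does not go through.
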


\begin{proof}
The derived categories $\CD_G$ and $\CD_H$ have the same objects as 
$\CC_G$ and $\CC_H$, respectively, and we know that we have 
well defined equivalences on objects. 
It is easy to see that the induction functor takes exact sequences of
complexes to exact sequences of complexes and in the homotopy categories,
and takes triangles to triangles. This from $\bC_H$ to $\bC_G$. By the 
equivalences, the same happens for the inverse.  
In the derived category $\CD_G$ or $\CD_H$, 
a morphism between objects $X$ and $Y$ 
is an equivalence class of diagrams
\[
\xymatrix{
X & Z \ar[l]_\phi \ar[r]^\theta &Y
}
\]
where the third object in the triangle (in $\CC_G$ or $\CC_H$ as 
appropriate) of $\phi$ is acyclic.
Because, the functors take triangles to triangles and acyclic objects
to acyclic objects, they are equivalences also on morphisms. 
So we have equivalences of the derived categories as additive 
categories. 
\end{proof}

\begin{remark}
The proof of the above theorem avoids the question of idempotent 
completeness of any of the derived categories. We know that idempotent 
completions do exist in several cases. Balmer and Schlichting
\cite{balmer-schlichting} verify  idempotent completions in the bounded
derived category of an exact category that has idempotent completions, and 
also for the category of bounded below complexes. However, in general,
the localization of an idempotent complete triangulated category, by an 
idempotent complete thick subcategory, may not be idempotent complete. 
\end{remark}


\section{Blocks and triangulations} \label{sec:tri-equiv}
Suppose that $k$ is an algebraically closed
field of characteristic $p >0$, or a complete discrete valuation 
ring whose residue field is an algebraically closed field of 
characteristic $p>0$. 
Let $\bC$ be one of the categories of
complexes as before.
We remind the reader that all of the categories that we have discussed
respect the block structure. That is, if $B$ and $B^\prime$ are two
different blocks of $kG$ then there are 
no nonzero morphism from any complex of
modules in $B$ to any complex of modules in $B^\prime.$  
This is simply because the idempotents for the blocks, which act as 
identity on modules in the block, annihilate each other.  A block $B$
has a defect group $Q$ with the property that every module or complex in
$B$ is $Q$-projective, {\it i. e.} is a direct summand of an object
induced from $Q$. The same holds for complexes of $B$-modules, and
in fact, every morphism between two modules or complexes in $B$ is in the 
image of the relative trace map $\Tr_Q^G$.

The Brauer correspondent of $B$ is a block of $kN_G(Q)$,
with the property that the product of the central idempotent of $B$ and $b$
is not zero. 
But more importantly we have the following. We set the notation, as
this will be used again.

\begin{notation} \label{nota:blocks}
For a block $B$ of $kG$, let $Q$ be its defect group and let $b$ be the
Brauer correspondent of $B$. Let $\FP = \{ Q \}$, $H = N_G(Q)$,
$\FX = \{Q \cap Q^\sigma \vert \sigma \in G\setminus H\}$,
and $\FY = \{H \cap Q^\sigma \vert \sigma \in G\setminus H\}$. For $\bC$ a 
category of complexes such as $\Cpx$ or $\Cpxp$, let $\bC(B)$ be the 
full subcategory of $\bC_G$ consisting of those complexes that lie in $B$.
\end{notation}

Note that in the above notation, $\FD\thyph\Proj(B) = \bC(B)$.

\begin{prop} \label{prop:blockimages}
Use Notation \ref{nota:blocks}.
Suppose that $B$ is a block of $kG$.
Let $\bC$ be a category of complexes as in Theorem \ref{thm:greencomplex}. 
For the functors
\[
\begin{large}
\xymatrix@-.3pc{
f: \frac{\FD\thyph\Proj(\bC_G)}{\FX\thyph\Proj(\bC_G)} \ar[r] &
\frac{\FD\thyph\Proj(\bC_H)}{\FY\thyph\Proj(\bC_H)}
\quad \text{and} \quad 
g: \frac{\FD\thyph\Proj(\bC_H)}{\FY\thyph\Proj(\bC_H)} \ar[r] &
\frac{\FD\thyph\Proj(\bC_G)}{\FX\thyph\Proj(\bC_G)}
}
\end{large}
\]
we have that
\[
\begin{large}
\xymatrix@-.3pc{
f:\frac{\bC(B)}{\FX\thyph\Proj(\bC(b))} \ar@{^{(}->}[r] &
\frac{\bC(b)}{\FY\thyph\Proj(\bC(b))}  \quad \text{and} \quad
g: \frac{\bC(b)}{\FY\thyph\Proj(\bC(b))} \ar@{^{(}->}[r] &
\frac{\bC(B)}{\FX\thyph\Proj(\bC(B))}
}
\end{large}
\]
\end{prop}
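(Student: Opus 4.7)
The plan is to deduce this block-refined statement from the classical fact that the Green correspondence for modules respects the Brauer correspondence of blocks, combined with the observation that block idempotents commute with every $kG$- and $kH$-chain map and hence with all boundary operators. The two assertions (for $f$ and $g$) are symmetric, so I focus on the induction direction: given $Y \in \bC(b)$, I must show that $g(Y) = \Ind_H^G(Y)$ lies in $\bC(B)$ modulo $\FX$-projective complexes, i.e.\ that $(1 - e_B)\Ind_H^G(Y)$ is $\FX$-projective as a complex of $kG$-modules.

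The key input is the Brauer--Green correspondence of blocks applied term by term. For an indecomposable $b$-module $M'$ of vertex $Q$, the classical Green correspondence refined to blocks via Brauer's First Main Theorem gives $\Ind_H^G(M') = M \oplus N$, with $M$ the Green correspondent (indecomposable in $B$ of vertex $Q$) and $N$ having all indecomposable summands of vertex in $\FX$. Block decomposition then yields $(1 - e_B)\Ind_H^G(M') = (1 - e_B)N$, a direct summand of $N$, hence $\FX$-projective. For $b$-modules of vertex strictly smaller than $Q$, one uses that a summand lying outside $B$ must, via Brauer correspondence, have vertex inside a proper intersection of conjugates of $Q$, that is, inside $\FX$. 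Since $e_B \in Z(kG)$ commutes with every boundary map and chain map, these module-level decompositions assemble into an honest decomposition of the complex $\Ind_H^G(Y)$ as the direct sum of its $B$-block part and an $\FX$-projective complement. The symmetric argument for $\Res_H^G$ applied to $X \in \bC(B)$, with $\FY$ replacing $\FX$, yields the corresponding decomposition for $f$: namely $(1 - e_b)\Res_H^G(X)$ is $\FY$-projective.

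With these decompositions in hand, the embeddings in the statement are realized by the explicit formulas $f(X) \equiv e_b\Res_H^G(X)$ in $\bC(b)/\FY\thyph\Proj(\bC(b))$ and $g(Y) \equiv e_B\Ind_H^G(Y)$ in $\bC(B)/\FX\thyph\Proj(\bC(B))$. The functors are well defined on morphisms because chain maps between $B$-complexes restrict to chain maps between $b$-components modulo $\FY$-projective factors, and dually for induction. The resulting functors are inclusions of subcategories in the sense of $\hookrightarrow$ as an immediate consequence: they are the composition of the equivalences from Theorem~\ref{thm:greencomplex} with the canonical quotient from the $\FX$-localization on the $H$-side to the coarser $\FY$-localization (and identity on the $G$-side), pre-composed with the inclusion of the block subcategory.

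The main obstacle is rigorously establishing the $\FX$-projectivity of $(1 - e_B)\Ind_H^G(Y)$ for an arbitrary $Y \in \bC(b)$, not merely when $Y$ consists of modules of vertex $Q$. The cleanest conceptual route is through the Brauer homomorphism $\mathrm{Br}_Q: (kG)^Q \to kC_G(Q)$ and its defining property that $\mathrm{Br}_Q(e_B) = e_b$ modulo images of relative traces from proper subgroups of $Q$, i.e.\ modulo an $\FX$-projective ideal in the sense of relative projectivity. Translating this identity through Frobenius reciprocity (Theorem~\ref{thm:std-reptheory}(b)) and the adjunction between $\Ind_H^G$ and $\Res_H^G$ yields, after bookkeeping that uses the Mackey formula to identify proper intersections of conjugates of $Q$ with elements of $\FX$, precisely the required $\FX$-projectivity claim for the whole complex $(1 - e_B)\Ind_H^G(Y)$.
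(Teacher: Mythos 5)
The paper's proof is a one-line citation to Theorem~2.7 of Harris \cite{harris}, together with the observation that Harris's argument, stated for finitely generated modules, goes through unchanged for complexes and homotopy classes. Your proposal amounts to giving the missing argument in full rather than citing it, which is a legitimate alternative route, and you correctly identify the one observation that makes the passage from modules to complexes essentially automatic: the block idempotents $e_B$ and $e_b$ are central, so they commute with all boundary maps and chain maps, and the module-level block decompositions assemble degreewise into a decomposition of the complex $\Ind_H^G(Y)$ (and dually for $\Res_H^G$). However, the module-level input you invoke in the second paragraph is not as elementary as you present it: the assertion that for a $b$-module of vertex strictly smaller than $Q$ the non-$B$ summands of its induction have vertex in $\FX$ is exactly the content of Harris's Theorem~2.7 (and not something that falls out of the usual Green correspondence, which only controls the summand of the same vertex). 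You do recognize this gap in your last paragraph and correctly propose to close it via the Brauer homomorphism $\mathrm{Br}_Q$ and the relation between $e_B$ and $e_b$ modulo relative traces from proper subgroups of $Q$; this is indeed the standard proof of the Harris/Nagao-type block refinement and, combined with Frobenius reciprocity and the Mackey formula, yields the required $\FX$-projectivity of $(1-e_B)\Ind_H^G(Y)$. So your approach is correct in outline, but the second paragraph should be rewritten to make clear that it is not an independent argument: it is the claim you then go on to justify via $\mathrm{Br}_Q$ in the final paragraph. In summary, where the paper buys brevity by citing Harris, you buy self-containment; the mathematical content is the same.
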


\begin{proof} It follows from Theorem 2.7 in \cite{harris}. The proof in 
\cite{harris} is only for finitely generated modules, 
but it holds equally well for 
complexes and homotopy classes of complexes.  
\end{proof}

It is worth noting the following. Its proof follows from Theorem 
\ref{thm:std-maps}(e), 
after recalling that any object in $\bC(b)$ is $\FD$-projective. 

\begin{lemma} \label{lem:blockrel}
With the above notation, we have that 
\[ 
\frac{\bC(b)}{\FY\thyph\Proj(\bC(b))} = \frac{\bC(b)}{\FX\thyph\Proj(\bC(b))}.
\]
That is, a map between objects in $\bC(b)$ factors through an $\FY$-projective
object if and only it factors through an $\FX$-projective object. 
\end{lemma}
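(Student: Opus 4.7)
The plan is to prove the two containments of morphism ideals separately. To show that every $\FX$-projective map is $\FY$-projective, I would first observe that each subgroup in $\FX$ is contained in one in $\FY$: for any $\sigma \in G \setminus H$, we have $Q \cap Q^\sigma \subseteq Q \subseteq H$, so $Q \cap Q^\sigma \subseteq H \cap Q^\sigma \in \FY$. By transitivity of induction, an object induced from a subgroup $R$ is also induced (via an intermediate step) from any overgroup $S$ of $R$; hence an $R$-projective object is $S$-projective, and so $\FX$-$\Proj(\bC(b)) \subseteq \FY$-$\Proj(\bC(b))$. This gives one direction immediately: a map through an $\FX$-projective object factors through an $\FY$-projective object.

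For the reverse direction, I would exploit that $\bC(b)$ is contained in the defect-group projective objects. Fix a map $f: X \to Y$ in $\bC(b)$ and suppose $f$ is $\FY$-projective. Since the defect group of the Brauer correspondent $b$ is again $Q$, the object $Y$ is $Q$-projective in $\bC_H$, so by Theorem~\ref{thm:std-maps}(a) there exists $\mu$ with $\Id_Y = \Tr_Q^H(\mu)$. By Theorem~\ref{thm:std-maps}(b), this means $\Id_Y$ is $\{Q\}$-projective as a morphism. Writing $f = \Id_Y \circ f$ as a composite of a $\{Q\}$-projective map with an $\FY$-projective map, Theorem~\ref{thm:std-maps}(e) shows that $f$ is $\CF$-projective for
\[
\CF \;=\; \{\, Q \cap R \mid R \in \FY \,\} \;=\; \{\, Q \cap H \cap Q^\sigma \mid \sigma \in G \setminus H \,\}.
\]
Since $Q \subseteq H$, we have $Q \cap H \cap Q^\sigma = Q \cap Q^\sigma$, so $\CF = \FX$ and $f$ is $\FX$-projective.

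The only mild subtlety, which I would flag as the main thing to verify rather than a genuine obstacle, is that the assertion "every object of $\bC(b)$ is $Q$-projective" must be read in the ambient category $\bC_H$ (not inside $\bC(b)$), so that Theorem~\ref{thm:std-maps}(a) applies as stated. This is fine because the defect group $Q$ of $b$ lies in $H$ and the standard fact that every module in $b$ (and hence every complex termwise in $b$) is $Q$-projective carries over to complexes and to the homotopy categories considered, by the same arguments used throughout Section~\ref{sec:relprojtheory}. Combining the two directions, the morphism ideals $\FX$-$\Proj$ and $\FY$-$\Proj$ of $\bC(b)$ coincide, and the quotient categories are equal.
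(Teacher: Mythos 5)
Your argument is correct and is essentially the paper's intended proof, expanded from the paper's one-line sketch ("Its proof follows from Theorem \ref{thm:std-maps}(e), after recalling that any object in $\bC(b)$ is $\FD$-projective"). You correctly supply the easy containment $\FX\subseteq\FY$ for one direction, and for the other you derive the $\{Q\}$-projectivity of $\Id_Y$ from parts (a) and (b), compose with (e), and compute $Q\cap(H\cap Q^\sigma)=Q\cap Q^\sigma$ — exactly the route the authors had in mind, including the (correct) observation that $Q$-projectivity of objects in $\bC(b)$ is read in the ambient category $\bC_H$.
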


The main theorem of the section is the following.

\begin{theorem} \label{thm:homotop-triang}
Use Notation \ref{nota:blocks}. Suppose that $B$ is a block of $kG$. 
Suppose that $\bC$ is a category of complexes or a homotopy category
as in Theorem \ref{thm:greencomplex}. Then the
equivalences
\[
\begin{Large}
\xymatrix{
\frac{\bC(b)}{\FX\thyph\Proj(\bC(b))} \ar@<.5ex>[rr]^{\CI} &&
\frac{\bC(B)}{\FX\thyph\Proj(\bC(B))} \ar@<.5ex>[ll]^{\CR}
}
\end{Large}
\]
are equivalences of triangulated categories.
\end{theorem}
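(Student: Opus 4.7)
The plan is to equip both quotient categories with a natural triangulated structure and then to verify that the equivalences $\CI$ and $\CR$ constructed in Theorem \ref{thm:greencomplex} commute with the shift functor and take distinguished triangles to distinguished triangles. The equivalence of the underlying additive categories has already been established, so the work remaining is purely about the triangulated enrichment.

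The first step is to identify the triangulated structure on $\bC(B)/\FX\thyph\Proj(\bC(B))$ (and similarly on the $b$-side). For this I would invoke Lemma \ref{lem:simplequocats} together with its obvious analogues for the other homotopy categories of Section \ref{sec:relproj}: if $\boK$ is any of $\CK(\bC_G)$, $\CK_{\Vsplt}(\bC_G)$, $\CK_{V\thyph\ts}(\bC_G)$, or $\CK_{\ts+\Vsplt}(\bC_G)$, then quotienting by $\FX\thyph\Proj(\boK)$ yields a homotopy category of the same type but with an enlarged class of split sequences (now requiring splitting also on restriction to each $P\in\FX$). By Propositions \ref{prop:Vsplitproj}--\ref{prop:V+Tsplitproj}, this is the stable category of a Frobenius exact category, and hence carries a canonical triangulated structure; restricting to the block $B$ cuts out a Frobenius subcategory whose stable category is exactly $\bC(B)/\FX\thyph\Proj(\bC(B))$.

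The second step is to show that the induction and restriction functors are triangulated on the level of the relevant homotopy categories. The shift functor is defined term-wise, and both $\Ind_H^G$ and $\Res_H^G$ are additive, exact, and applied term-wise to complexes, so they commute with the shift and with the pushout construction used in Section \ref{sec:basiccats} to form the connecting morphism of a triangle. By the Mackey Theorem (Theorem \ref{thm:std-reptheory}(a)) and Frobenius reciprocity, induction sends $\FY$-split sequences on $H$ to $\FX$-split sequences on $G$, and restriction sends $\FX$-split sequences on $G$ to $\FY$-split ones on $H$. Hence these functors preserve the class of sequences that define distinguished triangles in both Frobenius stable categories, and they descend to triangulated functors on the quotients by $\FX$-projectives.

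The main obstacle will be that, strictly speaking, $\CR$ is not $\Res_H^G$ itself but the composition $V\circ\widehat{R}$ from Corollary \ref{cor:functor}, and the formal splitting functor $V$ of Lemma \ref{lem:gr1} is not a priori triangulated. Here the block-theoretic setup of Notation \ref{nota:blocks} rescues the situation: Lemma \ref{lem:blockrel} tells us that $\FX$- and $\FY$-projectivity coincide on $\bC(b)$, so $\widehat{R}$ is forced to land directly inside $\bC(b)$ without any auxiliary $\FY$-summand, and $V$ therefore acts as the identity on its image. Consequently $\CR$ agrees, on objects and morphisms in the quotient, with $\Res_H^G$ followed by the block idempotent projection onto $b$, which is manifestly triangulated. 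The adjunction units $\varepsilon$ and $\eta$ then furnish the required natural isomorphisms $\CR\CI\cong\Id$ and $\CI\CR\cong\Id$ as isomorphisms of triangulated functors, completing the argument.
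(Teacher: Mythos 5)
Your approach is essentially the same as the paper's: identify the triangulated structure on the quotients via Lemma~\ref{lem:simplequocats} and the Frobenius constructions of Section~\ref{sec:relproj}, and then verify that the correspondence functors carry the triangle-defining class of sequences to the triangle-defining class on the other side. Your third step, which makes explicit that $\CR$ is not literally $\Res_H^G$ but $V\circ\widehat{R}$ and resolves this with Proposition~\ref{prop:blockimages} (Harris) and Lemma~\ref{lem:blockrel}, is a point the paper elides, and it is a genuine and welcome clarification.

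One intermediate claim in your second step is not right as stated. You assert that restriction sends $\FX$-split sequences on $G$ to $\FY$-split sequences on $H$, citing Mackey and Frobenius reciprocity. That implication is false in general: the subgroups in $\FY$ contain those in $\FX$, so $\FY$-splitness is a strictly stronger condition, and nothing in Mackey forces a restricted $\FX$-split sequence to split over the larger $\FY$-subgroups. What is actually true, and all you need, is the trivially verified statement that restriction sends $\FX$-split sequences on $G$ to $\FX$-split sequences on $H$, since each $P\in\FX$ is already a subgroup of $H$ and $(\Res_H^G E)_{\downarrow P}=E_{\downarrow P}$. The target quotient is by $\FX$-projectives, so $\FX$-splitness is precisely what must be preserved. (If one insists on passing through $\FY$-splitness, the correct justification is Lemma~\ref{lem:blockrel}: after projecting to block $b$, $\FX$- and $\FY$-projectivity of maps coincide, so for sequences of complexes in $\bC(b)$ the two notions of relative splitness agree; but that requires the block hypothesis, not just Mackey.) With this single correction your argument is sound and matches the paper's proof, which constructs the triangle concretely via the pushout and checks directly that each of the relevant splitting conditions is preserved under restriction.
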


\begin{proof}
Let $V = \sum_{P \in \FX} k_P^{\uparrow G}$. The object of the
proof is to show that given a triangle in the domain category, its
image is a triangle in the target category. To this suppose that
$\sigma:X^* \to Y^*$ is a map of complexes in $\bC(B)$. That is, it is a
complex in $\bC_G$ whose terms are all in the block $B$. We view the
quotient of the homotopy category as in Theorem \ref{lem:simplequocats}.
We construct the triangle of $\sigma$ by constructing the diagram:
\begin{equation} \label{eq:triangle}
\xymatrix{
0 \ar[r] & X^* \ar[r] \ar[d]^\sigma & I^* \ar[r] \ar[d]&
\Omega_S^{-1}(X^*) \ar[r] \ar@{=}[d] & 0 \\
0 \ar[r] & Y^* \ar[r]               & E^* \ar[r]       &
\Omega_S^{-1}(X^*) \ar[r]            & 0
}
\end{equation}
Here, the upper row is the first step in relative injective resolution of
$X^*$, and the complex $E^*$ is the pushout of the upper left corner.
In this particular case, it means that $I^*$ is the direct sum of a
split complex and a complex of $V$-projective modules, and the upper row
is term split and split on restriction to every subgroup in $\FX$.

Now consider the effect of the restriction functor $f$ on the diagram.
The restriction of an $\FX$-projective complex
is $\FY$-projective. The restriction of a split complex remains split,
the restriction of the upper row remains term-split. Finally, the
upper row splits on restriction to any subgroup of $\FY$. Thus, the
restriction to $H$ of the triangle
\[
\xymatrix{
X^* \ar[r] & Y^* \ar[r] & B^* \ar[r] & \Omega_S^{-1}(X^*)
}
\]
is again a triangle.
\end{proof}

To extend the theorem to the derived category, we need the next lemma.

\begin{theorem} \label{thm:deriv-tri-equiv} Let $\CD_G$, $\CD_H$ be 
as in Theorem \ref{thm:greenderived}
Suppose that $\CD(B)$ is the full subcategory of $\CD_G$ consisting 
of those classes of complexes in the block $B$. Let $\CD(b)$ be the 
same for the Brauer correspondent of $B$. Then we have equivalences 
of triangulated categories
\[
\begin{Large}
\xymatrix{
\frac{\CD(B)}{\FX\thyph\Proj(\CD(B)} \ar@<.5ex>[rr]^{\CI} &&
\frac{\CD(b)}{\FX\thyph\Proj(\CD(b)})  \ar@<.5ex>[ll]^{\CR}
}
\end{Large}
\]
\end{theorem}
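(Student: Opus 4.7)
The plan is to adapt the strategy of Theorem \ref{thm:homotop-triang} to the derived setting. By Theorem \ref{thm:greenderived}, we already possess the equivalence $\CI$ and $\CR$ as additive categories between $\CD(B)/\FX\thyph\Proj(\CD(B))$ and $\CD(b)/\FX\thyph\Proj(\CD(b))$, once we restrict the Green correspondence to the blocks $B$ and $b$ using Proposition \ref{prop:blockimages} and identify $\FY$-projectives with $\FX$-projectives on the $b$-side via Lemma \ref{lem:blockrel}. What remains is to verify that these functors preserve triangles.

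First, I would identify the triangulated structure on each quotient. As in the proof of Theorem \ref{thm:homotop-triang}, the $\FX$-projective objects are simultaneously $\FX$-injective by Theorem \ref{thm:tensor}, so the quotient inherits a Frobenius-style stable structure whose triangles arise from pushout diagrams
\[
\xymatrix{
0 \ar[r] & X \ar[r] \ar[d]^\sigma & I \ar[r] \ar[d] &
\Omega_S^{-1}(X) \ar[r] \ar@{=}[d] & 0 \\
0 \ar[r] & Y \ar[r]               & E \ar[r]       &
\Omega_S^{-1}(X) \ar[r]            & 0
}
\]
exactly as in the display \eqref{eq:triangle}, where the top row is a term-plus-$V$-split exact sequence furnishing a relative injective hull of $X$, with $V = \sum_{P \in \FX} k_P^{\uparrow G}$. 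Second, I would show that $\CI$ and $\CR$ send such diagrams to diagrams of the same shape. Both induction and restriction are exact on the underlying module categories, hence preserve exactness and pushouts. They preserve term-split sequences trivially, preserve $V$- and $V_{\downarrow H}$-split sequences by Frobenius reciprocity (Theorem \ref{thm:std-reptheory}(b)), and they carry $V$-projective complexes to $V_{\downarrow H}$-projective ones and vice versa by the same reciprocity. Moreover, both functors take acyclic complexes to acyclic complexes, so they descend to the derived categories and send quasi-isomorphisms to quasi-isomorphisms. Consequently the image of the pushout diagram above is again a pushout diagram of the required type on the other side, and hence its induced triangle in the target category is a genuine triangle.

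The principal obstacle I anticipate is the compatibility between the Frobenius stable construction of triangles and the Verdier localization at quasi-isomorphisms implicit in passing from $\bC$ to $\CD$. Concretely, one must check that the relative injective hull of $X$ in $\bC(B)$ remains an $\FX$-injective hull in $\CD(B)$, so that the pushout construction in the quotient category $\CD(B)/\FX\thyph\Proj(\CD(B))$ really does yield the distinguished triangles of that category. Equivalently, one must verify that $\FX\thyph\Proj(\CD(B))$ is a thick subcategory of $\CD(B)$ and that the Verdier-quotient triangulated structure agrees with the Frobenius one. Once this is in place, the argument of Theorem \ref{thm:homotop-triang}, namely tracking the effect of $\CR$ (or $\CI$) term-by-term on the two rows of \eqref{eq:triangle}, applies verbatim, since every piece of the diagram — term splittings, $V$-splittings, and relative projective terms — is preserved on the nose by the functors.
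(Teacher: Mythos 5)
Your proposal and the paper diverge on the overall strategy, and your own version leaves the decisive step open. The paper's proof is essentially one line: it observes that the distinguished triangles of the derived category (and hence of its quotient by the $\FX$-projectives) are exactly the images of the distinguished triangles of the corresponding homotopy category, and then simply cites Theorem \ref{thm:homotop-triang} together with Theorem \ref{thm:greenderived}. No new construction of triangles in $\CD(B)$ is attempted; the triangulated structure on $\CD(B)/\FX\thyph\Proj(\CD(B))$ is taken to be inherited through the localization $\CK(B)\to\CD(B)$, so the fact that $\CI$ and $\CR$ preserve triangles is literally the content already proved at the homotopy level.

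Your proposal instead tries to rebuild the Frobenius-style pushout construction directly inside $\CD(B)$, and you yourself flag the resulting difficulty: whether the relative $\FX$-injective hull computed in $\bC(B)$ still furnishes the distinguished triangles after inverting quasi-isomorphisms, equivalently whether the Frobenius-style structure on the quotient of $\CD(B)$ agrees with a Verdier-quotient structure. That concern is real --- $\CD(B)$ is not an exact category in the sense used for the Frobenius construction in Section \ref{sec:basiccats}, and $\FX\thyph\Proj(\CD(B))$ has no obvious reason to be thick (relative projectives need not be closed under cones) --- but you do not resolve it; the sentence ``once this is in place, the argument applies verbatim'' is precisely where the proof is missing. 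The paper's route, by contrast, never needs the Frobenius machinery to work \emph{inside} the derived category, because the triangles there are by construction the images of the homotopy-category triangles, so the verification of Theorem \ref{thm:homotop-triang} is already the verification needed here. To repair your argument, replace the attempted re-derivation with the observation that the localization functor $\CK(B)/\FX\thyph\Proj(\CK(B))\to\CD(B)/\FX\thyph\Proj(\CD(B))$ takes distinguished triangles to distinguished triangles and commutes with $\CI$, $\CR$ (since induction and restriction preserve acyclicity and hence descend to the localizations), and then quote Theorem \ref{thm:homotop-triang}.
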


\begin{proof}
The triangles in the derived category are the same as those in the 
homotopy category. Consequently the theorem is proved by application
of Theorems \ref{thm:greenderived} and \ref{thm:homotop-triang}. 
\end{proof}

Suppose that $S$ is a Sylow $p$-subgroup of $G$.
Let $\FX = \{S \cap S^x \vert x \in G \setminus N_G(S)\}$.
This is the collection of nontrivial Sylow intersections. A theorem of
J. A. Green says that the defect group of any block is a Sylow
intersection. Consequently, the defect group of any block is either
equal to $S$ or is in $\FX$. This information allows the following
observation.

\begin{theorem} \label{thm:tensor-equiv}
Let $\bC$ be one of the categories of complexes or homotopy classes
of complexes that as in 
Theorem \ref{thm:greencomplex} or a derived category as in 
Theorem \ref{thm:greenderived}. Assume that $\bC_G$ is a tensor category.
Let $H = N_G(S)$.
Then the equivalences
\[
\begin{Large}
\xymatrix{
\frac{\bC_H}{\FX\thyph\Proj(\bC_H)} \ar@<.5ex>[rr]^{\CI}  &&
\frac{\bC_G}{\FX\thyph\Proj(\bC_G)} \ar@<.5ex>[ll]^{\CR}
}
\end{Large}
\]
are equivalences of tensor triangulated triangles.
\end{theorem}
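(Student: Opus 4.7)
The plan is to build on Theorems \ref{thm:greencomplex}, \ref{thm:homotop-triang}, and \ref{thm:deriv-tri-equiv}, which already furnish a triangulated equivalence in this setting, and to upgrade it to a tensor triangulated equivalence by constructing the requisite monoidal structure on the induction functor.

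First, I will reduce the statement to the setup of Theorem \ref{thm:greencomplex} with $\FP = \{S\}$. Since $S$ is a Sylow $p$-subgroup of $H = N_G(S)$ and $[H:S]$ is invertible in $k$, every object of $\bC_H$ is $S$-projective, so $\{S\}\thyph\Proj(\bC_H) = \bC_H$. On the $G$-side, the theorem of J.\,A.~Green quoted in the introduction to this section---that every defect group is a Sylow intersection---together with Theorem \ref{thm:std-maps}(f), shows that each complex in $\bC_G$ decomposes along blocks into a sum of a $\{S\}$-projective part (block summands of defect $S$) and an $\FX$-projective part (block summands of defect lying in $\FX$). Hence $\bC_G/\FX\thyph\Proj(\bC_G) = \{S\}\thyph\Proj(\bC_G)/\FX\thyph\Proj(\bC_G)$, so the previous theorems deliver the underlying triangulated equivalence. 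I will also observe that $\FX\thyph\Proj$ is a tensor ideal in each of $\bC_G$ and $\bC_H$, since relative projectivity is preserved under tensoring with arbitrary objects (see Section \ref{sec:relproj}), so the quotients are tensor triangulated categories and $\Res_H^G$ is strictly monoidal between them.

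The main step is to equip $\bI = \Ind_H^G$ with a natural monoidal isomorphism in the quotient. Combining Frobenius reciprocity (Theorem \ref{thm:std-reptheory}(b)) with the Mackey decomposition (Theorem \ref{thm:std-reptheory}(a),(d)) yields
\[
\bI(X) \otimes \bI(Y) \,\cong\, \Ind_H^G\bigl(X \otimes \Res_H^G\Ind_H^G(Y)\bigr) \,\cong\, \bI(X \otimes Y) \,\oplus\, \Ind_H^G\bigl(X \otimes F(Y)\bigr),
\]
where $F(Y) = \bigoplus_{t \in H\backslash G/H,\, t \notin H} \Ind_{H \cap H^t}^H\bigl(t \otimes \Res_{H^t \cap H}^H Y\bigr)$. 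The inclusion of the first summand provides a natural structure map $\bI(X \otimes Y) \to \bI(X) \otimes \bI(Y)$, and the pivotal point is that the correction term $\Ind_H^G(X \otimes F(Y))$ becomes zero in the quotient, i.e., is $\FX$-projective. I expect this to be the main obstacle. By transitivity of induction and a second use of Frobenius reciprocity, each summand of $\Ind_H^G(X \otimes F(Y))$ takes the form $\Ind_{H\cap H^t}^G(\,\cdot\,)$ with $t \notin H$, hence is $(H \cap H^t)$-projective. Let $Q$ be a Sylow $p$-subgroup of $H \cap H^t = N_G(S) \cap N_G(S^t)$. As a $p$-subgroup of $N_G(S)$, whose unique Sylow $p$-subgroup is $S$, we have $Q \subseteq S$; symmetrically $Q \subseteq S^t$, so $Q \subseteq S \cap S^t$. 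Since $t \notin N_G(S)$, the intersection $S \cap S^t$ is a proper Sylow intersection and thus lies in $\FX$. Because $[H \cap H^t : Q]$ is invertible in $k$, a standard relative trace argument upgrades $(H \cap H^t)$-projectivity first to $Q$-projectivity, then to $(S \cap S^t)$-projectivity, and finally to $\FX$-projectivity.

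Finally, preservation of the tensor unit is immediate from the construction of $\bR$: the restriction of the trivial $kG$-module is the trivial $kH$-module, which already lies in $\FP\thyph\Proj(\bC_H) = \bC_H$, so the correction summand in Corollary \ref{cor:functor} is zero and $\bR(k_G) = k_H$ on the nose. The equivalence then forces $\bI(k_H) \cong k_G$ in the quotient. Associativity and unit coherence for the monoidal structure on $\bI$ reduce to the naturality and associativity of Frobenius reciprocity, and compatibility with the shift functor is automatic since both induction and tensor product commute with the shift at the level of complexes. Together these verifications promote the triangulated equivalence of Theorems \ref{thm:homotop-triang} and \ref{thm:deriv-tri-equiv} to a tensor triangulated equivalence, as required.
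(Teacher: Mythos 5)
Your proof is correct, but it follows a noticeably different route than the paper's. The paper's proof of Theorem~\ref{thm:tensor-equiv}, after the block decomposition and the observation that $\FX\thyph\Proj$ is a tensor ideal, finishes with the one-line remark that ``the restriction map commutes with tensor products.'' In the setting at hand (with $\FP = \{S\}$, $H = N_G(S)$), the functor $\CR$ constructed in Theorem~\ref{thm:functorial} really is $\Res_H^G$ on the nose, because $\CL = \bC_H$ forces $\CL' = \bC_H$ and $\hatR = \Res_H^G$; since $\Res_H^G(X \otimes Y) = \Res_H^G(X) \otimes \Res_H^G(Y)$ strictly, one strictly monoidal quasi-inverse suffices to promote an adjoint equivalence to a tensor equivalence by general nonsense. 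You note this fact (``$\Res_H^G$ is strictly monoidal between them'') but then proceed to build the lax monoidal structure on $\bI = \Ind_H^G$ directly via Frobenius reciprocity and Mackey, and show that the Mackey correction term $\Ind_H^G(X \otimes F(Y))$ is annihilated by the quotient because a Sylow $p$-subgroup of $H \cap H^t$ sits inside $S \cap S^t \in \FX$. That calculation is correct (it is essentially the same group-theoretic lemma that makes Condition~(6) of Theorem~\ref{thm:functorial} work in the Green-correspondence setting, and it also explains why $\CY = \CX$ here), and it yields an explicit description of the monoidal structure on $\bI$ together with the unit and coherence checks that the paper does not spell out. What your approach buys is transparency and a self-contained construction; what the paper's approach buys is brevity, by delegating the existence of a compatible structure on $\bI$ to the abstract fact that a strong monoidal equivalence is automatic once one direction is strictly monoidal.
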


\begin{proof}  Let $B_1, \dots, B_t$ be the blocks of $kG$ that have
$S$ as defect group. Then notice that
\[
\frac{\bC_G}{\FP\thyph\Proj(\bC_G)} = \sum_{i = 1}^t
\frac{\bC(B_i)}{\FP\thyph\Proj(\bC(B_i))}
\]
since any module or complex in any block with smaller 
defect group is $\FP$-projective.
Thus by Theorem \ref{thm:homotop-triang}, these are triangle equivalences.
Hence, the only question here is the tensor structure.
These categories have tensor structures because the subcategories
being factored out are closed under arbitrary tensor product.
That is, for example, if $X^*$ is in
$\SE-Proj$ for $\SE = \ts$+$V$-$\seq$, then so is $X^* \otimes Y^*$
for any appropriate $Y^*$. So finally, the proof is the observation
that the restriction map commutes with tensor products.
\end{proof}

\begin{remark} 
As we previously noticed, there is no tensor product
of arbitrary objects in $\cpx(kG)$. 
Howeve, one should still be able to use the tensor structure
for the category $ \cpx(kG)$ in some constructive way.
\end{remark}

\begin{remark}
It might be tempting to use the above result to accomplish something such
as classifying thick subcategories or localizing subcategories. However,
such structures may be very complicated and it is likely that the Balmer
spectrum of thick subcategories is not Noetherian. See \cite{carlson}.
\end{remark}

\bibliographystyle{amsplain}

\end{document}